\newenvironment{rmq}[0]{\vspace{3mm}\noindent \textbf{Remark: }}{\vspace{3mm}}
\def\indi{\mbox{\hspace{0.2em}l\hspace{-0.55em}1}}
\newtheorem{thm}{Theorem}[section]
\newtheorem{prop}[thm]{Proposition}
\newtheorem{df}[thm]{Definition}
\newtheorem{lem}[thm]{Lemma}
\newcommand{\A}{\mathcal{A}}
\newcommand{\C}{\mathcal{C}}
\newcommand{\F}{\mathcal{F}}
\renewcommand{\O}{\mathcal{O}}
\renewcommand{\P}{\mathcal{P}}
\newcommand{\R}{\mathcal{R}}
\renewcommand{\S}{\mathcal{S}}
\newcommand{\T}{\mathcal{T}}
\newcommand{\V}{\mathcal{V}}
\newcommand{\X}{\mathcal{X}}
\newcommand{\IC}{\mathbb{C}}
\newcommand{\IN}{\mathbb{N}}
\newcommand{\IP}{\mathbb{P}}
\newcommand{\IR}{\mathbb{R}}
\newcommand{\IU}{\mathbb{U}}
\begin{document}

\title{Associative and commutative tree representations for Boolean functions\thanks{
This research was partially supported by the A.N.R. project {\em
  BOOLE}, 09BLAN0011, 
and by the P.H.C. Amadeus project {\em Probabilities and tree representations for Boolean functions}.
The Austrian authors' work has been supported by {\em FWF (Austrian Science Foundation),
National Research Area S9600, grant S9604} and {\em \"OAD, grant F03/2010}.}}

\author{\and Antoine Genitrini\thanks{Laboratoire LIP6,
CNRS UMR 7606 and Universit\'e Pierre et Marie Curie,
4 place Jussieu, 75252 Paris cedex~05, France.
Email: {antoine.genitrini@lip6.fr}.}
\and Bernhard Gittenberger\thanks{Technische Universit\"at Wien,
Wiedner Hauptstrasse 8-10/104, A-1040 Wien, Austria.
Email: {\{gittenberger, vkraus\}@dmg.tuwien.ac.at}.}
\and Veronika Kraus$^\ddag$
\and C\'ecile Mailler\thanks{Laboratoire de Math\'ematiques de Versailles,
CNRS UMR 8100 and Universit\'e de Versailles Saint-Quentin-en-Yvelines,
45 avenue des \'Etats-Unis, 78035 Versailles, France.
Email: {cecile.mailler@math.uvsq.fr}.}
}

\date{\today}

\maketitle{}

\begin{abstract}
Since the 90's, several authors have studied a probability distribution
on the set of Boolean functions on $n$ variables induced by some probability
distributions on formulas built upon the connectors $And$ and $Or$
and the literals $\{x_{1}, \bar{x}_{1}, \dots, x_{n}, \bar{x}_{n}\}$.
These formulas rely on plane binary labelled trees, known as Catalan trees.
We extend all the results, in particular the relation
between the probability and the complexity of a Boolean function, to other models of formulas:
non-binary or non-plane labelled trees (i.e. Polya trees). This
includes the natural tree class where 
associativity and commutativity of the connectors $And$ and $Or$ are
realised.
\end{abstract}

\section{Introduction.}

Consider the set $\F_n$ of Boolean functions on a set of $n$ variables.
There are $2^{2^n}$ such Boolean functions, as a value from $\{True, False\}$ can be assigned to every variable, which gives $2^n$ different assignments, for every assignment the function has output  $True$ or $False$. In this paper we consider And/Or trees, i.e. trees where internal nodes carry labels from the set $\{\land,\lor\}$ and external nodes (leaves) have labels from the set $\{x_1,\bar{x}_1,\ldots,x_n,\bar{x}_n\}$. Obviously, every such tree represents a function $f$ from $\F_n$.
We consider the uniform distribution on the set of And/Or trees of size $m$ (denoting by size the number of leaves) and are interested in the limiting probability $\IP_n(f)$ of a given function $f \in \F_n$ being computed by a random tree of size $m$, as $m$ tends to infinity, if it exists.\\

A lot of work has been going on in this field. Lefmann and Savick\'y \cite{LS97} were first to prove
the existence of the limiting probability of $f$. A survey including various numerical results was done
by Chauvin \textit{et al. }\cite{CFGG04} and Gardy~\cite{G06}. A similar study on implication trees, i.e.
Boolean trees where internal nodes carry implication labels ($\Rightarrow$) has been done by Fournier \textit{et al. }\cite{FGGG11}.\\
We base our paper on a recent work by Kozik~\cite{Kozik} on pattern languages. In his paper, Kozik proves a strong relation between the limiting probability of a given function~$f$ and its complexity~$L(f)$ (that is the minimal size of a tree computing the function $f$), asymptotically as the number of variables tends to infinity. We want to study the impact of removing step by step the restrictions on the trees, that is considering first plane, but non-binary or non-plane but binary And/Or trees, and later non-plane and non-binary trees. Considering such tree structures seems quite natural, as the new characteristics
correspond to adding the characteristics of associativity (non-binary) and commutativity (non-plane),
which are given for the $\land$ and $\lor$ operator on the level of Boolean logic.
Gardy~\cite{G06} presented some of these models and gave some numerical values of the probabilty distributions on $\F_{1}$.
In this paper we will use some recent method based on binary plane trees to obtain some results on
the probability distribution on $\F_{n}$, when $n$ is large.\\

Kozik has shown that the asymptotic order of $\IP_n(f)$ depends on $L(f)$ for binary plane trees (c.f. \cite{Kozik}). First, we compare the limiting probabilities of the constant function $True$ in the different models. 
Supported by numerical results for $n$ equal to 1 and 2, we conjectured that commutativity does not matter.
Surprisingly to us, we find that both characteristics have impact on the limiting probability $\IP_n(True)$.
To be more precise, even if the order of $\IP_n(True)$ when $n$ tends to infinity is the same in all models, the asymptotic leading coefficient differs from model to model. To get more insight, we further compare probabilities of functions of complexity~$1$, those are the literals $x,\bar{x}$, in a next step.
 
Finally, we prove that for all tree models compared, the asymptotically relevant fraction of trees computing a given function $f$ is given by the set of minimal trees of $f$ expanded once in a given way, and give bounds for the arising probability distribution. A similar result is proved in~\cite{Kozik} for plane binary And/Or trees and in a paper by Fournier \emph{et al. }~\cite{FGGG11} for implication trees.

\section{Associative and commutative trees: definitions, generating functions.}\label{theta}
Kozik~\cite{Kozik} has shown that in binary plane trees the order of magnitude of the limiting probability
of a given Boolean function is related to its complexity.
We generalise this result and therefore define the complexity of a function by the following:

\begin{df}
An \emph{And/Or tree} is a labelled tree, where each internal node is labelled with one of the connectors $\{\land, \lor\}$ and each leaf with one of the literals $\{x_1,\bar{x}_1\ldots,x_n,\bar{x}_n\}$.
We define the \emph{size} of an And/Or tree to be its number of leaves.
\end{df}

\begin{df}
The \emph{complexity} $L(f)$ of a non-constant function~$f$ (i.e. $f\notin \{True,False\}$)
is given by the size of a smallest And/Or tree computing~$f$ (in the rest of the paper such trees will be called \emph{minimal} for $f$), while we define the complexity of $True$ and $False$ to be $L(True)=L(False)=0$.
\end{df}
As it will be clear later, the complexity of a function does not depend on the chosen tree model.

\begin{df}
We are considering a set $\T_{m,n}$ of And/Or trees of size $m$. 
Let $\IU_{m,n}$ be the uniform distribution on $\T_{m,n}$, and $\IP_{m,n}$ its image on the set of Boolean functions.
We call \[\IP_n=\lim_{m\to \infty}\IP_{m,n}\] the limiting distribution.
\end{df}

\begin{rmq}\label{rmq:duality}
In all models we will take into consideration, the probability of a function~$f$ is equal to the one of its negation. In fact, a tree computing $f$ can be relabelled in the following way: each connector is substituted by the other one and each literal by its negation ($x\rightarrow \bar{x}$ and $\bar{x}\rightarrow x$). The new tree we obtain belongs to the same model as $t$ and computes the function $\bar{f}$.
\end{rmq}

At first, we will present the result proven by Kozik.
This result will be generalised in the forthcoming parts of the paper.

\subsection{The classical model.}\label{theta:class}

First, let us consider the set $\T$ of binary plane trees, whose internal nodes are labelled with $\land$ or~$\lor$, and whose external nodes
are labelled with literals chosen in $\{x_1,\bar{x}_1,\ldots,x_n,\bar{x}_n\}$: each such tree computes a Boolean function on $n$ variables.
We denote by $T(z)=\sum_{m\geq 0}T_mz^m$ the generating function enumerating this set of trees\footnote{More generally, in the rest of this paper,
a generating function and its coefficients will be denoted by the same capital letter $Z(z)$ for the generating function and $Z_m$ for its coefficients.},
and by $T_f(z)$ the generating function of such trees computing the Boolean function $f$. Let us remind some well known results about this generating function:

\begin{prop}\label{prop:t(z)}
Binary And/Or trees fulfil the symbolic equation
\begin{equation}\label{eq:symbolic}
\T=\X\ |\ \T \land \T \ |\ \T \lor \T,
\end{equation}
where $\X$ is a leaf. Thus the generating function $T(z)$ verifies $T(z)= 2nz + 2T(z)^2$, and therefore, we have:
\[T(z)=\frac{1-\sqrt{1-16nz}}{4}\]
and the singularity $\rho_n$ of $T(z)$ is $\frac{1}{16n}$.
\end{prop}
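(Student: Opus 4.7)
The proposition is essentially a textbook application of the symbolic method, so my plan is to follow the standard recipe and just verify that nothing subtle is hidden. I would proceed in four short steps.

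First, I would justify the symbolic equation by a root decomposition: a binary And/Or tree is either a single leaf, carrying one of the $2n$ literals in $\{x_1,\bar x_1,\ldots,x_n,\bar x_n\}$, or it has a root labelled $\land$ or $\lor$ with an ordered pair of And/Or subtrees. This decomposition is unambiguous because the root (leaf vs.\ internal) and, in the internal case, its label and the two ordered children are recoverable from the tree. Since the size is the number of leaves, a leaf has size~$1$ (so contributes $z$) and the internal construction is additive in sizes (so contributes $T(z)^2$). Summing over the $2n$ possible leaf labels and the two possible connectors gives the functional equation $T(z)=2nz+2T(z)^2$.

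Second, I would solve this quadratic in $T(z)$: rewriting it as $2T(z)^2-T(z)+2nz=0$ and applying the quadratic formula yields
\[
T(z)=\frac{1\pm\sqrt{1-16nz}}{4}.
\]
The correct branch is determined by the initial condition $T(0)=0$ (there is no tree of size~$0$), which forces the minus sign and gives $T(z)=\bigl(1-\sqrt{1-16nz}\bigr)/4$.

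Third, I would read off the singularity. The function $z\mapsto 1-16nz$ vanishes only at $z=1/(16n)$, and the square root has its branch point there; elsewhere on a slit disk $T(z)$ is analytic. Hence $\rho_n=1/(16n)$ is the unique dominant singularity.

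The only potential obstacle is a purely combinatorial one, namely the unambiguity of the decomposition; but once one specifies that trees are plane (ordered children) and that each leaf is labelled, this is immediate. No convergence or analytic continuation issues arise beyond what is visible from the closed form, so I would not expect any real difficulty.
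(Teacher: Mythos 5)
Your proposal is correct and follows exactly the standard symbolic-method derivation that the paper itself invokes implicitly (the paper states this proposition as a "well known result" without proof): root decomposition giving $T(z)=2nz+2T(z)^2$, branch selection via $T(0)=0$, and reading off the branch point of the square root. There is nothing to add or correct.
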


Let us consider the uniform distribution on the set of trees of size $m$ and then the probability distribution $\IP_{m,n}$ it induces on the set $\F_n$ of Boolean functions on $n$ variables. The limit of this distribution when $m$ tends to infinity,
denoted by $\IP_n$ has already been studied, in particular by Lefmann and Savick\'y \cite{LS97}, Chauvin~\emph{et al.}~\cite{CFGG04} and Kozik \cite{Kozik}, who has shown the following theorem. 

\begin{thm}\cite[Kozik]{Kozik}\label{catalan-theta}
Let $f$ be a Boolean function. Then
\[\IP_n(f)\sim \frac{\lambda_f}{n^{L(f)+1}}\text{ as }n\to \infty,\]
where $L(f)$ is the complexity of $f$, i.e. the size of a minimal tree computing $f$, and $\lambda_f$
is a constant depending on $f$, which will be specified later in this paper.  
\end{thm}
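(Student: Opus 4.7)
My plan is to perform singularity analysis on $T_f(z)$ and show it has the same type of dominant singularity as $T(z)$, with a leading coefficient of order $n^{-L(f)}$. Since $T(z)=(1-\sqrt{1-16nz})/4$ has a unique dominant square-root singularity at $\rho_n=1/(16n)$ with expansion $T(z)=\tfrac14-\tfrac14\sqrt{1-z/\rho_n}$, Flajolet--Sedgewick transfer theorems reduce the computation of $\IP_n(f)=\lim_m [z^m]T_f(z)/[z^m]T(z)$ to the ratio of the coefficients of $\sqrt{1-z/\rho_n}$ in the singular expansions of $T_f$ and $T$ at $\rho_n$.

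The combinatorial heart is a canonical decomposition of trees computing $f$ into a \emph{skeleton} isomorphic to some minimal tree $t^\ast$ of $f$ plus independent \emph{expansions} at the leaves of $t^\ast$. An expansion at a leaf $\ell$ labelled $x$ with an $\land$-parent is any subtree rooted at $\lor$ such that replacing $\ell$ by this subtree keeps the value computed at this position equal to $x$ whenever the relevant variables are assigned their forcing values; the symmetric definition applies when the parent is $\lor$. Writing $A(z)$ for the generating function of such expansions at a fixed leaf, the structural equation relating $A$ and $T$ (schematically $A(z)=z+2zT(z)A(z)+\ldots$) together with $T(\rho_n)=1/4$ forces $A$ to inherit a square-root singularity at $\rho_n$; moreover $A(\rho_n)$ is of order $1/n$, because the leaf contributes a factor $z\sim \rho_n=1/(16n)$.

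Assembling these ingredients, each minimal tree $t^\ast$ contributes $A(z)^{L(f)}$ to the expansion-based generating function, producing a singular term of order $n^{-L(f)}\sqrt{1-z/\rho_n}$. Summing over the finite set of minimal trees of $f$ and absorbing combinatorial constants (multiplicities, dual-tree symmetries as in Remark 2.1 on duality) into a single constant $\lambda_f$, one obtains $T_f(z)=c_f-\lambda_f\, n^{-L(f)}\sqrt{1-z/\rho_n}+o\bigl(\sqrt{1-z/\rho_n}\bigr)$ near $\rho_n$. Dividing by the corresponding singular coefficient $-1/4$ of $T(z)$ and applying transfer yields $\IP_n(f)\sim \lambda_f/n^{L(f)+1}$.

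The main obstacle is establishing this decomposition \emph{without loss}: one has to define an unambiguous reduction sending every tree computing $f$ to a pair (minimal tree, tuple of expansions), and show that the trees which do not fit this expansion template contribute a strictly lower-order singular term. This is the role of Kozik's pattern-language machinery, which sets up a rewriting system on And/Or trees whose normal forms are precisely the expanded minimal trees and bounds the non-normal-form contributions by comparison with $T(z)^k$ for $k>L(f)$, whose singular coefficients are dominated in the relevant regime. Once this bound is in place, the asymptotics above follow, and the constant $\lambda_f$ is exhibited in terms of the singular data of $A$ at $\rho_n$ and the combinatorics of minimal trees of $f$.
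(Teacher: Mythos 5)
Your high-level architecture (transfer theorems reduce $\IP_n(f)$ to the ratio of the $\sqrt{1-z/\rho_n}$-coefficients of $T_f$ and $T$, and that coefficient is controlled by a structure theorem identifying the dominant trees as expanded minimal trees, with the remainder handled by Kozik's pattern machinery) is the right one, and it is the one the paper follows -- the paper cites Kozik for this statement and re-derives the constant from the structure theorem in Proposition~\ref{lambda-catalan}. However, your decomposition is not the correct one. The structure theorem is \emph{not} ``a skeleton $t^\ast$ with an independent expansion at each leaf, contributing $A(z)^{L(f)}$''; it is: a minimal tree expanded \emph{once}, at a \emph{single} node $\nu$ which may be internal or external, by replacing $t_\nu$ with $t_\nu\diamond t_e$ where $t_e$ is either a simple tautology/contradiction (T-expansion, valid at \emph{every} node) or a tree of shape $x\lor\cdots$ or $x\land\cdots$ for a variable $x$ occurring in $t^\ast$ (X-expansion). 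Your per-leaf product misses all T-expansions at internal nodes: for instance $t^\ast\land(x_i\lor\bar x_i\lor s)$ computes $f$ and belongs to the dominant family, but is not obtained by substituting subtrees for leaves of $t^\ast$. Since every one of the $2L(f)-1$ nodes contributes to $\lambda_f$ (the paper counts $\lambda_T(f)=4(2L(f)-1)M_f$ T-expansions), your decomposition yields the wrong constant even where it yields the right order. The correct generating function of the dominant trees is a \emph{sum} over expansion positions, $M_f\,z^{L(f)}\sum_{\mathrm{pos}}E_{\mathrm{pos}}(z)$, not a product over leaves.

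There are two further genuine problems. First, your order bookkeeping is internally inconsistent: you assert that the singular term of $T_f$ is of order $n^{-L(f)}\sqrt{1-z/\rho_n}$ and then divide by the order-one singular coefficient of $T$ to conclude $\IP_n(f)\sim\lambda_f n^{-(L(f)+1)}$; these differ by a factor of $n$. The missing $1/n$ comes from the fact that the single nontrivial expansion subtree is not arbitrary but a simple tautology or a $g_x$-type tree, whose limiting ratio is itself $\Theta(1/n)$ (Theorem~\ref{catalan-constant} and the computation of $g_x$); combined with $\rho_n^{L(f)}=(16n)^{-L(f)}$ from the fixed skeleton this gives $n^{-(L(f)+1)}$. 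Second, the negligibility of trees outside the template -- the entire content of the theorem -- cannot be obtained by ``comparison with $T(z)^k$ for $k>L(f)$'': the singular coefficient of $T(z)^k$ is $\Theta(1)$ in $n$, so no saving in $n$ comes from powers of $T$. The actual mechanism is Theorem~\ref{thm:Jakub}: one counts $L$-restrictions (repetitions and occurrences of essential variables among pattern leaves of an iterated pattern), shows by a labelling count that each restriction costs a factor $1/n$, observes that a tree computing $f$ must have at least $L(f)+1$ restrictions while those with at least $L(f)+2$ have ratio $O(n^{-(L(f)+2)})$, and then proves combinatorially that the trees with exactly $L(f)+1$ restrictions are precisely the expanded minimal trees. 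Without this input your argument reduces to the citation itself. Note finally the paper's own caveat: Kozik rigorously establishes $\IP_n(f)=\Theta(n^{-(L(f)+1)})$, and the existence and value of $\lambda_f$ is what Section~\ref{sec:gen-constants} bounds.
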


\begin{rmq}
 Note that in \cite{Kozik}, the result $\IP_n(f)=\Theta\left(\frac{1}{n^{L(f)+1}}\right)$ is rigorously proven for the binary plane model, while the actual existence of the constant is suggested. In Section~\ref{sec:gen-constants} we give bounds for the constant $\lambda_f$. 
\end{rmq}

\begin{df}\label{def:essential}
A variable $x$ is essential for a function $f=f(x,x_1,\ldots,x_{n-1})$ if there exists an assignment of $True$ or $False$ to the variables $x_1,\ldots,x_{n-1}$, which we denote by $\underline{x}_0$, such that $f(True,\underline{x}_0)\neq f(False,\underline{x}_0)$. 
\end{df}

\begin{rmq}
An essential variable of $f$ appears in every tree representation of $f$.
\end{rmq}

\begin{rmq}
Note that in this theorem, $f$ (and thus $L(f)$) is fixed, and $n$ tends to infinity.
The set of essential variables of the function is finite (and does not depend on $n$).
\end{rmq}

First of all, let us define associative trees, commutative trees and then associative and commutative trees,
and the induced distributions on the set of Boolean functions $\F_n$.

\subsection{The associative plane model.}

\begin{df}
An \emph{associative} tree is a plane tree where each node has out-degree chosen in $\IN \setminus \{1\}$.
A \emph{labelled associative} tree is an associative tree in which each external node has a label in $\{x_1,\bar{x}_1,\ldots,x_n,\bar{x}_n\}$
and each internal node has an $\land$-label or an $\lor$-label but cannot have the same label as its father.
We denote by $\A$ the family of associative trees and by $\A_{m}$ the set of such trees of size $m$.
\end{df}
Hence these trees are \emph{stratified}: the root can be labelled either by $\land$ or $\lor$ and it determines the labels of all other internal nodes.

We denote by $\IP_n^a=\lim_{m\to \infty}\IP^a_{m,n}$ the limiting distribution of Boolean functions induced by associative And/Or trees.
Our aim is to compare the limiting distributions $\IP^a_n$ and $\IP_n$.

The generating function enumerating associative trees is given by $A(z)=\hat{A}(z)+\check{A}(z)-2nz$,
where $\hat{A}$ (resp. $\check{A}$) is the generating function of associative trees rooted
at an $\land$-node (resp. an $\lor$-node) or is a single leaf. Note that $\hat{A}(z)=\check{A}(z)$ and,
\[\hat{A}(z)=2nz+\sum_{k\geq 2}\check{A}(z)^k=2nz+\frac{\hat{A}^2(z)}{1-\hat{A}(z)}.\]
Therefore,
\begin{equation}\label{eq:assocGF}
A(z)=\frac{1}{2}\left(1-2nz-\sqrt{1-12nz+4 n^2 z^2}\right)
\end{equation}
and its dominant singularity is
\[\alpha_n=\frac{3-2\sqrt{2}}{2n}.\]
Moreover, $A(\alpha_n)=\sqrt{2}-1$.

\begin{rmq}
Thanks to the Drmota-Lalley-Woods theorem (well presented in~\cite[Chapter 8]{FlSe}),
we can show that $P^a_{m,n}$ has indeed a limit when $m$ tends to infinity. We denote by $\hat{A}_f(z)$ (resp.~$\check{A}_f(z)$) the generating function enumerating associative trees computing~$f$, whose roots are labelled by $\land$ (resp.~$\lor$) or a literal. These generating functions satisfy the following system:
\[\left \{
\begin{array}{c}
\displaystyle{\hat{A}_f(z) = z\indi_{\{f \text{ lit.}\}}+\sum_{i=2}^{\infty}
\sum_{\substack{g_1, \dots, g_i, \\ g_1\land\dots\land g_i=f}}\check{A}_{g_1}(z)\cdots\check{A}_{g_i}(z)}\\
\displaystyle{\check{A}_f(z) = z\indi_{\{f \text{ lit.}\}}+\sum_{i=2}^{\infty}
\sum_{\substack{g_1, \dots, g_i, \\ g_1\lor\dots\lor g_i=f}}\hat{A}_{g_1}(z)\cdots\hat{A}_{g_i}(z).}
\end{array}
\right .\]
The Drmota-Lalley-Woods theorem says, roughly speaking, that generating functions satisfying a
system of functional equation have a dominant singularity of the same type. By transfer theorems
(see \cite{FlOd}) this implies similar behaviour of their coefficients and eventually the
existence of a limiting distribution. For a similar system of functional equations it was shown in
\cite[Section~3]{FGGG11} that all assumptions of the Drmota-Lalley-Woods theorem indeed hold. 
\end{rmq}

\begin{figure}[htb]
\centering
\fbox{\includegraphics[width=0.8\textwidth]{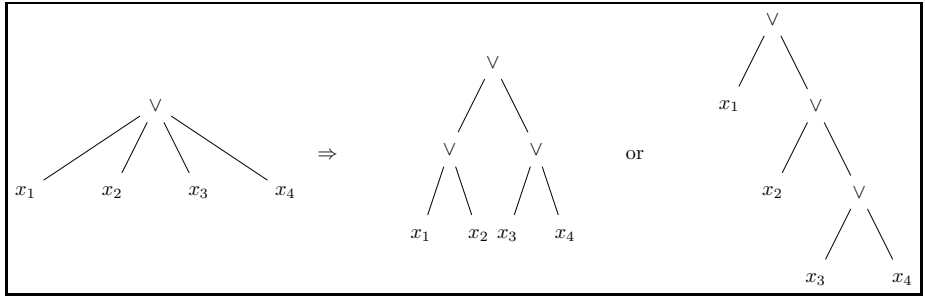}}
\caption{Two of the five possible binary trees obtained from the  associative tree.}
\label{fig:asstobinary}
\end{figure}

\subsection{The commutative binary model.}

\begin{df}
A \emph{labelled commutative} tree on $n$ variables is a non-plane binary tree where every internal node is labelled with one of the connectors $\{\land,\lor\}$ and every leaf is labelled by a literal $\{x_i,\bar{x}_i,i=1,\ldots,n\}$. We denote this family of trees by $\C$ .
\end{df}

We consider the distribution $\IP^c_{m,n}$ induced over the set of Boolean functions of
$n$~variables by the uniform distribution over such trees of size $m$.

Binary commutative trees fulfil the same symbolic equation as in the plane case (c.f. \eqref{eq:symbolic}) but because of commutativity,
the generating function of all commmutative trees on $n$ variables, counting leaves, is given implicitly by
\begin{equation}\label{eq:C(z)}
 C(z)=2nz+C(z)^2+C(z^2),
\end{equation}
where the term $\frac{1}{2}(C(z)^{2}+C(z^2))$ tracks a possible symmetry if both subtrees of the root are identical.
See Gardy~\cite{G06} for details on this model of expressions and P\'olya and Read~\cite{PR87} for more general ideas.
The system of equations for the generating functions $C_f(z)$ computing a given Boolean function~$f$ is given by
\begin{equation*}
C_f(z)=z \indi_{\{f \textrm{ lit.}\}} + \frac{1}{2}\sum_{\substack{g,h \neq f \\ g\land h=f}}C_g(z)C_h(z) +
\frac{1}{2}\sum_{\substack{g,h \neq f \\ g\lor h=f}} C_g(z)C_h(z) + C_f(z)^2+C_f(z^2).
\end{equation*}
We can prove all assumptions of the Drmota-Lalley-Woods theorem, hence we conclude that all the $(C_f(z))$ and $C(z)$ have the same singularity $\gamma_n$, and therefore $\IP^c_{m,n}$ converges to a limiting probability distribution~$P^c_n$, when $m$ tends to infinity.

\subsection{The commutative associative model.}

\begin{df}
Finally we define \emph{general labelled} trees as commutative and associative trees,
with internal nodes labelled by $\land$ or $\lor$ (with the condition that father and sons cannot have the same label),
and external nodes labelled by literals chosen in $\{x_1,\bar{x}_1,\ldots,x_n,\bar{x}_n\}$.
We denote by $\P$ this family of trees.
\end{df}
As in the other models, we consider the distribution $\IP^{a,c}_{m,n}$ induced over the set of Boolean functions
by the uniform distribution over such trees of size $m$.

Let $P(z)=\sum_{m}P_{m}z^m$ be the generating function of general trees, and $\hat{P}(z)$ (resp. $\check{P}(z)$) the generating function of general trees rooted by $\land$ (or by $\lor$, resp.) or are a leaf. We have
\begin{equation}\label{eq:polya2}P(z)=\hat{P}(z)+\check{P}(z)-2nz,\end{equation}
with
\begin{align}\label{eq:polya1}
 \left \{
\begin{array}{c}
\displaystyle{\hat{P}(z) = \exp \left(\sum_{i\geq 1}\frac{\check{P}(z^i)}{i}\right)-1-\hat{P}(z)+2nz}\\
\displaystyle{\check{P}(z) = \exp \left(\sum_{i\geq 1}\frac{\hat{P}(z^i)}{i}\right)-1-\check{P}(z)+2nz.}
\end{array}
\right.
\end{align}
Moreover, the generating functions $\hat{P}_f(z)$ and $\check{P}_f(z)$ of general trees computing $f$ satisfy the following system:
\[\left\{
\begin{array}{c}
\displaystyle{\hat{P}_f(z)=z\indi_{\{f \text{ lit.}\}}+\sum_{l=2}^{\infty}\sum_{\substack{g_1, \dots g_i, \\ g_1\land\ldots\land g_l=f}}\
\prod_{j=1}^l \left(\exp\left(\sum_{i\geq 1} \frac{\check{P}_{g_j}(z^i)}{i}\right)-1\right)}\\
\displaystyle{\check{P}_f(z)=z\indi_{\{f \text{ lit.}\}}+\sum_{l=2}^{\infty}\sum_{\substack{g_1, \dots g_i, \\ g_1\land\ldots\land g_l=f}}\
\prod_{j=1}^l \left(\exp\left(\sum_{i\geq 1} \frac{\hat{P}_{g_j}(z^i)}{i}\right)-1\right).}
\end{array}
\right.\]
Thus, we can check the hypothesis of the Drmota-Lalley-Woods theorem and conclude that the limiting distribution
$\IP^{a,c}_n$ of $\IP^{a,c}_{m,n}$, when $m$ tends to infinity, exists, and moreover,
that all the $\hat{P}_f,\check{P}_f$, $\hat{P}$ and $\check{P}$ have
the same singularity, denoted by $\delta_n$.\\

In the next parts of the paper, we will show that Theorem~\ref{catalan-theta} still holds in the associative or commutative cases. 

First, we show in Section~\ref{sec:tautologies} that the limiting ratio of tautologies is of order $\frac1{n}$, 
we compute explicitly the limit of $\IP_n(True)$ when $n$ tends to infinity for the different models. If these limits were the same, we could not conclude anything, but in fact they are all different,
which permits us to conclude that asymptotically, when $n$ tends to infinity,
the probability distributions induced by the various models are all different.
In Section~\ref{sec:literals}, we extend our results to the limiting probabilities of functions which are literals. In all models, the asymptotic ratio is of order $\frac1{n^2}$ when $n$ tends to infinity,
but the limiting ratios are different from one model to the other.
Finally, we generalise Theorem~\ref{catalan-theta} in Section~\ref{sec:gen-constants}.

\section{Limiting ratio of tautologies.}\label{sec:tautologies}
In this section we compute the limiting probability of the constant function $True$.
We recall that trees computing the function $True$ are called \emph{tautologies}.

\begin{df}
In a tree, if the path from the root to a leaf crosses only $\lor$- nodes, then
this path will be called a \emph{$\lor$-only-path}. We extend the definition to the case
such that the leaf is equal to the root (i.e. the tree has size~1).
\end{df}

As suggested by Kozik's results, the limiting probability of tautologies reduces
to the limiting probability of so-called \emph{simple tautologies}, defined by the following:

\begin{df}
A \emph{simple tautology realised by $x_i$}, $i=1\ldots n$, is a Boolean expression
which has the shape $x_i\lor \bar{x}_i \lor f$ for some Boolean function $f$,
i.e. there exists a leaf labelled by $x_i$ and a leaf labelled by $\bar{x}_i$,
both connected to the root by a "$\lor$-only-path" (c.f. Figure~\ref{fig1}).
A \emph{simple tautology} is a simple tautology realised by any literal $x\in \{x_1,\ldots,x_n\}$.
We denote by $ST_m$ the number of simple tautologies of size $m$ (on $n$ variables, $n$ is omitted for simplicity),
and $ST=\cup_{m}ST_{m}$.
\end{df} 

\begin{figure}[htb]
\begin{center}
\includegraphics[width=0.8\textwidth]{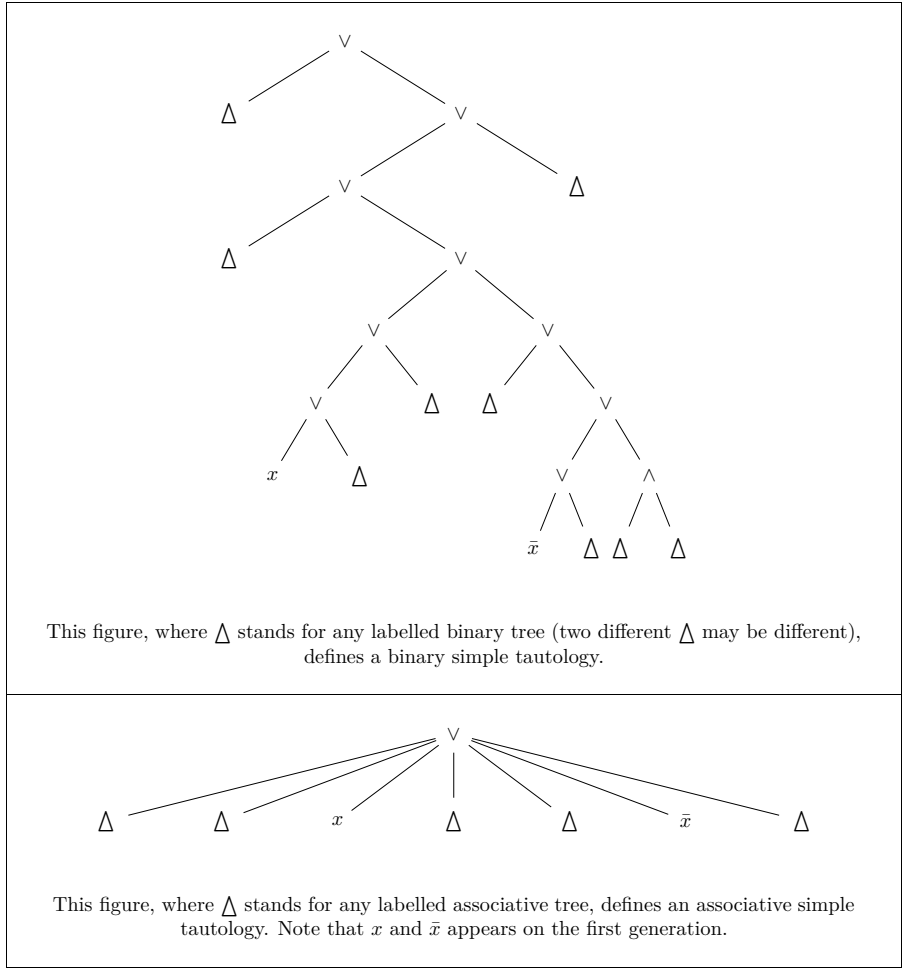}
\end{center}
\caption{Simple tautologies.}
\label{fig1}
\end{figure}

\begin{df}
\label{df:numbers}
Let $\V$ be a set of variables and $ST_m(\V)$ be the set of simple tautologies realised
by every $x\in\V$ but not by any other variable $y\not\in\V$.{}
\begin{itemize}
\item $K_{1,m}$ is the set of simple tautologies that are realised by exactly one variable:
\[K_{1,m}=\biguplus_{i=1}^n ST_m(\{x_i\}),\]
\item $K_{2,m}$ is the set of simple tautologies that are realised by exactly two different variables:
\[K_{2,m}=\biguplus_{\substack{i,j=1 \\ i\neq j}}^n ST_m(\{x_i, x_j\}),\]
$\vdots$
\item $K_{n,m}$ is the set of simple tautologies that are realised by exactly $n$ different variables:
\[K_{n,m}=ST(\{x_{1},\dots, x_n\}).\]
\end{itemize}
\end{df}

We denote by $ST^{x}(z)$ the generating function of simple tautologies realised by $x$.
Let $G(z)=ST^{x_{1}}(z)+ST^{x_{2}}(z)+\dots+ST^{x_{n}}(z) = nST^{x}(z)$. Obviously, $\forall m\in\IN,\ K_{1,m}\leq ST_{m}\leq G_{m}$,
because some tautologies are counted several times in $G$.
We get $G_m=\#K_{1,m}+2 \#K_{2,m}+\cdots +n \#K_{n,m}$.

To calculate limiting probabilities, we use the singular expansions of the considered generating
functions around their dominant singularities.
Consider the generating function $T(z)$ of a given family of And/Or trees
together with the generating function $S(z)$ of a subset $\S$ of such trees.

\begin{lem}\label{lem:limprob}
We assume that $T(z)$ and $S(z)$ have the same dominant singularity $\rho$ and a square root singular expansion
\[T(z)=a_T-b_T\sqrt{1-\frac{z}{\rho}} +\O\left(1-\frac{z}{\rho}\right), \quad
S(z)=a_S-b_S\sqrt{1-\frac{z}{\rho}}+\O\left(1-\frac{z}{\rho}\right),\]
around $\rho$. Then
\[\lim_{m\rightarrow \infty}\frac{S_m}{T_m}=\lim_{z\rightarrow \rho}\frac{S'(z)}{T'(z)}.\]
We call this number, when it exists, the \emph{limiting ratio} of the set $\S$ counted by $S(z)$.
\end{lem}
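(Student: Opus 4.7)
The plan is to evaluate both sides of the claimed identity independently via the Flajolet–Odlyzko singularity analysis framework and to observe that each collapses to the same ratio $b_S/b_T$ of the leading singular coefficients.

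First, I would apply the standard transfer theorem to the assumed expansion $T(z)=a_T-b_T\sqrt{1-z/\rho}+\O(1-z/\rho)$: the constant $a_T$ contributes nothing to $[z^m]T(z)$ for $m\geq 1$, the square-root term contributes
\[
 [z^m]\bigl(-b_T\sqrt{1-z/\rho}\bigr) \sim \frac{b_T}{2\sqrt{\pi}}\,\rho^{-m}\,m^{-3/2}
\]
(using $\Gamma(-1/2)=-2\sqrt\pi$ in the transfer formula), and the $\O(1-z/\rho)$ remainder contributes only $\O(\rho^{-m}m^{-2})$ and is therefore negligible. An identical computation for $S$ gives $S_m\sim \frac{b_S}{2\sqrt{\pi}}\,\rho^{-m}\,m^{-3/2}$, so $\lim_{m\to\infty} S_m/T_m = b_S/b_T$.

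Next, to compute $\lim_{z\to\rho^-}S'(z)/T'(z)$ I would simply differentiate the two singular expansions term by term inside the disk of convergence: this gives
\[
 T'(z)=\frac{b_T}{2\rho}(1-z/\rho)^{-1/2}+\O(1), \qquad S'(z)=\frac{b_S}{2\rho}(1-z/\rho)^{-1/2}+\O(1).
\]
As $z\to\rho^-$, both derivatives blow up like $(1-z/\rho)^{-1/2}$, so the singular factor cancels in the quotient and the limit is $b_S/b_T$. Matching the two values proves the lemma.

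The only real subtlety is the validity of the transfer theorem, which requires $T(z)$ and $S(z)$ to admit analytic continuation to a suitable $\Delta$-domain at $\rho$ with the singular expansion holding uniformly therein; differentiation of the error term inside the disk then follows from a Cauchy estimate on a small circle bypassing $\rho$. In all applications made later in this paper, $\Delta$-analyticity is already supplied by the Drmota–Lalley–Woods framework invoked earlier in this section, so the proof needs no further machinery.
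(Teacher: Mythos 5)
Your proposal is correct and follows essentially the same route as the paper: transfer theorems give $S_m/T_m\to b_S/b_T$, and termwise differentiation of the singular expansions shows $S'(z)/T'(z)\to b_S/b_T$ as $z\to\rho$, so the two limits agree. Your constant $\frac{b_S}{2\sqrt{\pi}}m^{-3/2}\rho^{-m}$ is in fact the more carefully normalised version of what the paper writes, but since the prefactors cancel in the ratio this makes no difference.
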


\begin{proof}
If $m$ tends to infinity, transfer lemmas (c.f.~\cite{FlSe}) give
\begin{align*}
\left.
\begin{array}{c}
\displaystyle S_m\sim\frac{b_S}{\Gamma(\frac12)}n^{-\frac32}\rho^{-m}\\
\displaystyle T_m\sim\frac{b_T}{\Gamma(\frac12)}n^{-\frac32}\rho^{-m}
\end{array}
\right\}
\Rightarrow \frac{S_m}{T_m}\sim \frac{b_S}{b_T}.
\end{align*}
Derivation of the singular expansions gives
\begin{align*}
S'(z)&\sim \frac{b_S}{2}\left(1-\frac{z}{\rho}\right)^{-\frac12},\\
T'(z)&\sim \frac{b_T}{2}\left(1-\frac{z}{\rho}\right)^{-\frac12}.
\end{align*}
Hence the result follows.
\end{proof}

\begin{rmq}
If $\S$ is the set of trees computing a given function~$f$, then,
the limiting probability of~$f$ is equal to the limiting ratio of $\S$ because for all~$m\geq 1$,
\[\IP_{m,n}(f)=\frac{\#\text{ trees of size }m\text{ computing }f}{\#\text{ all trees of size }m}=\frac{S_m}{T_m}.\]
\end{rmq}

\subsection{Binary plane trees.}\label{sec:catalan-true}

In the binary plane model, Kozik has shown that asymptotically, when $n$ tends to infinity,
all tautologies are simple tautologies.
Therefore, to estimate the probability that a binary plane tree computes the function $True$,
it suffices to count simple tautologies, and furthermore, thanks to the following proposition,
simple tautologies that are realised by only one variable (i.e. the set $K_{1,m}$).

\begin{prop}\label{prop-trick}
If $n$ tends to infinity, then
\[\lim_{m\rightarrow \infty} \frac{1}{T_{m}}\sum_{k=1}^n k\#K_{k,m}=
\lim_{m\rightarrow \infty} \frac{\#K_{1,m}}{T_m}+\O\left(\frac{1}{n^2}\right).\]
\end{prop}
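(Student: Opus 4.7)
The plan is to recast the statement as a bound on the ``over-counting'' that occurs when one replaces $\sum_{k\ge 1}k\,\#K_{k,m}$ by $\#K_{1,m}$, and to control this over-counting via generating functions for simple tautologies realised by one or two fixed variables. First, note that by double-counting pairs (tautology, realising variable), and by the symmetric role of the $n$ variables,
\[
\sum_{k=1}^n k\,\#K_{k,m}=G_m=n\cdot ST^x_m,
\qquad
\sum_{k=1}^n k\,\#K_{k,m}-\#K_{1,m}=\sum_{k\ge 2}k\,\#K_{k,m}\le 2\binom{n}{2}\,ST^{x,y}_m,
\]
where $ST^{x,y}_m$ is the number of size-$m$ simple tautologies realised by \emph{both} $x$ and $y$ (possibly by others too); here I used $k\le k(k-1)$ for $k\ge 2$ and the symmetry between variables. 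Hence it is enough to show that the limiting ratio $ST^{x,y}_m/T_m$ is $\O(1/n^4)$ as $n\to\infty$, since then the excess limit is $\binom{n}{2}\cdot\O(1/n^4)=\O(1/n^2)$ and Lemma~\ref{lem:limprob} applies.

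Next, I set up the generating functions through the $\lor$\emph{-top} decomposition of a binary plane And/Or tree. Let $V(z)=T(z)-T(z)^2$ be the generating function of trees whose root is either a leaf or an $\land$-node. Every tree has a unique maximal subtree of $\lor$-nodes at the root, a plane binary shape (counted by Catalan numbers) whose leaves are ``non-$\lor$-rooted'' subtrees. Writing $F(u)=\sum_{k\ge 1}C_{k-1}u^k=\tfrac12\bigl(1-\sqrt{1-4u}\bigr)$, this yields $T=F(V)$. A tree is a simple tautology realised by $x$ iff, among the leaves of its $\lor$-top, at least one is $x$ and at least one is $\bar x$. Inclusion--exclusion then gives
\[
ST^{x}(z)=F(V)-2F(V-z)+F(V-2z),
\]
and analogously, requiring both $x,\bar x,y,\bar y$ to appear,
\[
ST^{x,y}(z)=F(V)-4F(V-z)+6F(V-2z)-4F(V-3z)+F(V-4z),
\]
i.e.\ finite differences of $F\circ V$ of orders $2$ and $4$ respectively, with step $z$.

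Now I do singular analysis around $\rho_n=1/(16n)$, at which $V(\rho_n)=T(\rho_n)-T(\rho_n)^2=\tfrac14-\tfrac1{16}=\tfrac{3}{16}$ lies strictly inside the disk of convergence $|u|<\tfrac14$ of $F$. Hence $u\mapsto F(u-j\rho_n)$ is analytic near $u=V(\rho_n)$ for every fixed $j$, and the only source of non-analyticity in $F(V(z)-jz)$ at $z=\rho_n$ is the square-root singularity of $V$. Writing the local expansion $V(z)=V(\rho_n)-c\sqrt{1-z/\rho_n}+\O(1-z/\rho_n)$, the coefficient of $\sqrt{1-z/\rho_n}$ in $F(V(z)-jz)$ equals $-c\,F'\bigl(V(\rho_n)-j\rho_n\bigr)$. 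Thus the square-root coefficient of $ST^{x}(z)$ is, up to the overall constant $-c$, a second-order finite difference of $F'$ at step $\rho_n$, and that of $ST^{x,y}(z)$ is a fourth-order finite difference. Taylor expanding $F'$ around $V(\rho_n)$ and using the standard identities $\sum_{j=0}^p\binom{p}{j}(-1)^j j^q=0$ for $q<p$, the first non-vanishing terms are of orders $\rho_n^2$ and $\rho_n^4$ respectively, giving singular coefficients of sizes $\O(n^{-2})$ and $\O(n^{-4})$.

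Combining with Lemma~\ref{lem:limprob} (limiting ratios read off square-root coefficients), we obtain $\lim_m ST^{x,y}_m/T_m=\O(1/n^{4})$, whence
\[
\lim_{m\to\infty}\frac{1}{T_m}\Bigl(\sum_{k=1}^n k\,\#K_{k,m}-\#K_{1,m}\Bigr)
\le 2\binom{n}{2}\,\lim_{m\to\infty}\frac{ST^{x,y}_m}{T_m}=\O\!\left(\frac{1}{n^2}\right),
\]
which is the claimed estimate. The main technical point is the singular analysis step: one must verify that $V(\rho_n)-j\rho_n$ stays strictly inside the disk of analyticity of $F$ so that no extra singularity appears, and then carefully track the cancellations in the finite differences to confirm that the leading non-zero contributions are indeed of orders $\rho_n^2$ and $\rho_n^4=\O(n^{-4})$; everything else is a mechanical application of transfer theorems.
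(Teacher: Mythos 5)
Your proof is correct, but it takes a genuinely different route from the paper. The paper's proof is a two-line application of Kozik's restriction machinery: it observes that a simple tautology realised by (at least) $k$ distinct variables has at least $k$ restrictions for the pattern $S=\bullet\,|\,S\lor S\,|\,\boxempty\land\boxempty$, and then invokes Theorem~\ref{thm:Jakub} to get $\#K_{k,m}/T_m=\O(n^{-k})$, so that $\sum_{k\geq 2}k\,\#K_{k,m}/T_m=\O(n^{-2})$. You instead bound $\sum_{k\geq2}k\,\#K_{k,m}\leq\sum_{k\geq2}k(k-1)\,\#K_{k,m}=n(n-1)\,ST^{x,y}_m$ by double counting ordered pairs of realising variables, and then prove $\lim_m ST^{x,y}_m/T_m=\O(n^{-4})$ directly, via the $\lor$-top decomposition $T=F(V)$ with $V=T-T^2$ and $F(u)=\frac12(1-\sqrt{1-4u})$, inclusion--exclusion giving $ST^{x,y}$ as a fourth-order finite difference of $F\circ V$ with step $z$, and the observation that the square-root coefficient at $\rho_n$ is then a fourth-order finite difference of $F'$ at step $\rho_n$ around $V(\rho_n)=3/16$, hence $\Theta(\rho_n^4)$. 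All the individual steps check out: the identity $T=F(V)$ holds ($F(T-T^2)=T$ since $T\leq 1/4$), the inclusion--exclusion correctly encodes ``the literals $x,\bar x,y,\bar y$ all occur as $\lor$-top leaves'', the points $V(\rho_n)-j\rho_n$ stay inside the disc of analyticity of $F$, and the finite-difference cancellation $\sum_j(-1)^j\binom{4}{j}j^q=0$ for $q<4$ gives the claimed $\O(n^{-4})$. What your approach buys is a self-contained, elementary argument avoiding Theorem~\ref{thm:Jakub} entirely, and it would even yield explicit constants; what it loses is portability: it leans on the explicit algebraic form of $T$ and the Catalan $\lor$-top decomposition of binary plane trees, whereas the paper's pattern-restriction argument is precisely the one that transfers (as the paper later needs) to the associative, commutative, and general models. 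One small point of rigor common to both proofs: strictly one should phrase the conclusion in terms of $\limsup$ of $\frac{1}{T_m}\sum_{k\geq2}k\,\#K_{k,m}$, since only an upper bound by a convergent ratio is established; this does not affect the use made of the proposition.
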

The proof of the proposition is deferred to the end of this section since further technical concepts are required.

\begin{thm}\label{catalan-constant}
The limiting ratio of simple tautologies, and thus the limiting ratio of tautologies in the binary plane model is 
\[\lim_{m\rightarrow \infty}\IP_{m,n}(True)=\lim_{m\rightarrow \infty}\frac{ST_m}{T_m} \left(1+\O\left(\frac{1}{n}\right)\right)=\frac{3}{4n}+\O\left(\frac{1}{n^2}\right),
\text{ when }n \text{ tends to infinity,}\]
where $T_m$ is the total number of plane binary trees and
$ST_m$ is the number of simple tautologies of size $m$ labelled with $n$ variables. 
\end{thm}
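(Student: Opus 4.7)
The plan is to reduce the computation to the generating function of simple tautologies realized by a specific variable $x$, and to extract its contribution via Lemma~\ref{lem:limprob}. First I would invoke Kozik's result that, asymptotically in $n$, all tautologies are simple tautologies, which replaces $\IP_{m,n}(True)$ by $ST_m/T_m$ up to a factor $1+\O(1/n)$. Then the sandwich $\#K_{1,m}\le ST_m\le G_m$ combined with Proposition~\ref{prop-trick} shows that the limiting ratio of $ST_m$ agrees with that of $G_m = n\cdot ST^x_m$ up to $\O(1/n^2)$, so it suffices to compute $n$ times the limiting ratio of $ST^x(z)$.

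The combinatorial heart is a decomposition around the root. The maximal subtree of $\lor$-labelled internal nodes issuing from the root is a plane binary tree with some $k\ge 1$ external positions; each external position carries a non-$\lor$-rooted subtree, enumerated by $N(z)=T(z)-T(z)^2$ (a leaf or an $\land$-rooted tree). A simple tautology realized by $x$ is exactly such a tree in which at least one external position is the literal $x$ and at least one is $\bar{x}$. Writing $F(y) = (1-\sqrt{1-4y})/2 = \sum_{k\ge1} C_{k-1}y^k$ for the Catalan generating function and applying inclusion-exclusion (the $k=1$ contributions cancel automatically), this gives
\[
ST^x(z) \;=\; F(N(z)) - 2\,F(N(z)-z) + F(N(z)-2z) \;=\; T(z) - 2\,F(N(z)-z) + F(N(z)-2z).
\]

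The last step applies Lemma~\ref{lem:limprob} at $\rho_n=1/(16n)$, where $T(\rho_n)=1/4$, $N(\rho_n)=3/16$, and $T(z) = 1/4 - (1/4)u + \O(u^2)$ in the variable $u=\sqrt{1-z/\rho_n}$. Setting $a = 1-4(N(\rho_n)-\rho_n) = \tfrac14 + \tfrac{1}{4n}$ and $b = 1-4(N(\rho_n)-2\rho_n) = \tfrac14 + \tfrac{1}{2n}$, one extracts the $u$-coefficient of $ST^x(z)$ and divides by that of $T(z)$; multiplying by $n$ gives the limiting ratio of $G$ as
\[
n\!\left(1-\tfrac{1}{\sqrt{a}}+\tfrac{1}{2\sqrt{b}}\right) \;=\; n - \frac{2n}{\sqrt{1+1/n}} + \frac{n}{\sqrt{1+2/n}},
\]
and a routine Taylor expansion in $1/n$ yields $3/(4n) + \O(1/n^2)$. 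The only subtlety I foresee is verifying that $ST^x(z)$ is the generating function of tautologies realized by $x$ in the sense that a tree counted by $ST^x$ may also be realized by other variables, so that summing over $x$ produces exactly $G_m = \sum_k k\cdot \#K_{k,m}$; once this is established, Proposition~\ref{prop-trick} absorbs the overcounting and the remaining work is algebra.
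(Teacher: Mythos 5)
Your proposal is correct and reaches the right constant, but it derives the generating function $ST^{x}(z)$ by a different decomposition than the paper. The paper works recursively at the root: it sets up implicit quadratic equations for $\bar{g}_x(z)$ (trees with no $\lor$-only-path to an $x$-leaf) and for $\overline{ST}^x(z)$, namely $\bar{g}_x=T^2+\bar{g}_x^2+(2n-1)z$ and $\overline{ST}^x=T^2+(\overline{ST}^x)^2-2g_xg_{\bar x}+2nz$, solves the radicals explicitly, and evaluates $\lim_{z\to\rho_n}G'(z)/T'(z)$ (by computer algebra). You instead peel off the maximal $\lor$-component at the root, writing $T=F(N)$ with $N=T-T^2$ and $F(y)=(1-\sqrt{1-4y})/2$, and obtain the closed form $ST^x=F(N)-2F(N-z)+F(N-2z)$ by inclusion--exclusion on the external positions carrying the literals $x$ and $\bar x$; this is the ``solved'' form of the paper's recursions (indeed $F(N-z)$ satisfies exactly the paper's equation for $\bar g_x$, since $N-z=(2n-1)z+T^2$), and it makes the singular expansion at $\rho_n=\tfrac1{16n}$ transparent enough to finish by a hand Taylor expansion of $n\bigl(1-2(1+\tfrac1n)^{-1/2}+(1+\tfrac2n)^{-1/2}\bigr)$. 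The surrounding architecture is identical to the paper's: Kozik's reduction of tautologies to simple tautologies, the sandwich $\#K_{1,m}\le ST_m\le G_m$ with Proposition~\ref{prop-trick}, and Lemma~\ref{lem:limprob}. Your approach buys an explicit, verifiable formula and avoids the Maple step; the only points you should make explicit in a full write-up are that the outer square root in $F(N(z)-z)$ and $F(N(z)-2z)$ does not vanish at $\rho_n$ (since $1-4(N(\rho_n)-\rho_n)=\tfrac14+\tfrac1{4n}>0$), so these compositions are analytic past the square-root singularity of $N$ and Lemma~\ref{lem:limprob} applies, and the overcounting remark you already flagged, which Proposition~\ref{prop-trick} indeed absorbs.
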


\begin{proof}
Let us compute the generating function of simple tautologies.
First, let $g_x$ be the generating function of trees containing a leaf labelled by $x$
which is connected to the root by an $\lor$-only-path (c.f. Figure~\ref{fig:gx}) and $\bar{g}_x(z)$ the generating function of trees which are not of such shape.
Hence $g_x=T-\bar{g}_x$.

\begin{figure}[htb]
\begin{center}
\fbox{\includegraphics[width=0.3\textwidth]{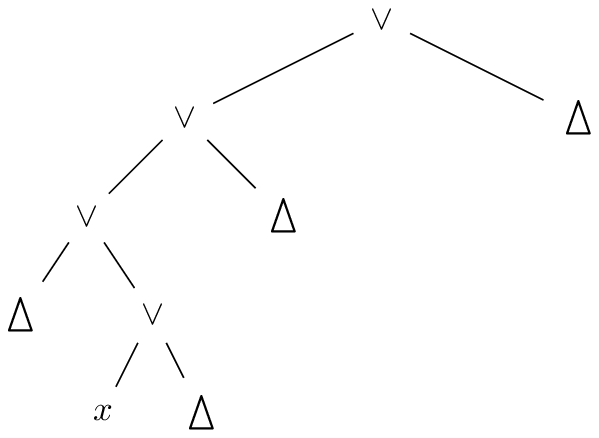}}
\end{center}
\caption{A tree counted by the generating function $g_x$.}
\label{fig:gx}
\end{figure}

The function $\bar{g}_x$ is given by:
\[\bar{g}_x(z)=T(z)^2+\bar{g}_x(z)^{2}+(2n-1)z.\]
This equation is obtained by decomposing the tree at its root: if the root is labelled by an $\land$,
the tree is not of the shape depicted in Figure~\ref{fig:gx} and both subtrees are arbitrary trees.
If the root is labelled by an $\lor$, neither of the two subtrees may have the shape of Figure~\ref{fig:gx}.
If the root is a single leaf, it must not be labelled by $x$.
By a symbolic argumentation, the three cases translate to the three terms in the equation. 
Solving this equation, using the explicit expression of $T(z)$ given by Proposition~\ref{prop:t(z)}, we get:
\[\bar{g}_x(z)=\frac{1}{2}-\frac{\sqrt{2+2\sqrt{1-16nz}-16nz+16z}}{4},\]
and thus
\begin{equation}\label{eq:catalangx}
g_x(z)=\frac{\sqrt{2+2\sqrt{1-16n}-16nz+16z}-\sqrt{1-16nz}-1}{4}.
\end{equation}

Let $h_x$ be the generating function of trees given by $t_1\lor t_2$ (or $t_2\lor t_1$),
where $t_1$ is a tree counted by $g_x$ and $t_2$ is a tree counted by $g_{\bar{x}}$,
i.e. simple tautologies realised by $x$, where $x$ and $\bar{x}$ lie in different subtrees of the root (c.f. Figure~\ref{fig:hx}).

\begin{figure}[htb]
\begin{center}
\fbox{\includegraphics[width=0.4\textwidth]{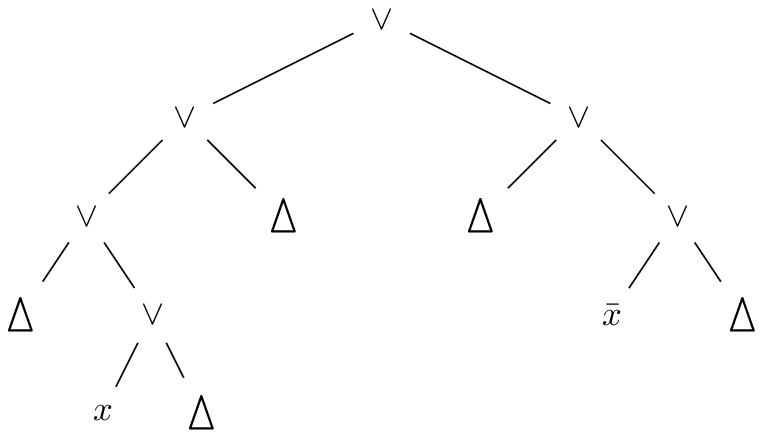}}
\end{center}
\caption{A tree counted by $h_x$.}
\label{fig:hx}
\end{figure}

Obviously, $h_x(z)=2g_x^2(z)$. Recall that $ST^{x}(z)$ is the generating function of simple tautologies
realised by the variable $x$, and $\overline{ST}^x(z)$ be the generating function of trees that are not simple tautologies realised by $x$. Again by decomposing and analysing the label of the root, we get:
\[\overline{ST}^x=T(z)^2+(\overline{ST}^x(z)^2-h_x(z))+2nz.\]
In particular, if the root is labelled by an $\lor$, neither of the
two subtrees can be a simple
tautology realised by $x$ and additionally the whole tree cannot be of the shape depicted in Figure~\ref{fig:hx}.
Solving this equation, we obtain an explicit expression for
 $\overline{ST}^x$, and $ST^{x}(z)=T(z)-\overline{ST}^x(z)$ yields an expression
 for $ST^{x}(z)$, where $Z$ denotes $Z:=\sqrt{1-16nz}$:
\begin{align*}
ST^{x}(z)&= \frac14\Big(-1-Z\Big.\nonumber\\
&+\left.\sqrt{6+6Z-2\sqrt{2+2Z-16nz+16z}-2Z\sqrt{2+2Z-16nz+16z}-48nz+16z}\right).
\end{align*}

By Proposition \ref{prop-trick},
$\lim_{m \rightarrow \infty} \frac{ST_m}{T_m} =\lim_{m\rightarrow \infty}\frac{G_m}{T_m}+ \O\left(\frac{1}{n^2}\right),$
when $n$ tends to infinity.
Due to Lemma~\ref{lem:limprob} we can compute the ratio 
\[\lim_{m\rightarrow \infty}\frac{G_m}{T_m} =\lim_{z\rightarrow \frac{1}{16n}}\frac{G'(z)}{T'(z)} = \frac{3}{4n} + \O\left(\frac{1}{n^2}\right)\]
where $G(z)=nST^{x}(z)$ is given just after Definition~\ref{df:numbers}.
Thus,
\[\lim_{m \rightarrow \infty} \frac{ST_m}{T_m}=\frac{3}{4n} +
\O\left(\frac{1}{n^2}\right).\]
Since, when $n$ tends to infinity, asymptotically almost every tautology is a simple tautology, this implies
\[\lim_{m\rightarrow \infty}\IP_{m,n}(True) = \lim_{m \rightarrow \infty} \frac{ST_m}{T_m} + \O\left(\frac{1}{n^2}\right).\]
\end{proof}

We now go back to Proposition~\ref{prop-trick}. In the following, we define pattern languages and some related vocabulary, which can be found in Kozik's paper~\cite{Kozik} for the binary case. Interpreting a given And/Or tree as an element from a pattern language, which is possible if pattern and trees have a similar structure, will lead us to the proof of Proposition~\ref{prop-trick}.

\begin{df}\label{def:pattern}
A \emph{pattern language} $\tilde{L}$ is a set of plane trees with internal nodes labelled by $\land$ or~$\lor$, and external nodes labelled by $\bullet$ or $\boxempty$. The leaves labelled by~$\boxempty$ are called \emph{placeholders} and those labelled by~$\bullet$ are called \emph{pattern leaves}. We define $s(x,y)$ as the generating function of $\tilde{L}$, with $x$ marking the pattern leaves and $y$ marking the placeholders. 

Given a pattern language $\tilde{L}$, we will denote by $L$ the set of plane labelled trees with internal nodes labelled by $\land$ or $\lor$, and external nodes labelled by literals or placeholders, such that if we replace every literal by a $\bullet$, we obtain a tree of $\tilde{L}$. Therefore, $s(2nx,y)$ is the generating function of $L$.

Given a set of trees $\T$, we define $\tilde{L}[\T]$ (resp. $L[\T]$) as the set of trees obtained by taking an element of $\tilde{L}$ (resp. $L$) and plugging an element of $\T$ in each placeholder.

Given two pattern languages $L$ and $M$, we define the composition $L[M]$ of $L$ and $M$ by the pattern language obtained by plugging $M$-patterns into the placeholders of the structures of $L$. The pattern leaves of $L[M]$ are then both the pattern leaves of $L$ and $M$.  
\end{df}

\begin{df}
A pattern language $L$ is \emph{unambiguous} if for every
family $\T$ every element of $L[\T]$ can be constructed in only one way.\\
A pattern language $L$ is \emph{subcritical} for $\T$ if the
generating function $t(z)$ of $\T$
 has a square root singularity $\rho$ and if $s(x,y)$
 is analytic in some set $\{(x,y): |x|\leq \rho + \epsilon,\: |y|\leq t(\rho)+\epsilon\}$.
\end{df}

In the following, we call a variable essential for a tree if and only if it is essential for the function computed by this tree (c.f. Definition~\ref{def:essential}).

\begin{df}
If $t$ is an element of $L[\T]$, $t$ has $q$ \emph{$L$-repetitions} if $q$ equals the difference between
the number of its $L$-pattern leaves and the number of distinct variables (and not literals) that appear in its $L$-pattern leaves.\\
Further, $t$ has $q$ \emph{$L$-restrictions} if $q$ equals the number of its $L$-repetitions plus the number of essential variables of $t$ that appear at least once in its $L$-pattern leaves.
\end{df}

For an example of repetitions and restrictions, see Figure~\ref{fig:restrictions}. 
\begin{figure}[htb]
\centering
\fbox{\includegraphics[width=0.25\textwidth]{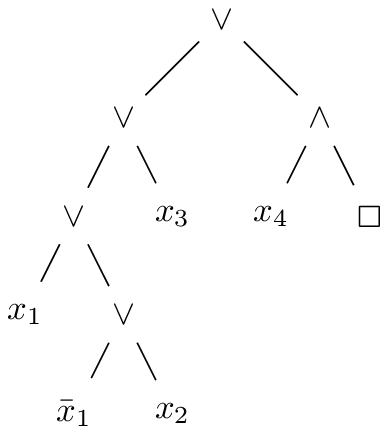}}
\caption{A binary tree with one repetition and one restriction - note
that none of the variables $x_1,x_2,x_3,x_4$ are essential as the
tree is a tautology}
\label{fig:restrictions}
\end{figure}

\begin{thm}\cite[Kozik]{Kozik}\label{thm:Jakub}
Let $L$ be a binary unambiguous language which is subcritical for
$\T$. We denote by $L[\T]_{m,n}^{[k]}$ (resp by $L[\T]_{m,n}^{[\geq k]}$) 
the number of elements of $L[\T]$ of size $m$ which have $k$
(resp. at least $k$) $L$-restrictions, and by $L[\T]_m$ the
number of elements of $L[\T]$ of size $m$. Then,
\[\lim_{m\rightarrow\infty}\frac{L[\T]_{m,n}^{[\geq k]}}{T_m} \sim
\lim_{m\rightarrow\infty}\frac{L[\T]_{m,n}^{[k]}}{T_m} \sim
\frac{d}{n^k},\]
when $n$ tends to infinity, and $d$ is a constant.
\end{thm}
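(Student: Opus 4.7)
The plan is to track restrictions with a refined generating function and use the subcriticality assumption to localise the singular behaviour entirely on the $\T$ component, so that every restriction contributes an independent factor of $1/n$.

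First I would rewrite $L[\T]$ as a generating function composition. Since $L$ is unambiguous, each tree in $L[\T]$ is uniquely decomposed into its $L$-skeleton (with $p$ pattern leaves, each labelled by a literal, and $q$ placeholders filled by trees from $\T$). This gives
\[L[\T](z)=s(2nz,T(z)),\]
where $s(x,y)$ is the bivariate generating function of the pattern language $\tilde L$ with $x$ marking pattern leaves and $y$ marking placeholders. Subcriticality ensures $s(x,y)$ is analytic in a bidisc strictly containing $(2n\rho,T(\rho))$, so the only singularity of $L[\T](z)$ is the square-root singularity of $T$ at $\rho$.

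Next I would classify labellings of pattern leaves. For a skeleton with $p$ pattern leaves, group the leaves by which variable they carry; a set partition of $\{1,\dots,p\}$ into $p-r$ blocks yields exactly $r$ repetitions. Choosing distinct variables for these blocks gives $\binom{n}{p-r}\cdot 2^{p}\cdot(\text{partition-count})=\Theta(n^{p-r})$ labellings for each partition. Essential-variable restrictions are handled separately: for each block of pattern leaves whose variable is forced to equal an essential variable of the resulting function (which must then appear also inside one of the plugged $\T$-trees), one loses a further factor of $n$ from the labelling count, because the variable is constrained rather than free. Thus the number of labellings with exactly $k$ restrictions carries a factor $n^{p-k}$ instead of $n^{p}$.

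Finally I would assemble the singular expansion. For each fixed skeleton shape and each restriction pattern, the contributing generating function has the form $\phi(z)\cdot n^{p-k}$, where $\phi(z)$ is analytic in the placeholder variables by subcriticality and involves partial derivatives of $s$ together with powers of $T(z)$. Applying the transfer lemma in the spirit of Lemma~\ref{lem:limprob} and summing over the (finite, size-$p$-indexed) contributions yields
\[\frac{L[\T]_{m,n}^{[k]}}{T_m}\sim\frac{d}{n^k}\qquad(m\to\infty,\text{ then }n\to\infty),\]
with $d$ the constant obtained from the singular coefficients at $z=\rho$. The ``$\ge k$'' statement follows because the excess contributions from $k+1,k+2,\dots$ restrictions form a geometric series in $1/n$.

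The main obstacle is the treatment of essential-variable restrictions, because essentiality is a global property of the computed function, not a local property of the labelling. One has to argue that replacing ``essential in the whole tree'' by ``essential in the plugged subtree'' introduces only lower-order errors when $n$ is large, so that the $1/n$ cost per essential-match is genuinely independent across blocks. Once this decoupling is established, the counting above goes through routinely and produces the stated asymptotic.
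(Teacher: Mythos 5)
Your architecture --- unambiguous decomposition $L[\T](z)=s(2nz,T(z))$, subcriticality to pin the singular behaviour on the $\T$-component, an exact count of pattern-leaf labellings with $k$ restrictions carrying a factor $n^{l-k}$ times a polynomial in $l$, then transfer --- is the same route the paper takes (the labelling count is its Proposition~\ref{Jakub-labellings}, the analytic step its Proposition~\ref{prop:forallcases}). But two of your steps have real gaps. The essential-variable decoupling, which you rightly call the main obstacle, is left unresolved: ``introduces only lower-order errors'' is not an argument. The fix used in the paper is to fix the set $\V$ of essential variables, $|\V|=v$, and write the \emph{exact} count of labellings with $r$ repetitions and $k$ restrictions,
\[\left\{\begin{matrix}l\\l-r\end{matrix}\right\}\binom{v}{k-r}\,(l-r)^{\underline{k-r}}\,(n-v)^{\underline{l-r-(k-r)}}\,n^{m-l}\,2^m,\]
where the $k-r$ blocks receiving essential variables are drawn from a set of size $v=O(1)$ rather than $n$ --- that is precisely where the extra $n^{-(k-r)}$ comes from --- and the remaining blocks draw injectively from the $n-v$ non-essential variables. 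Summing over $r\le k$ yields the polynomial $w_{v,k}(l)$ and the factor $(n-v)^{\underline{l-k}}\le n^{l-k}$; no independence-across-blocks heuristic is needed because the formula is exact once $\V$ is fixed.

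The second gap is your treatment of the ``$\ge k$'' case. The number of restrictions of a tree of size $m$ can be as large as its number of pattern leaves, which is unbounded as $m\to\infty$, and the constants in the exact-$j$ asymptotics depend on $j$; so summing ``a geometric series in $1/n$'' over $j\ge k$ requires interchanging that infinite sum with the limits in $m$ and $n$, which you have not justified and which is not obviously true with your bounds. The argument that actually closes this is a sandwich: one bounds the number of labellings with \emph{at least} $k$ restrictions directly by the same polynomial-weighted quantity $\sum_{l}\tilde L[\tilde\T](m,l)\,w_{v,k}(l)\,n^{m-k}2^m$, giving $L[\T]^{[\ge k]}_{m,n}/T_m=\O(n^{-k})$, and combines this with the trivial inequality $L[\T]^{[k]}_{m,n}\le L[\T]^{[\ge k]}_{m,n}$ to get both asymptotics at once. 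With those two repairs your proof matches the paper's.
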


Due to this theorem, we can now prove Proposition \ref{prop-trick}.
\begin{proof}[Proof of Proposition \ref{prop-trick}]
Let us consider the pattern language $S=\bullet|S\lor S| \boxempty \land \boxempty$ (c.f. \cite{Kozik}).
The set of all trees computing $True$ with exactly $i$ $S$-restrictions includes $K_i$.
Therefore, thanks to Theorem~\ref{thm:Jakub}, we get
\[\lim_{m\rightarrow \infty} \frac{\#K^m_i}{T_m}=\O\left(\frac{1}{n^i}\right),\]
when $n$ tends to infinity. Therefore, 
\[\lim_{m \rightarrow \infty}\frac{2\#K_{2,m}+\ldots+n\#K_{n,m}}{T_m}=
\O\left(\frac{1}{n^2}\right)+(n-2)\,\O\left(\frac{1}{n^3}\right)=
\O\left(\frac{1}{n^2}\right).\]
\end{proof}

\subsection{Associative plane trees.}\label{sec:assoc}

To compute the limit of $\IP^a_n(True)$ when $n$ tends to infinity, we define simple tautologies, and prove that asymptotically every tautology is a simple tautology. Therefore, we will generalise Theorem~\ref{thm:Jakub} to associative trees.

\begin{thm}
\label{thm:assoc-constant}
The limiting probability of the function $True$ in the associative model,
$\IP^a_{m,n}(True)$, is given by
\[\lim_{m\rightarrow \infty}\IP^a_{m,n}(True)=
\frac{51-36\sqrt{2}}{n} + \O\left(\frac{1}{n^2}\right).\]
\end{thm}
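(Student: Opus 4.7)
The plan is to follow the strategy used for the binary plane case in Section~\ref{sec:catalan-true}. I first establish that, asymptotically as $n$ tends to infinity, almost every tautology in the associative model is a \emph{simple tautology}, and that almost every simple tautology is realised by exactly one variable (the analogue of Proposition~\ref{prop-trick}, so that $K_{1,m}$ dominates). Once this reduction is in place, it remains to compute the generating function $G(z) = n\,ST^{x,a}(z)$ of simple tautologies realised by a fixed variable~$x$ in the associative model, and to apply Lemma~\ref{lem:limprob} at the singularity $\alpha_n = (3-2\sqrt{2})/(2n)$.

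The first reduction requires an associative analogue of Theorem~\ref{thm:Jakub}: for a suitable unambiguous pattern language~$S$ that is subcritical for~$\A$, one needs that the number of elements of $S[\A]_{m,n}$ with (at least) $k$ $S$-restrictions is of order $1/n^k$ relative to $A_m$. The stratified condition and the unbounded out-degree force a non-binary variant of Kozik's pattern language $\bullet \mid S \lor S \mid \boxempty \land \boxempty$, in which $\lor$-nodes have any number $\geq 2$ of $S$-children or placeholders and $\land$-nodes have any number $\geq 2$ of placeholder children, while respecting the alternation between $\land$ and $\lor$. Verifying the hypotheses of the Drmota--Lalley--Woods theorem for this system so that Kozik's transfer argument still applies is the main technical obstacle; the square-root behaviour of $A(z)$ from~\eqref{eq:assocGF} gives the right setting, and the adaptation should proceed along the lines of the similar systems treated in~\cite{FGGG11}.

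For the explicit generating function, observe that in the associative model a simple tautology realised by~$x$ must have root labelled~$\lor$, because the alternating rule forces any $\lor$-only-path from the root to stop at a child, hence reach a leaf at depth one. The $k \geq 2$ children of that $\lor$-root are drawn from the family counted by $\hat A(z)$, that is leaves or $\land$-rooted subtrees, and we require that at least one of them is the leaf~$x$ and at least one the leaf~$\bar x$. A standard inclusion--exclusion on these two forbidden events gives
\[
ST^{x,a}(z) \;=\; \frac{\hat A(z)^{2}}{1-\hat A(z)} \;-\; 2\,\frac{(\hat A(z)-z)^{2}}{1-(\hat A(z)-z)} \;+\; \frac{(\hat A(z)-2z)^{2}}{1-(\hat A(z)-2z)}.
\]

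Finally, Lemma~\ref{lem:limprob} together with the identity $A = 2\hat A - 2nz$ reduces $\lim_{m\to\infty} G_m/A_m$ to the analysis of $G(z) = n\,F(\hat A(z),z)$ near $z = \alpha_n$, where $F(u,z) = g(u) - 2g(u-z) + g(u-2z)$ and $g(u) = u^{2}/(1-u)$. The second-difference structure of $F$ yields $F(u,z) = z^{2} g''(u) + O(z^{3})$ and $\partial_u F(u,z) = z^{2} g'''(u) + O(z^{3})$ as $z \to 0$. Since the singular part of $G'$ near $\alpha_n$ is governed by $\partial_u F(u,\alpha_n)\,\hat A'(z)$ and $\hat A'/A' \to 1/2$, combining $\hat A(\alpha_n) = 1 - \sqrt{2}/2$, $g'''(1-\sqrt{2}/2) = 24$, and $n\alpha_n^{2} = (17-12\sqrt{2})/(4n)$ yields the announced limit $(51-36\sqrt{2})/n + O(1/n^{2})$.
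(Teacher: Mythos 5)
Your overall architecture is the paper's: reduce tautologies to simple tautologies, discard the multiply-realised ones (the analogue of Proposition~\ref{prop-trick}), and evaluate $\lim G'/A'$ at $\alpha_n$ via Lemma~\ref{lem:limprob}. Your endgame is actually cleaner than the paper's: the exact inclusion--exclusion formula for $ST^{x,a}$ and the second-difference expansion correctly yield $\tfrac12\,g'''(1-\sqrt2/2)\,n\alpha_n^2=\frac{51-36\sqrt2}{n}$, whereas the paper only writes down the dominant subfamily in which $x$ and $\bar x$ each occur once at depth one (its displayed generating function apparently missing a factor $z^2$ and using $A$ where $\hat A$ is meant) and delegates the limit to Maple. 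Your constant and error term check out.

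The gap is in the step you yourself flag as the main obstacle, and it is not primarily a matter of verifying Drmota--Lalley--Woods hypotheses. The pattern you propose --- $\lor$-nodes with $\ge 2$ children that are recursive subpatterns or placeholders, $\land$-nodes with $\ge 2$ placeholder children --- lacks the property on which the whole reduction rests: that assigning $False$ to every pattern leaf forces the tree to compute $False$. If all children of an $\land$-node are placeholders, the subtree below that $\land$ contains no pattern leaves and cannot be forced, so the arguments ``a tautology must have at least one restriction'' and ``an $x/x$ repetition is impossible'' both collapse. The paper's pattern $R$ in \eqref{eq:Rpattern} is built precisely to retain this property: every $\land$-node keeps one recursive $\check N$-child and turns the remaining children into placeholders, while $\lor$-nodes are fully recursive. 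One then needs the composed pattern $R[R]$ (which you do not mention) to run the case analysis showing that a tautology with a single $x/\bar x$ restriction must have $x$ and $\bar x$ joined to the root by $\lor$-only paths, hence at depth one by stratification; and the subcriticality required for Theorem~\ref{thm:Jakub-assoc} is checked by solving explicitly for the pattern's bivariate generating function and locating its singularity relative to $\alpha_n$ (Lemma~\ref{assoc-subcritical}), not by a Drmota--Lalley--Woods argument. Without a pattern having the forcing property, Proposition~\ref{st-assoc} is unproved and the constant you compute is not justified as the limiting probability of $True$.
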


Let us first show that Theorem~\ref{thm:Jakub} can be generalised to the associative case, and then use
it to show Theorem~\ref{thm:assoc-constant}.

\subsubsection{Generalisation of Kozik's theorem to associative trees.}\label{sec:assoc-generalisation}

\begin{thm}\label{thm:Jakub-assoc}
Let $L$ be an unambiguous pattern language with out-degree different from $1$, which is subcritical for $\A$. We denote by $L[\A]_{m,n}^{[k]}$  (resp by $L[\A]_{m,n}^{[\geq k]}$)
the number of elements of $L[\A]$ of size $m$ which have $k$ (resp. at least $k$) $L$-restrictions, 
and by $L[\A]_m$ the number of elements of~$L[\A]$ of size $m$. Then,
\[\lim_{m\rightarrow\infty}\frac{L[\A]_{m,n}^{[\geq k]}}{A_m} \sim \lim_{m\rightarrow\infty}\frac{L[\A]_{m,n}^{[k]}}{A_m} \sim \frac{d}{n^k},\]
when $n$ tends to infinity, and $d$ is a constant.
\end{thm}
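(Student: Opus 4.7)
The plan is to transport Kozik's proof of Theorem~\ref{thm:Jakub} to the associative setting, replacing the Catalan generating function $T(z)$ with the associative tree generating function $A(z)$ throughout. The necessary analytic ingredients carry over directly: by equation~(\ref{eq:assocGF}), $A(z)$ admits a square root singular expansion at $\alpha_n=(3-2\sqrt{2})/(2n)$ with $A(\alpha_n)=\sqrt{2}-1$. Thus Lemma~\ref{lem:limprob} lets us reduce $\lim_{m\to\infty}L[\A]_{m,n}^{[k]}/A_m$ to a comparison of sub-leading coefficients in singular expansions, and a subsequent asymptotic expansion in $1/n$ extracts the claimed $d/n^k$ behaviour.

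The first step is to write the generating function of $L[\A]$. If $s(x,y)$ denotes the bivariate generating function of $L$ with $x$ marking pattern leaves and $y$ marking placeholders, then the generating function of $L[\A]$ is the composition $s(2nz,A(z))$, obtained by substituting $2n$ literals for each pattern leaf and an arbitrary associative subtree for each placeholder. A minor refinement is needed to respect the label alternation constraint at the interface between the pattern and the plugged subtrees, which I would handle by splitting $A=\hat{A}+\check{A}-2nz$ and using the appropriate component at each placeholder depending on the label of its parent in the pattern. The out-degree $\neq 1$ hypothesis on $L$ ensures the composed object is a valid associative tree, while subcriticality guarantees that $s$ is analytic in a neighbourhood of $(\alpha_n,\sqrt{2}-1)$, so $s(2nz,A(z))$ inherits the square root singularity of $A(z)$ at $\alpha_n$.

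The second step accounts for the combinatorial cost of a restriction. Each repetition forces two pattern leaves to carry the same variable, replacing a factor $(2n)^2$ by $2n$; each essential-variable constraint pins a pattern leaf to a prescribed literal, replacing $2n$ by $2$. Summing over the combinatorial possibilities for $k$ imposed restrictions, the generating function of trees with exactly $k$ $L$-restrictions takes the form $n^{-k}\cdot\tau_k(z,A(z))$, where $\tau_k$ is a finite sum of partial substitutions of $s$, still analytic on the relevant polydisc. Applying Lemma~\ref{lem:limprob} with $A(z)$ as reference yields $L[\A]_{m,n}^{[k]}/A_m\sim d/n^k$ for a finite constant $d$. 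The $[\geq k]$ version follows since trees with exactly $j>k$ restrictions contribute $O(n^{-j})$, and summing these yields only an $O(n^{-k-1})$ correction to the leading $d/n^k$ term.

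The main obstacle will be the bookkeeping of the $\land/\lor$ alternation constraint during the composition $L[\A]$, since this is the one structural feature of associative trees without analogue in Kozik's binary plane setting. Once this is encoded by splitting $A$ into $\hat{A}$ and $\check{A}$ and refining the pattern accordingly, the subcriticality condition makes the singular analysis entirely analogous to the binary plane case, and the $(2n)^{-k}$ combinatorial weight of the restrictions delivers the asymptotic behaviour stated in the theorem.
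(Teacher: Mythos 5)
Your proposal follows essentially the same route as the paper: count the leaf-labellings that produce $k$ restrictions to extract the $n^{-k}$ factor, and use subcriticality of the pattern for $\A$ to show that the resulting generating function keeps the square-root singularity of $A(z)$ at $\alpha_n$, so that the ratio lemma (Lemma~\ref{lem:limprob}) applies. The only place where the paper is more explicit is the combinatorial bookkeeping: the labelling count involves a polynomial weight $w_{v,k}(l)$ in the number $l$ of pattern leaves (coming from Stirling numbers for the repetition classes), which on the generating-function side becomes a finite linear combination of derivatives $x^j\partial_x^j s(x,y)$ --- precisely the ``partial substitutions'' you allude to, and handled in the paper by Proposition~\ref{prop:forallcases}.
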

The proof of the generalisation works analogously to the one of Theorem~\ref{thm:Jakub},
still we will state the main ideas as they will be useful in the following.

Let $\tilde{\A}$ be the family of associative trees with leaves unlabelled, and let $t \in \tilde{L}[\tilde{\A}]_m$ with $l$ $L$-pattern leaves. Further, we fix the set $\V$ of essential variables and denote by $v$ the cardinality of this set, $v=|\V|$. For any $r\leq k$, the number of different
 leaf-labellings of $t$ which give $r$ $L$-repetitions and $k$ $L$-restrictions is:
\[\left\{\begin{matrix}l\\l-r\end{matrix}\right\} \begin{pmatrix}v\\k-r\end{pmatrix}
 (l-r)^{\underline{k-r}}\, (n-v)^{\underline{l-r-(k-r)}} \,n^{m-l}
 2^m,\]
where $x^{\underline{y}}=x(x-1)\ldots(x-y+1)$ and 
$\left\{\begin{matrix}y\\x\end{matrix}\right\}$
are the Stirling numbers of second
kind\footnote{The Stirling number of second kind
 $\left\{\begin{matrix}x\\y\end{matrix}\right\}$ counts the number of
 ways to partition a set of $x$ elements into $y$ non-empty sets.}.
In this formula, the different factors represent, from left to right:
\begin{itemize}
\item[-] the number of partitions of the $L$-pattern leaves into $l-r$ classes (leaves in the same class will be labelled by the same variable),
\item[-] the number of different choices for the $k-r$ essential variables that appear in the $L$-pattern leaves,
\item[-] the number of different assignments of these essential variables to the $l-r$ classes of the first term,
\item[-] the number of assignments of non-essential variables to the remaining classes of the $L$-pattern leaves,
\item[-] the number of assignments of variables to the leaves that are not $L$-pattern leaves,
\item[-] the number of ways to distribute the negations.
\end{itemize}

The following proposition is immediate:
\begin{prop}\label{Jakub-labellings}
Given an associative tree $t\in \tilde{L}[\tilde{\A}]_m$
with leaves unlabelled, the number of leaf-labellings of $t$ which make it have $k$ $L$-restrictions is:
\[(n-v)^{\underline{l-k}}\,n^{m-l}\, 2^m w_{v,k}(l),\]
where $w_{v,k}(l)=\sum_{r=0}^k
\left\{\begin{matrix}l\\l-r\end{matrix}\right\}
 \begin{pmatrix}v\\k-r\end{pmatrix} (l-r)^{\underline{k-r}}$ is a polynomial in $l$.
\end{prop}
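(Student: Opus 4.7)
The proof amounts to summing the per-$r$ formula displayed just above the proposition over all admissible values of $r$. First, I would argue that every leaf-labelling of $t$ with exactly $k$ $L$-restrictions has a well-defined number $r$ of $L$-repetitions, and by the definition of restrictions (repetitions plus the number of essential variables appearing in $L$-pattern leaves, which is non-negative), we have $0 \le r \le k$. Hence the set of such labellings partitions according to $r$, and the total count is the sum over $r \in \{0,1,\dots,k\}$ of the per-$r$ count already displayed in the text.

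Next comes a purely algebraic step. In the per-$r$ formula, the falling factorial exponent satisfies $l - r - (k - r) = l - k$, which is independent of $r$. Consequently the factors $(n-v)^{\underline{l-k}}$, $n^{m-l}$, and $2^m$ can be pulled out of the sum over $r$, leaving exactly the expression defining $w_{v,k}(l)$ and yielding the claimed product form.

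Finally, to establish that $w_{v,k}(l)$ is a polynomial in $l$, I note that for each fixed $r \le k$ the Stirling number $\left\{\begin{matrix} l \\ l-r \end{matrix}\right\}$ is a polynomial in $l$ of degree $2r$ (a classical identity), $\binom{v}{k-r}$ is constant in $l$, and $(l-r)^{\underline{k-r}}$ is a polynomial in $l$ of degree $k-r$. A finite sum of polynomials is a polynomial, whence the claim. There is no real obstacle here: the heavy combinatorial accounting was already carried out in the paragraph preceding the proposition (counting partitions of pattern leaves, choices of essential variables, assignments of variables to classes, assignments to non-pattern leaves, and distributions of negations), so the proof reduces to this summation and repackaging.
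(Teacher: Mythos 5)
Your proposal is correct and matches the paper's treatment: the paper simply displays the per-$r$ count and declares the proposition ``immediate,'' which is precisely the summation over $r\in\{0,\dots,k\}$ and factoring of the $r$-independent terms $(n-v)^{\underline{l-k}}$, $n^{m-l}$, $2^m$ that you carry out. Your added justification that $\left\{\begin{matrix}l\\l-r\end{matrix}\right\}$ is a polynomial in $l$ of degree $2r$ is a correct and welcome elaboration of a point the paper leaves implicit.
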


In  \cite{Kozik} the following proposition is proved for binary trees and patterns (cf. \cite[Lemma 2.7]{Kozik}), but in fact the proof does not rely on binarity and hence the proposition holds for patterns and trees of arbitrary degree. 

\begin{prop}\label{prop:forallcases}
Let $\T$ be a set of trees whose generating function $t(z)=\sum t_m z^m$ has a unique dominating singularity $\rho$ in $\R^+$ of the square root type. Let $\tilde{L}$ be an unambiguous pattern language, subcritical for $\T$. Let $\tilde{L}[\T](m,l)$ denote the number of trees from $\tilde{L}[\T]$ of size $m$
with exactly $l$ pattern leaves. Finally, let $w(l)$ be a non zero polynolmial of degree $\delta$. Then,
\[\lim_{n\rightarrow \infty} \frac{\sum_{l \geq 0}\tilde{L}[\T](m,l)w(l)}{t_m}=c_w,\]
for some non-negative real $c_w$.

Moreover, if $w(l)$ has non-negative values and is positive at some point $l_0$, and if $L$ contains a pattern with $l_0$ non pattern leaves and at least one placeholder, then $c_w\neq 0$.
\end{prop}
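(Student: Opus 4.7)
The plan is to pass to generating functions and to exploit the subcriticality assumption via standard singularity analysis à la Flajolet--Odlyzko.

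Let $s(x,y)$ be the bivariate generating function of $\tilde{L}$, with $x$ marking pattern leaves and $y$ marking placeholders. Since the size of a tree in $\tilde{L}[\T]$ is the number of its pattern leaves plus the total size of the trees plugged into its placeholders, the bivariate generating function of $\tilde{L}[\T]$ with $z$ tracking size and $x$ tracking pattern leaves is
\begin{equation*}
F(x,z) = s(xz,\, t(z)), \qquad \text{so that} \qquad \tilde{L}[\T](m,l) = [z^m x^l] F(x,z).
\end{equation*}
Writing $w(l)$ of degree $\delta$ in the falling-factorial basis yields a differential polynomial $W$ of degree $\delta$ in $x\partial_x$ such that $W(x\partial_x) x^l\bigm|_{x=1} = w(l)$, and hence
\begin{equation*}
\sum_{l \geq 0} \tilde{L}[\T](m,l)\, w(l) = [z^m]\, G(z), \qquad G(z) := \bigl(W(x\partial_x) F(x,z)\bigr)\bigm|_{x=1}.
\end{equation*}

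Next I would analyse the singular behaviour of $G$ at $z=\rho$. Subcriticality guarantees that $s$, and hence all its partial derivatives, is analytic on a neighbourhood of $(\rho, t(\rho))$. As $F$ depends on $x$ only through $u = xz$, each iterate $(x\partial_x)^k F(x,z)\bigm|_{x=1}$ is a polynomial combination of the functions $z^j (\partial_u^j s)(z, t(z))$ for $0 \leq j \leq k$. Each such summand is the composition of an analytic bivariate function with $(z, t(z))$, so the square-root expansion $t(z) = t(\rho) - b_T\sqrt{1-z/\rho} + O(1-z/\rho)$ propagates; summing yields
\begin{equation*}
G(z) = A_w - B_w\, \sqrt{1 - z/\rho} + O(1-z/\rho).
\end{equation*}
The transfer theorem of \cite{FlSe} gives $[z^m] G(z) \sim B_w\, m^{-3/2} \rho^{-m} / (2\sqrt{\pi})$ and, applied to $t$, gives $t_m \sim b_T\, m^{-3/2} \rho^{-m} / (2\sqrt{\pi})$; the ratio thus converges (as $m \to \infty$) to $c_w = B_w / b_T$. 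Non-negativity under $w \geq 0$ is immediate since every term of the sum is then non-negative.

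For the strict positivity claim, I would bound the sum from below by its single term at $l = l_0$. Writing $\sigma(y) := [u^{l_0}] s(u,y)$, we have $[x^{l_0}] F(x,z) = z^{l_0}\, \sigma(t(z))$. The hypothesis that $\tilde{L}$ contains a pattern with $l_0$ pattern leaves and at least one placeholder means that $\sigma(y)$ has a strictly positive coefficient of some $y^k$ with $k \geq 1$; combined with $t(\rho) > 0$ this gives $\sigma'(t(\rho)) > 0$. The same singular analysis applied to $z^{l_0}\sigma(t(z))$ then yields $\lim_m \tilde{L}[\T](m, l_0)/t_m = \sigma'(t(\rho))\, \rho^{l_0} > 0$, and therefore $c_w \geq w(l_0)\,\sigma'(t(\rho))\, \rho^{l_0} > 0$.

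The main technical obstacle is the bookkeeping behind the propagation step: one must verify that $(x\partial_x)^k F(x,z)\bigm|_{x=1}$ can always be written as a combination of values of $s$ and its partial derivatives on the analyticity domain furnished by subcriticality, so that each composition $(\partial_u^j s)(z,t(z))$ cleanly inherits the square-root singularity type of $t$ without new singularities appearing. Once this is in hand, the rest is a routine application of singularity analysis, following the blueprint of Kozik's argument in the binary case.
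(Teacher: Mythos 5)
Your proposal is correct and follows essentially the same route as the paper: the paper itself defers this statement to Kozik's Lemma~2.7, but its proof of the commutative analogue (Proposition~\ref{prop:forcommcases}) is exactly your argument --- expand $w$ in the falling-factorial basis, realise the weighted sum as a linear combination of $x^j\partial_x^j$ applied to the pattern generating function evaluated at $(z,t(z))$, invoke subcriticality to propagate the square-root singularity, and transfer. Your treatment of the positivity claim via the single term $l=l_0$ and $\sigma'(t(\rho))>0$ is also the intended one.
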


Thanks to those propositions, we can now prove Theorem \ref{thm:Jakub-assoc} to associative trees:
\begin{proof}[Proof of Theorem~\ref{thm:Jakub-assoc}]
Let $L$ be an associative pattern and $\tilde{A}$ the family 
of trees from $\A$ with leaves unlabelled. 
We have, thanks to Proposition \ref{Jakub-labellings}:
\begin{equation*}
\frac{L[\T]_{m,n}^{[k]}}{A_m}=\frac{2^m \sum_{l \geq 0}\tilde{L}[\tilde{\T}](m,l)w_{k,v}(l)(n-v)^{\underline{l-k}}n^{m-l}}{A_m};
\end{equation*}
and this implies:
\begin{equation*}
\frac{L[\T]_{m,n}^{[k]}}{A_m}\leq \frac{2^m \sum_{l\geq 0}\tilde{L}[\tilde{\T}](m,l)w_{k,v}(l)n^{l-k}n^{m-l}}{(2n)^m\tilde{A}_m}.
\end{equation*}
Thanks to Proposition \ref{prop:forallcases}, we get:
\begin{equation*}
\lim_{m\rightarrow \infty}\frac{L[\T]_{m,n}^{[k]}}{A_{m}}\leq \lim_{m\rightarrow \infty}\frac{2^m \sum_{l\geq 0}L[\T](m,l)w_{k,v}(l)n^{m-k}}{(2n)^m\tilde{A}_{m}} \sim \frac{c_{k,v}}{n^k},
\end{equation*}
when $n$ tends to infinity. Moreover, we can check that $c_{k,v}$ is positive.
A lower bound can be proven analogously, the proof for the binary case is given in \cite{Kozik}. It follows that
\[\lim_{m\rightarrow\infty}\frac{L[\T]_{m,n}^{[k]}}{A_{m}} \sim
\frac{d}{n^k},\]
when $n$ tends to infinity. Moreover, we can see that:
\begin{equation*}
\frac{L[\T]_{m,n}^{[\geq k]}}{A_{m}}\leq \frac{2^m \sum_{l\geq 0}\tilde{L}[\tilde{\T}](m,l)w_{k,v}(l)n^{m-k}}{A_{m}},
\end{equation*}
and since \[\lim_{m\rightarrow\infty}\frac{L[\T]_{m,n}^{[k]}}{A_{m}} \leq \lim_{m\rightarrow\infty}\frac{L[\T]_{m,n}^{[\geq k]}}{A_{m}},\]
the theorem is proven.
\end{proof}

\subsubsection{Associative tautologies.}

\begin{prop}\label{st-assoc}
In the associative model, asymptotically when $n$ tends to infinity,
almost all tautologies are simple tautologies.
\end{prop}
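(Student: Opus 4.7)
The plan is to follow the structure of the binary plane case (Theorem~\ref{catalan-constant} together with Proposition~\ref{prop-trick}), now invoking the associative generalisation of Kozik's theorem, namely Theorem~\ref{thm:Jakub-assoc}. The target is to show that tautologies which are not simple tautologies contribute at most $O(1/n^2)$ to the ratio of trees of size $m$, while simple tautologies will contribute $\Theta(1/n)$; the proposition then follows.

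First, I would introduce an associative analogue of Kozik's pattern language~$S$. Because of label alternation in associative trees, the top $\lor$-stratum of an $\lor$-rooted tree has a one-level structure: the root is $\lor$ and each child is either a literal or an $\land$-rooted subtree. Define a pattern language $L$ consisting of a single pattern leaf $\bullet$, or of an $\lor$-rooted tree of out-degree $\geq 2$ whose children are each $\bullet$ or $\square$, with the placeholders intended to be filled by $\land$-rooted associative trees. This language is clearly unambiguous, and its subcriticality for $\A$ follows from the fact that its bivariate generating function is analytic in a neighbourhood of $(\alpha_n,A(\alpha_n))$. Every $\lor$-rooted associative tree has a unique $L[\A]$-decomposition obtained by reading off its top $\lor$-stratum.

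The key combinatorial observations are then: (i)~an $\lor$-rooted tree is a simple tautology if and only if two of its top-level pattern leaves are complementary literals of the same variable; (ii)~if an $\lor$-rooted tree is a tautology but is not a simple tautology, then its $L$-decomposition has at least~$2$ $L$-restrictions. Granting~(ii), Theorem~\ref{thm:Jakub-assoc} immediately yields the desired $O(1/n^2)$ bound for non-simple $\lor$-rooted tautologies. For $\land$-rooted tautologies, I would argue directly: such a tree is $True$ iff every one of its children is $True$, and since a single leaf is never $True$, every child must be an $\lor$-rooted tautology; a generating function manipulation analogous to the one leading to Equation~\eqref{eq:assocGF}, combined with the bound on $\lor$-rooted tautologies, then yields an $O(1/n^2)$ contribution for $\land$-rooted tautologies as well.

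The main obstacle is claim~(ii): proving that a non-simple $\lor$-rooted tautology must have at least $2$ $L$-restrictions. This is the associative counterpart of Kozik's central observation in the binary case. The idea is that without a pair of complementary literals among the top-level children, every assignment of the essential variables of~$f=True$ must be handled by at least one $\land$-rooted placeholder subtree evaluating to $True$; tracking how the essential variables are forced to appear in the pattern part (counted with multiplicity through restrictions) then gives the required count. Making this case analysis precise, mirroring Kozik's argument while accommodating arbitrary out-degrees, will be the main technical work of the proof.
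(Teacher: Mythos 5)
Your overall strategy (reduce to counting restrictions, invoke Theorem~\ref{thm:Jakub-assoc}, conclude that non-simple tautologies are $\O(1/n^2)$) is the right one and is what the paper does, but your claim~(ii) is false for the pattern language you propose, and this is not a matter of ``making the case analysis precise'' --- it is a genuine gap. Take the full disjunctive normal form on two variables, $t=(x\land y)\lor(x\land\bar y)\lor(\bar x\land y)\lor(\bar x\land\bar y)$, which is a stratified $\lor$-rooted associative tree computing $True$ and is not a simple tautology. In your language $L$ every child of the root is a placeholder (an $\land$-rooted subtree), so $t$ has \emph{zero} $L$-pattern leaves and hence zero $L$-restrictions; Theorem~\ref{thm:Jakub-assoc} then gives you no bound at all on such trees. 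The underlying problem is that your pattern stops at the top $\lor$-stratum and therefore lacks the structural property that drives the whole argument: setting all pattern leaves to $False$ must force the entire tree to evaluate to $False$. With your $L$ this fails (in $(x\land y)\lor z$, setting $z$ to $False$ decides nothing), so repetitions among the pattern leaves are not forced to exist in a non-simple tautology.

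The paper's proof fixes exactly this by using the recursive pattern $R=\{\hat N,\check N\}$ of \eqref{eq:Rpattern}, which at an $\lor$-node keeps \emph{all} children in the pattern but at an $\land$-node keeps the \emph{leftmost} child in the pattern and makes the remaining children placeholders. This guarantees the $False$-forcing property (an easy induction), hence every tautology has at least one $R$-restriction, necessarily a repetition since $True$ has no essential variables. The proof then works with the composed pattern $R[R]$ and shows that a tautology with exactly one $R[R]$-restriction must have an $x/\bar x$ repetition occurring among the $R$-pattern leaves with both occurrences joined to the root by $\lor$-only paths (any intervening $\land$-node would let one assign enough pattern leaves to $False$ to falsify the tree), i.e.\ it is a simple tautology; trees with at least two $R[R]$-restrictions are negligible by Theorem~\ref{thm:Jakub-assoc} and Lemma~\ref{assoc-subcritical}. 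Your separate treatment of $\land$-rooted tautologies is unnecessary once the pattern descends into the tree, and in the paper that case is subsumed by the same argument. To repair your proof you would have to replace your one-level language by a pattern with the $False$-forcing property and iterate it once, at which point you are reconstructing the paper's argument.
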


The proof is very similar to the proof of the binary case (see \cite{Kozik}).
 First we need to introduce patterns:

\begin{equation}\label{eq:Rpattern}
\left\{
\begin{array}{l}
\hat{N}=\bullet | \check{N}\land \boxempty | \check{N}\land \boxempty
\land \boxempty | \ldots \\
\check{N}=\bullet | \hat{N}\lor \hat{N} | \hat{N}\lor \hat{N} \lor
\hat{N} | \ldots \\
R= \{\hat{N},\check{N}\},
\end{array}
\right.
\end{equation}
where $R= \{\hat{N},\check{N}\}$ means we start with either an $\lor$-node or an $\land$-node, and use the according pattern $\check{N}$ or $\hat{N}$, and then use both partial patterns alternatingly until the process finishes. Then~$R$ is an unambigous pattern language. 

\begin{lem}\label{assoc-subcritical}
The pattern $R$ is subcritical for associative trees.
\end{lem}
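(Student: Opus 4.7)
The plan is to read off the bivariate generating function of the pattern language $R$, write it in closed form, locate its singularities, and then compare with the numerical value $A(\alpha_n)=\sqrt{2}-1$ computed in (\ref{eq:assocGF}).

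First I would introduce $\hat{n}(x,y)$ (resp.\ $\check{n}(x,y)$) for the generating function of patterns of type $\hat N$ (resp.\ $\check N$), where $x$ marks pattern leaves $\bullet$ and $y$ marks placeholders $\boxempty$. Reading the specification (\ref{eq:Rpattern}) directly, an $\hat{N}$-pattern is either a single pattern leaf, or an $\land$-node with exactly one $\check{N}$-child and $k\geq 1$ placeholder children. This gives
\[
\hat{n}(x,y)=x+\check{n}(x,y)\sum_{k\geq 1}y^{k}=x+\check{n}(x,y)\,\frac{y}{1-y},
\]
and, by the $\land\leftrightarrow\lor$ symmetry of the grammar, the analogous equation with $\hat{n}$ and $\check{n}$ swapped. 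Hence $\hat{n}=\check{n}=:n$, and solving the resulting equation $n=x+n\,y/(1-y)$ yields the rational closed form
\[
n(x,y)=\frac{x\,(1-y)}{1-2y}.
\]
The generating function of $R$ itself is then $s(x,y)=\hat n(x,y)+\check n(x,y)=2n(x,y)$ (less the double count of single leaves if desired, but that does not affect analyticity).

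Next I would locate the singularities of $s(x,y)$. Since $s$ is a rational function that is entire in $x$ and has its only pole in $y$ at $y=1/2$, the function $s(x,y)$ is analytic on the whole polydisc $\{(x,y):|y|<1/2\}$. To check subcriticality for $\A$, recall from (\ref{eq:assocGF}) that the dominant singularity of $A(z)$ is $\alpha_n=(3-2\sqrt 2)/(2n)$, that it is of the square root type, and that $A(\alpha_n)=\sqrt{2}-1$. Since
\[
\sqrt{2}-1<\tfrac{1}{2},
\]
one can pick $\varepsilon>0$ so small that $\sqrt{2}-1+\varepsilon<1/2$. Then $s(x,y)$ is analytic on $\{(x,y):|x|\leq\alpha_n+\varepsilon,\;|y|\leq A(\alpha_n)+\varepsilon\}$, which is exactly the subcriticality condition.

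The only real point of care in this argument is writing down the correct bivariate specification for $R$; once that is done, the rest is an elementary computation, so I do not anticipate a serious obstacle. The numerical inequality $\sqrt{2}-1<1/2$, which is what makes subcriticality go through, is precisely what one should expect, since $A(\alpha_n)$ lies safely below the radius of analyticity $1/2$ of the pattern's inner series $\sum_{k\geq 1}y^{k}$.
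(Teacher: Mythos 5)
There is a genuine gap, and it occurs at the very step you flag as ``the only real point of care'': the bivariate specification of $R$. The grammar \eqref{eq:Rpattern} is \emph{not} symmetric under swapping $\land$ and $\lor$. The production for $\hat{N}$ is $\bullet \mid \check{N}\land\boxempty \mid \check{N}\land\boxempty\land\boxempty\mid\ldots$ (one $\check{N}$-child, all other children placeholders), but the production for $\check{N}$ is $\bullet\mid\hat{N}\lor\hat{N}\mid\hat{N}\lor\hat{N}\lor\hat{N}\mid\ldots$ (\emph{all} children are $\hat{N}$-patterns, no placeholders at all). This asymmetry is essential to the pattern's purpose: it is what makes ``set all $R$-pattern leaves to $False$'' force the whole tree to $False$. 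Consequently your second equation should be
\[
\check{n}(x,y)=x+\sum_{k\geq 2}\hat{n}(x,y)^{k}=x+\frac{\hat{n}(x,y)^{2}}{1-\hat{n}(x,y)},
\]
not $x+\hat{n}\,y/(1-y)$. In particular $\hat{n}\neq\check{n}$, the system is genuinely nonlinear, and the solution is algebraic with a square-root branch point, namely (in the labelled normalisation $x\mapsto 2nx$ used by the paper) $\hat{p}(x,y)=\tfrac12\bigl(2nx-y-1-\sqrt{(2nx-y-1)^2-8nx}\bigr)$. Your rational closed form $x(1-y)/(1-2y)$, the claim that the only singularity is a pole at $y=1/2$, and the concluding check $\sqrt{2}-1<1/2$ are therefore all artifacts of the misread grammar.

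Your overall strategy — solve for the generating function explicitly, locate its singularities, and verify that the point $(\alpha_n, A(\alpha_n))$ lies strictly inside the domain of analyticity — is exactly the paper's. But with the correct system the condition to verify is not a comparison with $1/2$; it is that the branch-point locus $(2nx-1-y)^2=8nx$ is avoided, i.e. that $(2n\alpha_n-1-A(\alpha_n))^2=(3-3\sqrt2)^2=9(3-2\sqrt2)$ exceeds $8n\alpha_n=4(3-2\sqrt2)$ (and then, by positivity of coefficients, on the whole segment $0\le z\le\alpha_n$). That is the computation you would need to supply to close the argument.
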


\begin{proof}
The generating function $p(x,y)$ of the labelled pattern $R$ is given by
\[p(x,y)=\hat{p}(x,y)+\check{p}(x,y)-2nx,\]
where $\hat{p}(x,y)$ (resp. $\check{p}(x,y)$) is the generating
function of the partial labelled patterns $\hat{N}$
(resp. $\check{N}$). 
These two generating functions satisfy the following system:
\begin{equation*}
\left\{
\begin{array}{l}
\check{p}(x,y)=2nx+\frac{\hat{p}(x,y)^2}{1-\hat{p}(x,y)}\\
\hat{p}(x,y)=2nx+\frac{y}{1-y}\check{p}(x,y).
\end{array}
\right.
\end{equation*}

Solving this system, we get
\begin{equation*}
\hat{p}(x,y)=\frac{1}{2}\left(2nx-y-1-\sqrt{(2nx-y-1)^2-8nx}\right).
\end{equation*}
Recall that (cf. \eqref{eq:assocGF})
\[A(z)=\frac{1}{2}\left(1-2nz-\sqrt{1-12nz+4 n^2 z^2}\right),\]
\[A(\alpha_n)=\sqrt{2}-1 \text{ and } \alpha_n=\frac{3-2\sqrt{2}}{2n}.\]
To prove that $\alpha_n$ is the dominant singularity of $p(z,A(z))$,
 it is enough to prove that it is the dominant singularity of 
$\hat{p}(z,A(z))$ and $\check{p}(z,A(z))$. Actually,
 $\hat{p}(x,y)$ and $\check{p}(x,y)$ are analytic in 
$\IC^2\setminus \{(x,y) \:|\: (2nx-1-y)^2=8nx\}$. For big enough~$n$,
 $(2n\alpha_n-\sqrt{2})^2=9(3-2\sqrt{2}) > 8n\alpha_n=4(3-2\sqrt{2})$, 
and due to non-negative coefficients the inequality holds for all
 $z\in \IR, 0\leq z\leq \alpha_n$. Thus the dominant singularity
 of $\hat{p}(z,A(z))$ is $\alpha_n$ and $R$ is subcritical for associative trees.
\end{proof}

\begin{rmq}
The $R$-pattern has an interesting property: if we set all the $R$-pattern leaves of a tree to $False$, then, the whole tree itself computes $False$. This can be checked by induction on the size of the
tree. If the pattern is only a leaf, it returns $False$. If the root is an $\lor$-node, then all subtrees of the root are patterns returning $False$ by the induction hypothesis. If the root is an $\land$-node, the leftmost subtree is a pattern returning $False$ by the induction hypothesis. Thus the whole tree computes $False$ in all cases. This property is the key point of the following proof. 
\end{rmq}

\begin{rmq}
The pattern $R$ is a generalisation of the pattern $N=\bullet|N\lor N|N \land \boxempty$, defined in \cite{Kozik} to handle the proof in the binary plane case. Note that $R[\A]=\A$, and we can find the unique element from $L[\A]$ which corresponds to a tree $A\in\A$ by starting at the root of $A$ and finding the pattern leaves by traversing the tree top-to-bottom.
\end{rmq}

\begin{proof}[Proof of Proposition \ref{st-assoc}]
Let us consider $t$ a tautology with exactly one $R[R]$-restriction (cf. Definition~\ref{def:pattern}). This restriction has to be a repetition, since a tautology does not contain essential variables. 

If the repetition is of the kind $x/x$, then we can assign all the $R$-pattern leaves to $False$, and with this assignment the whole tree computes $False$, which is impossible.

Thus the repetition has to be an $x/\bar{x}$ repetition. Let us first assume that the repetition does not appear among the $R$-pattern leaves. Thus we can assign all those leaves to $False$, and then the whole term computes $False$, which is impossible. Hence, the repetition must occur in the
 $R$-pattern leaves. Let us assume that there is a node $\nu$ labelled by $\land$ on one of the paths from the leaves labelled by $x$ and $\bar{x}$ to the root of the tree.
Then, the subtree rooted at $\nu$ has shape $t_1\land t_2 \land \ldots \land t_s$ with $s\geq 2$. Let us assume that $x$ (or $\bar{x}$) appears in $t_j$. Then, we can assign all the $R[R]$-pattern leaves of the other subtrees $(t_i)_{i\neq j}$, and all the $R[R]$-pattern leaves of the whole tree except
 those in the subtree rooted at $\nu$ to $False$. 
This makes the whole tree compute $False$, which is impossible.

Thus, $x$ and $\bar{x}$ are linked to the root by an $\lor$-only-path. As the trees are stratified,
the only possibility for $t$ is to be a simple tautology.
Thus every tree with exactly one $R[R]$-restriction computing $True$ is a simple tautology.

Moreover, there are no trees computing $True$ without $R[R]$-restrictions,
and the number of trees computing $True$ with at least two $R[R]$-restrictions
is negligible in comparison to the number of simple tautologies by Theorem~\ref{thm:Jakub-assoc} which can be applied thanks to Lemma~\ref{assoc-subcritical}.
\end{proof}

We are now able to prove Theorem \ref{thm:assoc-constant} by counting associative simple tautologies.
\begin{proof}[Proof of Theorem \ref{thm:assoc-constant}]
Let $\widetilde{ST}^{x}$ be the generating function counting the number of simple tautologies
realised by $x$ and such that $x$ and $\bar{x}$ appear only once in the first generation (i.e. at depth~1).
Then,
\[\widetilde{ST}^{x}(z)=\sum_{l\geq 2}l(l-1)(A(z)-2z)^{l-2}.\]
If $x$ or $\bar{x}$ appear at least twice in the first generation, the tree has at least two $R[R]$-restrictions, and the set of such trees is negligibly small compared to the set counted by $\widetilde{ST}^{x}$.
Thus, asymptotically, when $n$ tends to infinity, $\widetilde{ST}^{x}(z)$
counts the set of simple tautologies realised by $x$.

Finally, note that in view of Theorem~\ref{thm:Jakub-assoc}, the assertion of Proposition~\ref{prop-trick} extends to the associative case. So a Maple computation giving
\[\lim_{z\rightarrow \alpha_n}\frac{G'(z)}{A'(z)} \sim \frac{51-36\sqrt{2}}{n}\]
completes the proof of Theorem~\ref{thm:assoc-constant}.
\end{proof}

\subsection{The binary commutative model.}\label{sec:comm-tautology}

The generating function of binary commutative And/Or trees, $C(z)=\sum_{m}C_m z^m$, is given in \eqref{eq:C(z)}, and we denote by $\gamma_n$ the dominant positive singularity of $C(z)$. To compute $\gamma_n$ and $C(\gamma_n)$ we need to solve the singularity system (see~\cite[Chapter 2]{Drmota09} for details):
\begin{equation*}
\left\{
\begin{array}{l}
y = 2nz+y^2+\Gamma(z)\\
1 = 2y.
\end{array}
\right.
\end{equation*}
$\Gamma(z)$ is analytic for $|z|\leq \gamma_{n}$.
We obtain $C(\gamma_n)=\frac{1}{2}$ and $\gamma_n=\frac{1}{8n}-\frac{C(z^2)}{2n}$.
As $C(z)=2nz+\O(z^2)$, by inserting into the equation we can further derive
$\gamma=\frac{1}{8n}\left(1-\frac{1}{8n}\right)+\O\left(\frac{1}{n^3}\right)$.{}
As we need more terms in some of our calculations,
we do a more refined analysis with Maple and further obtain
\begin{equation}\label{eq:gamma}
\gamma_n=\frac{1}{8n}\left(1-\frac{1}{8n}+\frac{7}{256n^2}\right)+\O\left(\frac{1}{n^4}\right).
\end{equation}

\begin{thm}\label{thm:commbinary}
The limiting probability of the function $True$ in the binary commutative model,
$\IP_{m,n}^c(True)$, is given by
\[\lim_{m\to\infty}\IP_{m,n}^c(True)=\frac{641}{1024n}+\O\left(\frac{1}{n^2}\right).\]
\end{thm}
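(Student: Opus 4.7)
The strategy parallels the binary plane and associative cases. First, one must extend Kozik's theorem (Theorem~\ref{thm:Jakub}) to the binary commutative model. To this end we introduce a commutative version of Kozik's pattern, a candidate being
\[N = \bullet\ |\ N\lor N\ |\ N \land \boxempty,\]
interpreted as an unordered pattern (so a $\lor$-node carries a multiset of two subpatterns, and an $\land$-node has one subpattern child and one placeholder child, with the two children regarded as an unordered pair). One verifies that $N$ is unambiguous and subcritical for $\C$ by computing its bivariate generating function $p(x,y)$ via a Polya-type functional equation (incorporating a $p(x^{2},y^{2})$ term to handle symmetries) and checking that the dominant singularity of $p(z,C(z))$ is $\gamma_{n}$. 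The counting argument of Proposition~\ref{Jakub-labellings} and Proposition~\ref{prop:forallcases} carries over almost verbatim, since both hinge only on the square-root nature of $C(z)$ at $\gamma_{n}$ together with a Stirling-number labelling count; the symmetry factors contribute only analytic perturbations.

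Once the commutative analogue of Theorem~\ref{thm:Jakub} is in place, the key property of $N$ (assigning $False$ to every $N$-pattern leaf forces the whole tree to $False$) applies verbatim and shows, exactly as in Proposition~\ref{st-assoc}, that asymptotically almost every commutative tautology is a simple tautology. Moreover, the commutative analogue of Proposition~\ref{prop-trick} reduces $\IP^{c}_{m,n}(True)$ to the limiting ratio of $n\cdot ST^{x}(z)$ up to an $O(1/n^{2})$ error.

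Next, I compute the generating function $ST^{x}(z)$ of commutative simple tautologies realised by~$x$ in the spirit of Section~\ref{sec:catalan-true}. Let $g_{x}(z)$ enumerate commutative trees with a leaf labelled $x$ joined to the root by a $\lor$-only path, and $\bar{g}_{x}(z)=C(z)-g_{x}(z)$ its complement. Decomposing at the root, and carefully handling the symmetry in the $\lor$ case, yields
\[\bar{g}_{x}(z) = (2n-1)z + \tfrac{1}{2}\bigl(C(z)^{2}+C(z^{2})\bigr) + \tfrac{1}{2}\bigl(\bar g_{x}(z)^{2}+\bar g_{x}(z^{2})\bigr),\]
and analogously, with $h_{x}(z)$ the generating function of trees $g_{x}\lor g_{\bar{x}}$,
\[\overline{ST}^{x}(z) = 2nz + \tfrac{1}{2}\bigl(C(z)^{2}+C(z^{2})\bigr) + \tfrac{1}{2}\bigl(\overline{ST}^{x}(z)^{2}+\overline{ST}^{x}(z^{2})\bigr) - h_{x}(z),\]
with $h_{x}(z)=g_{x}(z)g_{\bar x}(z)$ (no factor $2$ in the unordered setting, and self-pairing is irrelevant since $g_{x}\neq g_{\bar x}$). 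Since $\gamma_{n}\ll \sqrt{\gamma_{n}}$, the terms $C(z^{2}),\bar g_{x}(z^{2}),\overline{ST}^{x}(z^{2})$ are analytic at $\gamma_{n}$ and can be treated as explicit constants in the singular expansion.

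Finally, I extract the singular expansion of $ST^{x}(z)$ at $z=\gamma_{n}$ by the same singularity-system method used to derive~\eqref{eq:gamma}, then apply Lemma~\ref{lem:limprob} to obtain
\[\lim_{m\to\infty}\frac{n\,ST^{x}_{m}}{C_{m}} = n\lim_{z\to\gamma_{n}}\frac{(ST^{x})'(z)}{C'(z)},\]
and expand this ratio in powers of $1/n$ using~\eqref{eq:gamma} together with the value $C(\gamma_{n})=\tfrac12$ and its next-order corrections. The bookkeeping of the $1/n$ corrections to $\gamma_{n}$, $C(\gamma_{n})$, and the various auxiliary constants (all analytic at~$\gamma_{n}$) is the technical core of the proof; this is where the particular rational $\tfrac{641}{1024}$ emerges, after a Maple simplification analogous to the one yielding $51-36\sqrt 2$ in Theorem~\ref{thm:assoc-constant}. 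The error from neglecting $K_{2,m},\dots,K_{n,m}$ and tautologies with at least two $N$-restrictions is $O(1/n^{2})$ by the commutative analogue of Theorem~\ref{thm:Jakub}, which completes the proof.

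The main obstacle is not conceptual but arithmetic: unlike the plane and associative cases, here both the singularity $\gamma_{n}$ and the value $C(\gamma_{n})$ admit only implicit expansions in $1/n$, and the constant $\tfrac{641}{1024}$ is sensitive to the $1/n$ and $1/n^{2}$ corrections of~\eqref{eq:gamma}. One must therefore carry the Polya-type corrections $C(z^{2})$, $\bar g_{x}(z^{2})$, $\overline{ST}^{x}(z^{2})$ through the entire computation rather than discarding them as lower-order.
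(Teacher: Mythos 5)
Your computational core coincides with the paper's proof: the functional equations you write for $\bar g_x$ and $\overline{ST}^x$ are exactly \eqref{eq:commg} and \eqref{eq:commG} (including the absence of the factor $2$ in front of $g_x(z)g_{\bar x}(z)$), and the extraction of $\frac{641}{1024}$ via the expansion \eqref{eq:gamma}, the value $C(\gamma_n)=\frac12$, the analyticity of the $z\mapsto z^2$ terms at $\gamma_n$, and Lemma~\ref{lem:limprob} is precisely what the paper does.

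The gap is in your first step, the extension of Kozik's theorem. You propose to make the pattern $N$ itself unordered and to verify that it is subcritical for $\C$, ``checking that the dominant singularity of $p(z,C(z))$ is $\gamma_{n}$.'' This check fails. Subcriticality requires analyticity of the pattern generating function on $\{|x|\le\gamma_n+\epsilon,\ |y|\le C(\gamma_n)+\epsilon\}$ with $C(\gamma_n)=\frac12$; but solving $s=2nx+\frac12\bigl(s(x,y)^2+s(x^2,y^2)\bigr)+ys$ at $y=\frac12$ places the square-root singularity in $x$ near $\frac{1}{16n}$, strictly below $\gamma_n\sim\frac1{8n}$, so $p(z,C(z))$ becomes singular well before $\gamma_n$. (The same failure occurs for the plane pattern, which is why the paper remarks that subcriticality ``is not the case for commutative trees anymore.'') The paper's workaround is different from what you propose: it keeps the pattern \emph{plane} and half-embeds each commutative tree into the plane, obtaining an injection $\C\to N[\C]$ so that only the upper bound $C_m^{[k]}\le N[\C]_m^{[k]}$ is needed (accordingly Theorem~\ref{thm:Jakub-comm-bin} only asserts $\O(1/n^k)$, not an asymptotic equivalent); and it replaces full subcriticality by the coefficient-wise condition that each $A_l(y)$ in $\ell(x,y)=\sum_l A_l(y)x^l$ be subcritical (Lemma~\ref{lem:subcritical}), which suffices because the sum over $l$ truncates at $l=n-v+k$ where $(n-v)^{\underline{l-k}}$ vanishes --- this truncation is what makes Proposition~\ref{prop:forcommcases} go through. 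Your unordered pattern also leaves the unambiguity of $L[\T]$ unaddressed (a multiset $\lor$-node whose two subpatterns are isomorphic), another issue the plane-pattern-on-mobiles device avoids. Without one of these repairs, your reduction of tautologies to simple tautologies, and hence the $\O(1/n^2)$ error term, is unsupported.
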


To prove the theorem we will extend the method of pattern languages of Kozik to the commutative case.
We consider binary commutative trees, together with a \textit{half-embedding},
that is certain branches of the tree will be plane and some will stay non-plane.
We use the plane pattern language known from Section~\ref{sec:catalan-true} given by
\[N=\bullet|N\lor N|N \land \boxempty.\]
As $N$ is plane, it is unambiguous for any tree family.
A tree of $N[\C]$ is a "mobile", that is, the pattern-trees consisting of internal nodes
and $\bullet$ and $\boxempty$-leaves are plane,
while the trees substituted into the $\boxempty$-nodes are still non-plane trees. 

\begin{rmq}
While in the plane cases the considered pattern was subcritical for the non-leaf labelled family of trees,
this is not the case for commutative trees anymore. Therefore, the strategy will be slightly different as before.
\end{rmq}

\subsubsection{Generalisation of Kozik's theorem to commutative trees.}\label{sec:comm-generalisation}
As mentioned before, in the plane case, the pattern $N$ we considered fulfilled $N[\T]=\T$. For commutative trees, this is not the case. The proof of Theorem~\ref{thm:Jakub} relies completely on plane structures and subcriticality which is not given anymore. Still, Theorem~\ref{thm:Jakub} can be generalised to mobile structures. We will adapt it and its proof, relying on the sketch in Section~\ref{sec:assoc-generalisation}, but we will need additional arguments. Note that definitions concerning the pattern language, such as restrictions, are valid in the mobile case.

\begin{thm}
\label{thm:Jakub-comm-bin}
Let $L$ be a labelled plane binary unambiguous pattern language with $\ell(x,y)${}
its generating function. Further assume that the coefficients $A_l(y)$,
given by 
\begin{equation}\label{eq:newcoeffs}
 \ell(x,y)=\sum_{l\geq0}\sum_{i\geq0}s_{i,l}y^ix^l=\sum_{l\geq0}A_l(y)x^l
\end{equation}
are subcritical for $C(z)$.\\
We denote by $L[\C]_{m,n}^{[k]}$ (resp. by $L[\C]_{m,n}^{[\geq k]}$)
the number of elements of $L[\C]$ of size $m$ which have $k$ (resp. at least $k$)
$L$-restrictions, and by $L[\C]_m$ the number of elements of $L[\C]$ of size $m$.\\
Then,
\[\lim_{m\rightarrow\infty}\frac{L[\C]_{m}^{[\geq k]}}{C_m}  =
\O\left(\frac1{n^k}\right)\text{ and } \lim_{m\rightarrow\infty}\frac{L[\C]_{m}^{[k]}}{C_m} =
\O\left(\frac1{n^k}\right)\]
when $n$ tends to infinity.
\end{thm}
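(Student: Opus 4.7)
My plan is to adapt the labelling-count approach of Theorems~\ref{thm:Jakub} and~\ref{thm:Jakub-assoc} to the mobile structures of $L[\C]$, where the pattern $L$ is plane but the subtrees plugged into its placeholders are non-plane. The first step will be to establish the analogue of Proposition~\ref{Jakub-labellings}: once the underlying mobile structure (unlabelled pattern plus labelled commutative subtrees) is fixed, the only remaining degrees of freedom are the labels on the $l$ pattern leaves, and the formula $(n-v)^{\underline{l-k}} 2^l w_{v,k}(l)$ (with $v\le k$ the number of essential variables) continues to count the labellings of these $l$ leaves giving exactly $k$ $L$-restrictions. Unlike in the plane case, no additional $n^{m-l}2^{m-l}$ factor arises for the non-pattern leaves, since these are already labelled inside the commutative subtrees counted by $C(z)$. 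This will yield
\[
L[\C]_m^{[k]} \;=\; [z^m]\sum_{l\ge 0}(n-v)^{\underline{l-k}}\,w_{v,k}(l)\,(2z)^l\,A_l(C(z)).
\]

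Next, I will extract the factor $n^{-k}$ via the bound $(n-v)^{\underline{l-k}}\le n^{l-k}$, so that
\[
L[\C]_m^{[k]} \;\le\; n^{-k}\,[z^m]\Psi_{k,n}(z),\qquad \Psi_{k,n}(z) \;:=\; \sum_{l\ge 0}w_{v,k}(l)\,(2nz)^l A_l(C(z)).
\]
Since $w_{v,k}$ is a polynomial in $l$, $\Psi_{k,n}$ is obtained from $\ell(x,C(z))$ by applying the polynomial differential operator $w_{v,k}(x\partial_x)$ and then setting $x=2nz$. The subcriticality assumption on the $A_l(y)$ guarantees that each $A_l(C(z))$ is analytic at $z=\gamma_n$ up to the square-root singularity inherited from $C(z)$, and this analytic structure is preserved under the differential operator. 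Hence $\Psi_{k,n}(z)$ inherits $\gamma_n$ as its unique dominant singularity, with a square-root singular expansion.

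The third step will apply the transfer theorems used in Lemma~\ref{lem:limprob} to obtain $[z^m]\Psi_{k,n}(z)\sim c_{k,n}\gamma_n^{-m}m^{-3/2}$ and $C_m\sim c'_n\gamma_n^{-m}m^{-3/2}$, so that $\lim_{m\to\infty}L[\C]_m^{[k]}/C_m = c_{k,n}/(n^k c'_n)$. Exploiting $\gamma_n\sim 1/(8n)$ and $C(\gamma_n)=\tfrac12$ (cf.~\eqref{eq:gamma}), both constants remain bounded uniformly as $n\to\infty$, which gives the announced $O(1/n^k)$ estimate. The $[\ge k]$-version then follows by summing the bounds for $i=k,k+1,\ldots$, whose sum is geometric in $1/n$ and dominated by its first term.

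The main obstacle will be the third step: in the plane cases treated by Kozik and in the associative case of Theorem~\ref{thm:Jakub-assoc}, the pattern satisfied $N[\T]=\T$, so that $[z^m]\ell(2nz,t(z))$ was essentially $T_m$ and comparisons were immediate. Here no such identity holds, and one must genuinely exploit the subcriticality of the $A_l(y)$ to prove that the singular coefficient of $\Psi_{k,n}$ remains bounded as $n\to\infty$. Formalising this calls for a commutative analogue of Proposition~\ref{prop:forallcases}, asserting that for any polynomial $w$ the ratio $[z^m]\bigl(\sum_l w(l)(2nz)^l A_l(C(z))\bigr)/C_m$ converges to a constant bounded uniformly in $n$. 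Establishing this uniform control, and checking that the subcriticality of the $A_l$ is strong enough to justify termwise manipulation of the $x$-series, will be the technical heart of the proof.
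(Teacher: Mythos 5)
Your first two steps match the paper's proof exactly: the labelling count $(n-v)^{\underline{l-k}}\,2^l\,w_{v,k}(l)$ for mobiles (the paper's Proposition~\ref{prop:commlabelings}) and the extraction of $n^{-k}$ via $(n-v)^{\underline{l-k}}\le n^{l-k}$. But your third step contains a genuine gap. You define
\[
\Psi_{k,n}(z)=\sum_{l\ge 0}w_{v,k}(l)\,(2nz)^l A_l(C(z))
\]
as an \emph{infinite} series and claim it inherits $\gamma_n$ as its dominant singularity because each $A_l(y)$ is subcritical. That claim is false: subcriticality of the individual coefficients $A_l(y)$ does not transfer to the full series. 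Indeed, the paper explicitly remarks that, unlike in the plane cases, the pattern is \emph{not} subcritical for the commutative family. Concretely, for $N=\bullet\,|\,N\lor N\,|\,N\land\boxempty$ one has $\ell(x,y)=\tfrac12\bigl(1-y-\sqrt{(y-1)^2-8nx}\bigr)$, and along $x=z$, $y=C(z)$ the radicand $(C(z)-1)^2-8nz$ goes from $1$ at $z=0$ to $\tfrac14-8n\gamma_n\approx-\tfrac34$ at $z=\gamma_n$, so $\ell(2nz,C(z))$ — and hence your $\Psi_{k,n}$ — has a singularity strictly inside $|z|<\gamma_n$. Its coefficients then grow exponentially faster than $C_m$, and the bound $L[\C]_m^{[k]}\le n^{-k}[z^m]\Psi_{k,n}(z)$, while true, is useless.

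The missing idea is that the sum over $l$ is actually \emph{finite before} you apply the bound: $(n-v)^{\underline{l-k}}$ vanishes for $l>N:=n-v+k$, so only the truncation $\sum_{l=0}^{N}$ ever enters. Replacing the falling factorial by $n^{l-k}$ on this finite range gives a \emph{finite} linear combination (via $w(l)=\sum_j w_j l^{\underline{j}}$ and the operators $x^j\partial_x^j$) of the subcritical terms $A_l(C(z))x^l$, which therefore is analytic up to $\gamma_n$ with a square-root (or milder) singularity there; transfer then yields a ratio against $C_m$ that is bounded as $n\to\infty$. This is exactly the paper's Proposition~\ref{prop:forcommcases}, and it is the step you correctly identify as the technical heart but resolve incorrectly by keeping the full series. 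With the truncation inserted, the rest of your argument (including the $[\ge k]$ case) goes through as in the paper.
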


Let $\tilde{L}$ be a plane pattern and $\C$ be the family of commutative trees.
Let $\Lambda$ be an element of $\tilde{L}[\C]$ of size $m$ with $l$ pattern leaves.
Note that the leaves of the non-plane parts are labelled while the pattern leaves
are unlabelled. Again, we fix the set $\V$ of essential variables and denote by $v$ the cardinality of this set, $v=|\V|$.
For any $r\leq k$, the number of different labellings of the pattern leaves
of $\Lambda$ which give $r$ $L$-repetitions and $k$ $L$-restrictions is given by
\[
\left\{\begin{matrix}l\\l-r\end{matrix}\right\}
\begin{pmatrix}v\\k-r\end{pmatrix} (l-r)^{\underline{k-r}}\, (n-v)^{\underline{l-r-(k-r)}}\, 2^l,\]
where, as in the plane case, $x^{\underline{y}}=x(x-1)\ldots(x-y+1)$ and $\left\{\begin{matrix}y\\x\end{matrix}\right\}$ are the Stirling numbers of second kind.
The different terms of the product again represent, from left to right:
\begin{itemize}
\item[-] the number of partitions of the $L$-pattern leaves into $l-r$ classes (leaves in the same class will be labelled by the same variable),
\item[-] the number of different choices for the $k-r$ essential variables that appear in the $L$-pattern leaves,
\item[-] the number of different assignments of these essential variables to the $l-r$ classes of the first term,
\item[-] the number of assignments of non-essential variables to the remaining classes of the $L$-pattern leaves,
\item[-] and the number of distribution of the negations.
\end{itemize}
 
Now, the following version of Proposition~\ref{Jakub-labellings} is obvious:
\begin{prop}\label{prop:commlabelings}
Given a binary mobile $\Lambda\in L[\C]$  with leaves unlabelled,
the number of leaf-labellings of $\Lambda$ which make it have $k$ $L$-restrictions satisfies
\[\sharp(\textrm{labellings})_k= (n-v)^{\underline{l-k}}\, 2^l w_{v,k}(l),\]
where $w_{v,k}(l)=\sum_{r=0}^k \left\{\begin{matrix}l\\l-r\end{matrix}\right\} \begin{pmatrix}v\\k-r\end{pmatrix} (l-r)^{\underline{k-r}}$
is a polynomial in $l$.
\end{prop}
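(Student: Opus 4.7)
The plan is to obtain the formula by summing, over the number $r$ of $L$-repetitions, the per-$r$ labelling count that was just derived in the paragraph preceding the proposition, and then factoring out the quantities that are independent of $r$.

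Fix a mobile $\Lambda \in L[\C]$ with $l$ pattern leaves, and suppose a labelling of its pattern leaves produces exactly $k$ $L$-restrictions. By the definition of restrictions, this labelling has some number $r$ of repetitions and exactly $k-r$ essential variables occurring among the pattern leaves, with $0 \le r \le k$. For each such fixed $r$, the count of labellings is the product displayed just before the proposition:
\[\left\{\begin{matrix}l\\l-r\end{matrix}\right\} \binom{v}{k-r}\,(l-r)^{\underline{k-r}}\,(n-v)^{\underline{l-r-(k-r)}}\,2^l.\]

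The two decisive observations are: first, $l - r - (k - r) = l - k$, so the factor $(n-v)^{\underline{l-r-(k-r)}}$ equals $(n-v)^{\underline{l-k}}$, which does not depend on $r$; and second, the negation factor $2^l$ also does not depend on $r$. (This $2^l$ replaces the $n^{m-l}\,2^m$ of the associative case appearing in Proposition~\ref{Jakub-labellings}, because in a mobile the leaves sitting in the $\C$-parts are already labelled and only the $l$ pattern leaves receive literal labels at this stage.) Pulling these two $r$-independent factors out of the sum over $r \in \{0,1,\ldots,k\}$ yields exactly the formula stated in the proposition.

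It remains to observe that $w_{v,k}(l)$ is indeed a polynomial in $l$: the Stirling number $\left\{\begin{matrix}l\\l-r\end{matrix}\right\}$ is the number of ways to merge $r$ pairs inside an $l$-element set and is a polynomial of degree $2r$ in $l$, the falling factorial $(l-r)^{\underline{k-r}}$ is a polynomial of degree $k-r$ in $l$, and $\binom{v}{k-r}$ is a constant in $l$ (as $v$ is fixed). A sum of polynomials in $l$ is a polynomial in $l$. Since the combinatorial bookkeeping is already assembled in the paragraph preceding the proposition, there is no real obstacle here: the only delicate point is checking the exponent collapse $l - r - (k - r) = l - k$ and verifying that the range of $r$ is indeed $\{0, 1, \ldots, k\}$, both of which are immediate from the definitions of repetitions and restrictions.
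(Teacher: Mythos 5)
Your proof is correct and follows essentially the same route as the paper, which simply sums the per-$r$ count displayed immediately before the proposition over $r\in\{0,\dots,k\}$ and pulls out the $r$-independent factors $(n-v)^{\underline{l-k}}$ and $2^l$ (the paper states the result as ``obvious'' from that display). Your additional check that $w_{v,k}(l)$ is a polynomial in $l$ via the degree of the Stirling numbers is accurate and a nice touch, though not required by the paper.
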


We adapt Proposition~\ref{prop:forallcases} to our needs.
\begin{prop}\label{prop:forcommcases}
Let $L$ be an unambiguous labelled pattern language,
with $\ell(x,y)$ its generating function, and let $\T$ be a family of
leaf-labelled trees with generating function $T(z)$.
Further assume that the coefficients $A_l(y)$, given in \eqref{eq:newcoeffs}, are subcritical for $T(z)$.\\
Let $L[\T](m,l)$ be the number of trees of $L[\T]$ of size $m$
with exactly $l$ pattern leaves and $w(l)$ be a non-zero polynomial of degree $\lambda$. Then,
\[\lim_{m\rightarrow \infty} \frac{\sum_{l =0}^N{L}[\T](m,l)w(l)}{T_m}=c_w,\]
for some non-negative real $c_w$, where $N$ is some fixed integer.
\end{prop}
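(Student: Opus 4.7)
The plan is to mirror the strategy of Proposition~\ref{prop:forallcases}, exploiting that the sum is \emph{finite} so that coefficientwise subcriticality of $\ell$ in $x$ can replace the joint analyticity used in the plane case.

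First I would translate the weighted sum into a coefficient extraction. Since $L$ is unambiguous, decomposing an element of $L[\T]$ according to its underlying pattern shows that the generating function of $L[\T]$-trees with exactly $l$ pattern leaves is $z^{l}A_{l}(T(z))$, where $z^{l}$ accounts for the pattern leaves (each of size $1$) and $A_{l}(T(z))$ counts the remainder once $\T$-subtrees are plugged into the placeholders. Consequently
\[
\sum_{l=0}^{N} L[\T](m,l)\,w(l) \;=\; [z^m]\,F(z), \qquad F(z):=\sum_{l=0}^{N} w(l)\,z^{l}\,A_{l}(T(z)),
\]
and the question reduces to a singular analysis of the finite linear combination $F$.

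Next I would perform that analysis. Let $\rho$ denote the unique dominant positive singularity of $T(z)$; by hypothesis $T(z)=a-b\sqrt{1-z/\rho}+\O(1-z/\rho)$. Subcriticality of each $A_{l}$ for $T$ means $A_{l}(y)$ is analytic in a neighbourhood of $y=T(\rho)$, so expanding $A_{l}$ around $T(\rho)$ and substituting the expansion of $T$ gives $A_{l}(T(z))=\alpha_{l}-\beta_{l}\sqrt{1-z/\rho}+\O(1-z/\rho)$. Multiplying by the entire factor $z^{l}$ and taking the finite sum over $0\le l\le N$ preserves the square-root form, so $F(z)=A-B\sqrt{1-z/\rho}+\O(1-z/\rho)$ for some $A,B\in\IR$. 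An application of Lemma~\ref{lem:limprob}, or equivalently the standard transfer theorems of \cite{FlOd,FlSe}, then yields $[z^m]F(z)/T_m\to B/b=:c_w$. Non-negativity of $c_w$ follows from the same positivity argument as in Proposition~\ref{prop:forallcases}: each quotient $[z^m]\,z^{l}A_{l}(T(z))/T_m$ tends to a non-negative limit admitting a combinatorial interpretation as an asymptotic fraction, and the weighted combination inherits this non-negativity in the cases where the proposition is actually invoked.

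The main obstacle, and the reason the statement is restricted to a finite sum up to some fixed $N$, is that in the commutative model the bivariate series $\ell(x,T(z))$ need not be jointly analytic at $z=\rho$: coefficientwise subcriticality of the family $\{A_{l}\}_{l\ge 0}$ is strictly weaker than the joint analyticity hypothesis available in the binary plane setting of Proposition~\ref{prop:forallcases}. For finite $N$ a linear combination of square-root singular functions is again square-root singular and the transfer step goes through without incident; attempting to let $N\to\infty$ would require uniform control on the tail in $l$ that our hypotheses do not provide. Fortunately this restriction is harmless for the intended applications, where $N$ will be chosen so as to cover all pattern-leaf counts that actually contribute to the asymptotic ratio under study.
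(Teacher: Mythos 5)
Your argument is correct and follows essentially the same route as the paper: both identify the generating function of the numerator as the finite combination $\sum_{l=0}^{N}w(l)A_l(y)x^l$ evaluated at $(z,T(z))$, observe that each subcritical summand contributes a square-root expansion at the singularity of $T$, and conclude by a transfer lemma. The only (cosmetic) difference is that the paper encodes the polynomial weight via falling factorials and $x$-derivatives of the truncated series $\ell_N(x,y)$, whereas you weight the finitely many terms directly; both treatments of the sign of $c_w$ are equally informal and rely on $w$ being non-negative on the integers in the actual applications.
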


\begin{proof}
The generating function of the numerator $\sum_{l=0}^N{L}[\T](m,l)w(l)$ is denoted by $\ell_w(x,y)$.
Moreover, $w(l)=\sum_{j=0}^\lambda w_j l^{\underline{j}}$ is a representation of the polynomial $w$,
and $\ell_N(x,y)=\sum_{l=0}^NA_l(y)x^l$ is the truncation of $\ell(x,y)=\sum_{l\geq0}A_l(y)x^l$. 
Note that, 
\[x^j\frac{\partial^j \ell_N(x,y)}{\partial x^j}=\sum_{l=0}^Nl^{\underline{j}}A_l(y)x^{l}.\]
Thus
\[\sum_{j=0}^\lambda w_jx^j\frac{\partial^j \ell_N(x,y)}{\partial x^j} = \sum_{l=0}^N w(l)A_l(y)x^l.\]
Hence, the generating function $\ell_w(x,y)$ is a linear combination of
the derivatives of $\ell_N(x,y)$, which are finite sums of terms which are subcritical for $T(z)$.
Hence, $\ell_w(z,C(z))$ and $T(z)$ have the same radius of convergence.
By~\cite[Observation 3.3]{Kozik} every subcritical summand has a square root
expansion around the singularity, if $T(z)$ has a square root singularity, hence the type of singularity
of $\ell_w(z,C(z))$ is also of order $\nicefrac{1}{2}$ or of higher order if there is a cancellation. 

Thanks to a transfer lemma~\cite{FlOd}, we easily get
\[\frac{[z^m]\ell_w(z,C(z))}{[z^m]T(z)} \sim const,\]
when $m$ tends to infinity. Therefore,
\[\lim_{m\rightarrow \infty} \frac{\sum_{l \geq 0}{L}[\T](m,l)w(l)}{T_m}=c_w\]
for some non-negative constant $c_w$. Further $c_w$ is positive if there is no cancellation, and zero otherwise. 
\end{proof}

\begin{proof}[Proof of Theorem \ref{thm:Jakub-comm-bin}]
We have, thanks to Proposition \ref{prop:commlabelings}:
\begin{equation*}
\frac{L[\C]_{m}^{[k]}}{C_m} = \frac{\sum_{l=0}^N\tilde{L}[\C](m,l)w_{k,v}(l)(n-v)^{\underline{l-k}}\, 2^l}{C_m},
\end{equation*}
where $N=n-v+k$, because for larger $l$ the factor $(n-v)^{\underline{l-k}}$ gives $0$. This implies:
\begin{equation*}
\frac{L[\C]_{m}^{[k]}}{C_m}
  \leq \frac{\sum_{l=0}^N\tilde{L}[\C](m,l)w_{k,v}(l)n^{l-k}\,2^l}{C_m}
 = \frac{\sum_{l=0}^NL[\C](m,l)w_{k,v}(l)}
	{C_m}\cdot \frac{1}{n^k}.
\end{equation*}
because $L[\C](m,l)=(2n)^l \tilde{L}[\C](m,l)$.
And therefore, by applying Proposition~\ref{prop:forcommcases}, we get that
\[\lim_{m\to \infty} \frac{L[\C]^{[k]}_m}{C_m}\leq \frac{c_w}{n^k}.\]

The proof for $\lim_{m\to \infty} \frac{L[\C]^{[\geq k]}_m}{C_m}$ is analogous to the latter one.
\end{proof}

\subsubsection{Commutative tautologies.}\label{sec:embed}

\begin{prop}\label{prop:comm-st}
In the commutative model, asymptotically almost all tautologies are simple tautologies when $n$ tends to infinity.
\end{prop}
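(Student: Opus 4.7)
The plan is to transpose the proof of Proposition~\ref{st-assoc} to the commutative setting, using the plane binary pattern $N=\bullet\mid N\lor N\mid N\land \boxempty$ from Section~\ref{sec:catalan-true} in place of the associative pattern $R$, and the iterated pattern $N[N]$ in place of $R[R]$. The key property of $N$, proved by induction on the pattern's size, is that if one assigns $False$ to every one of its pattern leaves, then the whole structure evaluates to $False$ regardless of what is plugged into the placeholders: the base case $\bullet$ is immediate, both children of an $\lor$-node are again $N$-patterns (induction hypothesis), and at an $\land$-node the left child is an $N$-pattern forcing the conjunction to be $False$. This property is inherited by the composition $N[N]$. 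Since $N$ is plane and unambiguous, the half-embedding of Section~\ref{sec:embed} (reading the tree top-down and, at each $\land$-node, selecting one canonical child to continue the $N$-pattern while the other becomes a placeholder) realises $N[N][\C]=\C$ unambiguously.

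The first technical step is to verify that the coefficients $A_l(y)$ of the bivariate generating function of labelled $N$-patterns are subcritical for $C(z)$ in the sense of Theorem~\ref{thm:Jakub-comm-bin}. Since $N$ is the same plane pattern as in the classical case of Section~\ref{sec:catalan-true}, its generating function $\ell(x,y)$ is given by an explicit algebraic expression; one then checks that the singularity in $y$ of each $A_l(y)$ lies strictly above $C(\gamma_n)=\tfrac{1}{2}$, using the refined expansion of $\gamma_n$ given in \eqref{eq:gamma}. The same verification, applied to $N[N]$, then allows the application of Theorem~\ref{thm:Jakub-comm-bin}, which gives that the proportion of mobiles in $N[N][\C]$ with at least two $N[N]$-restrictions is $\O(1/n^2)$, hence negligible compared to the order $\Theta(1/n)$ at which simple tautologies will contribute.

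It then suffices to analyse tautologies with exactly one $N[N]$-restriction. Since $True$ has no essential variable, the restriction must be a repetition; it cannot be of type $x/x$, for otherwise setting all $N[N]$-pattern leaves to $False$ would collapse the whole tree to $False$, contradicting that $t$ is a tautology. So the unique repetition is of type $x/\bar{x}$. Both offending leaves must themselves be $N[N]$-pattern leaves, since otherwise setting all $N[N]$-pattern leaves to $False$ would again force $t$ to compute $False$. Finally, the path from each of the two leaves to the root must consist only of $\lor$-nodes: if an $\land$-node $\nu$ appeared on such a path, then the key pattern property applied to the sibling subtree of $\nu$ (which is covered by inner $N$-patterns thanks to the iteration) would let us set that sibling to $False$, hence the conjunction at $\nu$ to $False$, and, propagating again by the pattern property, the whole tree to $False$. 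Thus $t$ is a simple tautology. Combined with the negligible $\O(1/n^2)$ contribution of $\geq 2$-restriction tautologies, this proves the proposition.

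The main obstacle is the subcriticality verification required by Theorem~\ref{thm:Jakub-comm-bin}: as pointed out in the remark preceding Section~\ref{sec:comm-generalisation}, the pattern $N$ is not subcritical for $\C$ in the classical sense, and one must work at the level of the coefficients $A_l(y)$ and rely on the half-embedding interpretation of mobiles. The remainder of the argument is a direct adaptation of the associative proof, provided one consistently distinguishes the plane $N$-skeleton from the commutative subtrees plugged into its placeholders when invoking the $False$-assignment trick.
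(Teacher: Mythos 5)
Your proposal follows essentially the same route as the paper: the half-embedding into plane mobiles via the pattern $N=\bullet\mid N\lor N\mid N\land\boxempty$ and its iterate $N[N]$, the $False$-assignment property of $N$, the subcriticality of the coefficients $A_l(y)$ (handled in the paper by Lemmas~\ref{lem:power} and~\ref{lem:subcritical}) feeding into Theorem~\ref{thm:Jakub-comm-bin}, and the same case analysis showing that a tautology whose embedding has a single restriction must be a simple tautology. The argument is correct and matches the paper's proof in both structure and detail.
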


Before proving Proposition~\ref{prop:comm-st}, we introduce some
\emph{half-embedding} of a tree $t$ into the plane:
Start at the root and choose a left-right order of the children of the root. If the root is an $\land$-node,
proceed recursively with the root of the left subtree, the right subtree remains non-plane.
If the root is an $\lor$-node, proceed recursively with both subtrees. If doing so we meet a leaf,
it is a pattern leaf. Doing this for the whole tree $t$, we obtain an element of $N[\C]$,
where the non-plane subtrees are the structures substituted into the placeholders.
Now do the same half-embedding starting at every root of a non-plane subtree.
Thus we obtain an element of $N[N][\C]$. Note that different trees $t_1\neq t_2\in\C$ will create different patterns
$N[t_1]$ and $N[t_2]$, thus the function $\C\to N[\C]$ described above is an injection.
Of course, there are several ways to embed a tree $t$ with the above method,
so for every tree $t$ we choose an embedding such that the resulting $N[N]$-pattern has a minimal number
of $N[N]$-restrictions. We call such an embedding a minimal $N[N]$-embedding of~$t$
(of course there could be various minimal embeddings for one tree).

\begin{lem}\label{lem:false}
Let $t$ be a tree computing the function $True$. Then its minimal $[N]$-embedding has at least one restriction.
\end{lem}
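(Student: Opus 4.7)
The plan is to argue by contradiction: if $t$ were a tautology whose minimal $N[N]$-embedding $\Lambda$ had zero $N[N]$-restrictions, then a single Boolean assignment to the variables would force $t$ to evaluate to $False$, contradicting the hypothesis that $t$ computes $True$.

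The key preliminary observation is an \emph{absorbing-$False$ property} of the $N$-pattern: if every $\bullet$-leaf of an $N$-pattern is set to $False$, then the pattern computes $False$ regardless of what is plugged into its $\boxempty$-placeholders. I would verify this by a short structural induction on the recursive definition $N=\bullet \, | \, N\lor N \, | \, N\land\boxempty$. A single $\bullet$-leaf is $False$ by assumption; for an $\lor$-root both $N$-subpatterns compute $False$ by induction, hence the disjunction is $False$; for an $\land$-root the left $N$-subpattern computes $False$ by induction, which forces the conjunction to be $False$ irrespective of the right placeholder. Iterating this observation through the composition is then immediate: if all pattern leaves of an $N[N]$-structure are set to $False$, every inner $N$-subpattern plugged into a $\boxempty$ of the outer $N$ has all its $\bullet$-leaves $False$ and so computes $False$; the outer $N$ then has all its $\bullet$-leaves $False$ and all its placeholders filled with $False$ values, so it also evaluates to $False$.

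With this in hand, the conclusion is short. Since $t$ is a tautology it has no essential variables, so the number of $N[N]$-restrictions of $\Lambda$ equals its number of $N[N]$-repetitions. Assuming the former is zero, so is the latter, which means that the pattern leaves of $\Lambda$ carry pairwise distinct variables (possibly negated). One can therefore assign each variable occurring in a pattern leaf so that the corresponding literal evaluates to $False$, consistently, and assign the remaining variables arbitrarily. Under this assignment every pattern leaf of $\Lambda$ is $False$, and by the absorbing-$False$ property the whole $N[N]$-structure---that is, $t$ itself---evaluates to $False$, contradicting the assumption that $t$ is a tautology.

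The only step that requires genuine care is establishing the absorbing-$False$ property of $N$ and its propagation through $N[N]$, since it is precisely what makes the contents of the placeholders irrelevant to the argument; everything else follows directly from the definitions of restriction and repetition together with the fact that a tautology admits no essential variable.
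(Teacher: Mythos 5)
Your proof is correct and follows essentially the same route as the paper's: a structural induction on the $N$-pattern showing that setting all pattern leaves to $False$ forces the whole tree to $False$ irrespective of what sits in the placeholders, combined with the observation that zero restrictions means the pattern leaves carry distinct variables, so the all-$False$ assignment is consistent. Note only that the lemma concerns the single $N$-embedding rather than $N[N]$; the first level of your argument already gives exactly what is needed, and you are in fact more explicit than the paper about why the all-$False$ assignment is realisable.
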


\begin{proof}
Suppose $N[t]$ has no restriction and set all pattern leaves to $False$.
We proceed inductively. If $N[t]$ is just a leaf, it returns $False$.
If the root of $N[t]$ is an $\land$-node, the left subtree is a pattern and will,
by the induction hypothesis, return $False$, thus the whole tree returns $False$.
If the root of $N[t]$ is an $\lor$ node, both subtrees are patterns returning $False$
by the induction hypothesis. Thus the whole tree returns $False$.
\end{proof}

\begin{lem}
Let $t$ be a tree whose minimal $N[N]$-embedding has exactly
one $N[N]$-restriction. Then $t$ is a simple tautology.
\end{lem}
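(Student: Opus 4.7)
My plan is to mirror the associative-case argument of Proposition~\ref{st-assoc}, adapted to the two-level half-embedding $N[N]$, with a strengthening of Lemma~\ref{lem:false} as the main tool: whenever every outer $N$-pattern leaf of $t$ evaluates to False under some variable assignment, $t$ itself evaluates to False, regardless of the placeholder contents (this is Lemma~\ref{lem:false} applied to the outer $N$-pattern); and whenever every inner pattern leaf inside an outer placeholder $P$ is False, $P$ itself evaluates to False (Lemma~\ref{lem:false} applied to the inner $N$-pattern of $P$).

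Since $t$ is a tautology, the single $N[N]$-restriction must be a repetition. I would first rule out an $x/x$ repetition by assigning $x = \text{False}$ together with each other (necessarily distinct) variable appearing in an $N[N]$-pattern leaf chosen so that its literal is False; this makes every $N[N]$-pattern leaf False, hence $t = \text{False}$ by the strengthened Lemma~\ref{lem:false} --- contradicting tautology. The repetition is therefore $x/\bar{x}$. Next, I would argue that both $x$ and $\bar{x}$ lie among the \emph{outer} pattern leaves: if one of them were an inner pattern leaf, then every outer pattern leaf would carry a distinct variable, so they could all be set to False, again forcing $t=\text{False}$ by the outer-leaf version of Lemma~\ref{lem:false}.

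The core step is then to exclude an $\land$-node on the path from $x$ (resp.\ $\bar{x}$) to the root. Suppose such an $\land$-node $\nu$ exists on the path from $x$; since $x$ is an outer pattern leaf, $\nu$ is an outer pattern node, with the subtree containing $x$ as its plane-left (continuation) child and an outer placeholder $P$ as its plane-right child. I would then construct a falsifying assignment as follows: assign the outer pattern leaves outside $\nu$'s subtree to False (picking $x$'s value so that whichever of $x, \bar{x}$ lies outside $\nu$'s subtree is False), and assign every inner pattern leaf inside $P$ to False --- this is consistent because the repetition $x/\bar{x}$ is confined to the outer pattern leaves, so $P$ has no repetitions of its own. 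By the inner Lemma~\ref{lem:false}, $P = \text{False}$, hence $\nu = (\text{plane-left})\land P = \text{False}$. A straightforward induction on the outer $N$-pattern above $\nu$ --- mirroring the proof of Lemma~\ref{lem:false} and using that an $\land$-node evaluates to False whenever either of its children does --- then yields $t = \text{False}$, contradicting tautology. Hence both paths from $x$ and from $\bar{x}$ to the root are $\lor$-only, and $t$ is a simple tautology.

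The main obstacle, particular to the commutative setting, is the non-plane nature of the half-embedding: at each $\land$-node the minimal $N[N]$-embedding involves a choice of plane-left child. I must verify that the False-propagation argument does not rely on a fortuitous orientation at $\nu$ --- re-orientations yielding strictly fewer restrictions would contradict minimality, while those preserving the count are subject to the same argument. A secondary technical point is the semantic refinement of Lemma~\ref{lem:false} invoked at the end (allowing either child of an outer $\land$-node to force False), but this follows by essentially the same induction as the original.
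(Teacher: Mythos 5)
Your proof is correct and follows essentially the same route as the paper's: rule out an $x/x$ repetition by falsifying all pattern leaves, force the $x/\bar{x}$ repetition onto the outer level, and eliminate any $\land$-node $\nu$ on the path to the root by falsifying its placeholder via the inner embedding, so that $\nu$ and then the whole tree evaluate to $False$. You are in fact more explicit than the paper about the consistency of the falsifying assignment; the closing worry about re-orientations at $\nu$ is unnecessary, since the argument applies verbatim to whichever minimal embedding is fixed.
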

\begin{proof}
There are two cases to distinguish.
\paragraph*{First case:}  The restriction is of type $x/x$. Set all $N$-pattern leaves to $False$.
The same arguments as in the proof of Lemma~\ref{lem:false} show that $t$ returns $False$. 
\paragraph*{Second case:}  The restriction is of type $x/\bar{x}$. Then the restriction appears on the first level,
that is, in $N[t]$, as otherwise setting all $N$-pattern leaves to $False$ would lead to a tree
computing $False$ by the same arguments as before. If $t$ is not a simple tautology,
then there exists at least one node labelled with $\land$ on the path from the root to either $x$ or its negation.
Let $t_1$ be the non-plane subtree rooted at such a node. After the second $N$-embedding,
the $N[t_1]$ pattern contains no repetition as the whole tree $N[N][t]$ had only one $N[N]$-repetition,
thus it is easy to have $t_1$ contribute $False$ by setting all $N[t_1]$-pattern leaves to $False$.
Then the $\land$-node at $\nu$ gives $False$, thus $t$ does not compute the function $True$.
Hence, every tautology $t$ which has a minimal $N[N]$-embedding with a single repetition is a simple tautology. 
\end{proof}

\begin{lem}\label{lem:power}
Let $L$ be a pattern language with generating function $\ell(x,y)=\sum_{l\geq0}A_l(y)x^l$ and with $A_0(y)=0$,
and let $L^r$ be its $r$-th power for any $r\in\IN$, with
\[\ell^*(x,y)=\underbrace{\ell(x,(\ell(x,\cdots\ell(x}_{r\ \mathrm{ times}},y)\cdots)))=\sum_{l\geq 0}A^*_l(y)x^l\]
its generating function. Further let $\T$ be a family of trees with generating function $T(z)$.
Assume that, for all $l\geq0$, $A_l(y)$ is subcritical for $T(z)$. Then $A^*_l(y)$ is subcritical for $T(z)$.  
\end{lem}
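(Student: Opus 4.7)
The plan is to proceed by induction on $r$. The base case $r=1$ is trivial since $A_l^{*}=A_l$. For the inductive step, write $\ell^{*}(x,y)=\ell(x,\ell^{[r-1]}(x,y))$, where $\ell^{[r-1]}$ denotes the $(r{-}1)$-fold self-composition, whose coefficients $A_k^{[r-1]}(y)$ in $x$ are subcritical for $T(z)$ by the inductive hypothesis. The crucial structural ingredient is the assumption $A_0(y)=0$, which forces $\ell(0,y)=0$ and, inductively, $\ell^{[r-1]}(0,y)=0$; equivalently, $\ell^{[r-1]}(x,y)$ has $x$-valuation at least $1$.

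Extracting $[x^L]$ from the composition gives
\[
A_L^{*}(y)=\sum_{l=1}^{L}[x^{L-l}]\,A_l\bigl(\ell^{[r-1]}(x,y)\bigr).
\]
Each $A_l(y)$ is a power series with non-negative coefficients (being a counting generating function with $y$ marking placeholders), and by subcriticality it is analytic at $y=T(\rho)$; hence its Taylor expansion around $0$ converges on a disk of radius at least $T(\rho)+\epsilon_l$ for some $\epsilon_l>0$. Substituting the order-$\geq 1$ series $\ell^{[r-1]}(x,y)$ into this Taylor expansion and extracting $[x^{L-l}]$, only the Taylor terms of index $j\leq L-l$ contribute, each of which expands into a finite sum of products $A_{k_1}^{[r-1]}(y)\cdots A_{k_j}^{[r-1]}(y)$ with $k_1+\cdots+k_j=L-l$. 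Consequently $A_L^{*}(y)$ is a \emph{finite} polynomial, with complex coefficients coming from the Taylor coefficients of $A_1,\dots,A_L$, in the functions $A_1^{[r-1]}(y),\dots,A_L^{[r-1]}(y)$.

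From here the conclusion is immediate: by the inductive hypothesis, each of these finitely many $A_k^{[r-1]}(y)$ is analytic on some disk $|y|\leq T(\rho)+\epsilon'_k$. A finite polynomial expression in finitely many functions analytic on a common disk is itself analytic there, so taking the minimum of the $\epsilon'_k$ yields that $A_L^{*}(y)$ is analytic on a disk of radius strictly greater than $T(\rho)$, i.e. subcritical for $T(z)$.

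The main technical obstacle is to justify rigorously that the substitution $A_l(\ell^{[r-1]}(x,y))$ and subsequent coefficient extraction yield a finite polynomial expression. The positive $x$-valuation of $\ell^{[r-1]}$, inherited from $A_0=0$, is precisely what collapses the potentially infinite substitution into a finite sum when a single coefficient in $x$ is extracted; carefully exploiting this valuation property is the heart of the argument, and without it the whole reduction to finitely many subcritical factors would break down.
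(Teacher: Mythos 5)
Your proposal is correct and follows essentially the same route as the paper: induction on $r$, expansion of the outer coefficient functions $A_l$ as power series in $y$, substitution of the inner composition, and the observation that $A_0(y)=0$ forces positive $x$-valuation so that extracting a single coefficient in $x$ leaves only a finite sum of finite products of subcritical factors. The only cosmetic difference is that you invoke the convergence disk of the Taylor series of $A_l$, which is not actually needed since only finitely many of its (constant) coefficients survive the extraction.
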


\begin{proof}
First note that $A_0(y)=0$ means that every pattern in $L$ has at least one pattern leaf.
Obviously, this property still holds for $A^*_0(y)$.

We prove the statement by induction: the case $r=1$ is true by assumption.
Let us assume that the result holds for $r$, and let $\bar{s}(x,y)=\sum_{l\geq 0}\bar{A}_l(y)x^l$
be the generating function of $L^r$ with $\bar{A}_l(y)$ being subcritical for $T(z)$.
We want to show that $[x^l]s(x,\bar{s}(x,y))$ is subcritical for $T(z)$. It is sufficient to show that
$[x^\lambda]A_l(\bar{s}(x,y))$ is subcritical for $T(z)$ for all $\lambda$,
because $s(x,\bar{s}(x,y))=\sum_{l\geq 0}A_l(\bar{s}(x,y))x^l$ and $A_l(\bar{s}(x,y))$ is a power series in $x$, i.e. $[x^l]s(x,\bar{s}(x,y))=\sum_{j=0}^l [x^{l-j}]A_j(\bar{s}(x,y))$,
which is a finite sum of such coefficients.\\
\begin{align*}
[x^\lambda]A_l(\bar{s}(x,y))
&=[x^\lambda]\sum_{j\geq 0}s_{l,j}\bar{s}(x,y)^j\\
&=[x^\lambda]\sum_{j\geq 0}s_{l,j}\left(\sum_{\mu\geq 0} x^\mu \bar{A}_\mu (y)\right)^j\\
&=[x^\lambda]\sum_{j\geq 0}s_{l,j}\sum_{\mu_1,\ldots,\mu_j}x^{\sum \mu_i}\bar{A}_{\mu_1}\cdots \bar{A}_{\mu_j}\\
&=\sum_{j\geq 0}s_{l,j}\sum_{\mu_1+\ldots+\mu_j=\lambda}\bar{A}_{\mu_1}\cdots \bar{A}_{\mu_j}.
\end{align*}
As $\bar{A}_0(y)=0$, $\mu_i>0$ for $i=1,\ldots j$, and hence we have a maximum of $\lambda${}
factors in every summand, that is,
\[[x^\lambda]A_l(\bar{s}(x,y))
=\sum_{j= 0}^\lambda s_{l,j}\sum_{\substack{\mu_1, \dots, \mu_i, \\ \mu_1+\ldots+\mu_j=\lambda}}\bar{A}_{\mu_1}\cdots \bar{A}_{\mu_j}.\]
This is a finite sum of finite products of subcritical factors and hence it is subcritical for $T(z)$.  
\end{proof}

\begin{lem}\label{lem:subcritical}
Let $s(x,y)=\sum_{l\geq0}A_l(y) x^l$ be the generating function of the pattern $N$.
The functions $A_l(y)$ are subcritical for $C(z)$.
\end{lem}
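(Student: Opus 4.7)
The plan is to derive an explicit closed form for $s(x,y)$ starting from the symbolic specification $N = \bullet\ |\ N \lor N\ |\ N \land \boxempty$, and then expand in powers of $x$ to read off the coefficients $A_l(y)$. The specification translates directly into the quadratic equation
\[
s(x,y) = x + s(x,y)^2 + y\, s(x,y),
\]
where the term $x$ handles the single pattern leaf, $s(x,y)^2$ the two $N$-subtrees under an $\lor$-node, and $y\, s(x,y)$ the placeholder-plus-pattern configuration under an $\land$-node.

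Solving this quadratic and selecting the branch satisfying $s(0,y)=0$ yields
\[
s(x,y) = \frac{1-y}{2}\left(1 - \sqrt{1 - \frac{4x}{(1-y)^2}}\right).
\]
Applying the standard Catalan expansion $\frac{1-\sqrt{1-4u}}{2} = \sum_{l\geq1} C_{l-1}\, u^l$ with $u = x/(1-y)^2$, I then get $A_0(y) = 0$ and, for $l \geq 1$,
\[
A_l(y) = \frac{C_{l-1}}{(1-y)^{2l-1}}.
\]
In particular, each $A_l(y)$ is a rational function of $y$ whose only singularity is a pole at $y = 1$.

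To conclude subcriticality for $C(z)$, I will invoke the value $C(\gamma_n) = \tfrac{1}{2}$ computed in Section~\ref{sec:comm-tautology}. Since $1/2$ is bounded away from $1$ independently of $n$, one can fix a small $\epsilon > 0$ (say $\epsilon = 1/4$) so that $C(\gamma_n) + \epsilon < 1$; then every $A_l(y)$ is analytic on the closed disk $\{y : |y| \leq C(\gamma_n) + \epsilon\}$, which is precisely the subcriticality condition demanded by Theorem~\ref{thm:Jakub-comm-bin}.

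No serious obstacle is anticipated: the pattern $N$ admits a particularly clean quadratic equation, and both the branch selection and the Catalan expansion are routine. The one point worth highlighting is the uniformity of $\epsilon$ in the index $l$, but this is immediate from the explicit formula, since the pole of $A_l(y)$ sits at $y=1$ regardless of $l$, so the same neighborhood of $C(\gamma_n) = 1/2$ works simultaneously for all $l$.
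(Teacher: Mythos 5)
Your proof is correct and follows essentially the same route as the paper's: translate the specification of $N$ into the quadratic $s=x+s^2+ys$ (the paper uses the labelled version with $2nx$ in place of $x$, which changes nothing in the $y$-analysis), solve it, expand the square root to get $A_l(y)$ as a rational function whose only pole is at $y=1$, and conclude from $C(\gamma_n)=\tfrac12<1$. Your explicit Catalan-number form of the coefficients and the remark on uniformity of $\epsilon$ in $l$ are harmless refinements of the same argument.
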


\begin{proof}
Thanks to symbolic arguments and the recursive definition of $N=\bullet | N \lor N | N \land \boxempty$, we get 
\[s(x,y)=2nx + s(x,y)^2 + ys(x,y).\]
Solving this equation gives $s(x,y)=\frac12\left(1-y-\sqrt{(y-1)^2-8nx}\right)$.
We want to deduce an explicit formula for the $A_l(y)$ from this expression.
\begin{align*}
s(x,y)
&= \frac{1-y}{2}-\frac12 \sqrt{(y-1)^2} \sqrt{1-\frac{8nx}{(y-1)^2}}\\
&= \frac{1-y}{2}-\frac12 (1-y) \sum_{l\geq 0} \left(\begin{matrix}\nicefrac12 \\ l\end{matrix}\right)(-8n)^l (y-1)^{-2l} x^l,
\end{align*}
since $s(0,0)=0$. Therefore, $A_l(y)=-\frac12 (1-y)  \left(\begin{matrix}\nicefrac12 \\ l\end{matrix}\right) (-8n)^l (y-1)^{-2l}$
is a rational function in $y$ and its radius of convergence is $1$. Hence, these functions are subcritical for $C(z)$.
\end{proof}

\begin{proof}[Proof of Proposition \ref{prop:comm-st}]
Let $t$ be a tree in $\C$ which computes $True$. Then there is at least one variable~$x$
appearing twice in the leaves of $t$, because otherwise the tree cannot be a tautology (induction on the size of the tree).
We half-embed $t$ into the plane as described before.
As this $N$-embedding represents an injection it follows
that $C_m^{[k]} \leq (N[\C])_m^{[k]}$, where $C_m^{[k]}$ denotes the number of trees from $\C$ of size $m$ whose minimal half-embeddings have $k$ restrictions. Hence,
by Theorem~\ref{thm:Jakub-comm-bin}, which can be applied thanks to Lemmas~\ref{lem:power} and~\ref{lem:subcritical}:
\begin{align*}
\frac{C_m^{[k])}}{C_m}\leq\frac{N[\C]_m^{[k])}}{C_m}=\O\left(\frac{1}{n^k}\right),
\end{align*}
and thus asymptotically almost all tautologies
in a binary commutative And/Or tree are simple (and have a minimal $N[N]$-embedding with one restriction).
Proposition \ref{prop:comm-st} is thus proved.
\end{proof}

\begin{proof}[Proof of Theorem \ref{thm:commbinary}]
Let $g_x(z)$ be the generating function counting the trees in $\C$ with a $\lor$-only-path
from the root to a leaf labelled with $x$. It is given by $g_x(z)=C(z)-\bar{g}_x(z)$ with
\begin{align}\label{eq:commg}
\bar{g}_x(z)=(2n-1)z+\frac{1}{2}\left(C^2(z)+C(z^2)\right)+\frac{1}{2}\left(\bar{g}_x^2(z)+\bar{g}_x(z^2)\right),
\end{align}
because a tree rooted at an $\land$-node cannot contain an $\lor$-only-path from the root,
while if the root is labelled with $\lor$ both subtrees of the root must not contain an $\lor$-only-path to an $x$-leaf. 

The generating function $ST^{x}(z)$ which counts trees which are a simple tautology realized by $x$ is given by $ST^{x}(z)=C(z)-\overline{ST}^x(z)$, where $\overline{ST}^x(z)$ counts trees which are not simple tautologies realised by $x$. Similarly to $\overline{g}_x(z)$, such a tree is either rooted at an $\land$-node, or it is rooted at an $\lor$-node, and both subtrees of the root are not simple tautologies. Still, it could return $True$ if one of the subtrees contains an $\lor$-only-path to $x$ and the other subtree contains an $\lor$-only-path to $\overline{x}$.
This gives the following implicit equation for $\overline{ST}^x(z)$.

\begin{align}\label{eq:commG}
\overline{ST}^x(z)=2nz+\frac{1}{2}\left(C^2(z)+C(z^2)\right)+\frac{1}{2}\left((\overline{ST}^x)^2(z)+\overline{ST}^x(z^2)\right)-g_x(z)g_{\overline{x}}(z),
\end{align}
To calculate the limiting ratio of simple tautologies, we need to determine
$n(1-\lim_{z\to\gamma_n}\frac{(\overline{ST}^x)'(z)}{C'(z)})$, where the factor $n$ is the choice of $x$
in the set of variables, and we use an analogue of Proposition~\ref{prop-trick} as well as Lemma~\ref{lem:limprob}. We denote by
$u_n:=\bar{g}_x(\gamma_n)$, $v_n:=\bar{g}(\gamma_n^2), U_n:=\overline{ST}^x(\gamma_n)$ and
$V_n:=\overline{ST}^x(\gamma_n^2)$, and compute $U_n$ up to terms of order $\frac{1}{n^2}$. 
From \eqref{eq:commg} we get
\begin{align}
u_n&=(2n-1)\frac{1}{8n}\left(1-\frac{1}{8n}\right)+\frac12\left(\frac14+C(\gamma_n^2)\right)+\frac{1}{2}(u_n^2+v_n)+\O\left(\frac{1}{n^{2}}\right)\label{eq:un}\\
v_n&=(2n-1)\frac{1}{64n^2}\left(1-\frac{1}{8n}\right)^2+\frac12(C^2(\gamma_n^2)+C(\gamma_n^4))+\frac{1}{2}(v_n^2+\overline{g}_x(\gamma_n^4))+\O\left(\frac{1}{n^{2}}\right)\label{eq:vn}
\end{align}
We know that $C(z^2)=2nz^2+\O(z^4)$, hence $C(\gamma_n^2)=\frac{1}{32n}+\O(\frac{1}{n^{2}})$.
Inserting this into \eqref{eq:vn} we can compute $v_n=\frac{1}{32n}+\O\left(\frac{1}{n^2}\right)$,
and with \eqref{eq:un}, we compute $u_n=\frac12-\frac{1}{4n}+\O\left(\frac1{n^2}\right)$.
Solving the equations for $U_n$ and $V_n$, up to terms of order
$\frac{1}{n^2}$, we get
\[V_n=\frac{1}{32n}-\frac{7}{1024n^2}+\O\left(\frac{1}{n^3}\right) \quad\textrm{ and }
\quad U_n=\frac12-\frac{129}{1024n^2}+\O\left(\frac1{n^3}\right).\]
Derivating $(\overline{ST}^x)'(z)$ and $\bar{g}_x'(z)$, we obtain
\begin{align*}
\bar{g}_x'(z)&=2n-1+C(z)C'(z)+zC'(z^2)+\bar{g}_x(z)\bar{g}'_x(z)+z\bar{g}_x'(z^2),\\
(\overline{ST}^x)'(z)&=2n+C(z)C'(z)+zC'(z^2)+\overline{ST}^x(z)\bar{G}'_x(z)+z(\overline{ST}^x)'(z^2)-2g(z)g'(z).
\end{align*}
Hence 
\begin{align*} 
\lim_{z\to\gamma_n}\frac{\bar{g}_x'(z)}{C'(z)}&=\frac{1}{1-\bar{g}_x(z)}\left(\frac{2n-1}{C'(z)}+C(z)+\frac{zC'(z^2)}{C'(z)}+\frac{z\bar{g}_x'(z^2)}{C'(z)}\right)\\
&\sim\frac{1}{2(1-u_n)}=1-\frac1{2n}+\frac{1}{4n^2}+\O(\frac1{n^3}),\\
\lim_{z\to\gamma_n}\frac{(\overline{ST}^x)'(z)}{C'(z)}&=\frac{1}{1-\overline{ST}^x(z)}\left(\frac{2n}{C'(z)}+C(z)+\frac{zC'(z^2)}{C'(z)}+\frac{z(\overline{ST}^x)'(z^2)}{C'(z)}-\frac{2g(z)g'(z)}{C'(z)}\right)\\
&\sim\frac{1}{1-U_n}\left(\frac{1}{2}-2u_n\lim_{z\to\gamma_n}\frac{\overline{g}_x'(z)}{C'(z)}\right)\sim\left(2-\frac{129}{512n^2}\right)\left(\frac12-\frac1{4n^2}\right)\\
&=1-\frac{641}{1024n^2}+\O\left(\frac1{n^3}\right).
\end{align*}
The result of Theorem \ref{thm:commbinary} follows immediately. 
\end{proof}

\subsection{The associative and commutative model.}\label{sec:ass-comm-tautology}

The generating function of associative commutative And/Or trees $P(z)$ is given in \eqref{eq:polya2} and \eqref{eq:polya1}. Note that $\hat{P}(z)=\check{P}(z)$. Let $\delta_n$ be the dominant positive singularity of $\hat{P}(z)$,
and hence also of $P(z)$. To get $\delta_n, \hat{P}(\delta_n)$ and
$P(\delta_n)$ we need to solve the system
\begin{equation*}
\left\{
\begin{array}{l}
y = e^y\cdot \Pi(z)-1-y-2nz\\
1 =e^y\cdot \Pi(z)-1,
\end{array}
\right.
\end{equation*}
with $\pi(z)=\exp(\sum_{i\geq 2}\hat{P}(z^{i})/i)=1+n^2z^2+\O(z^3)$,
(since $\hat{P}(z)=2nz+\O(1/n^{2})$). Therefore $\pi(z)\sim1$ for $z=\O\left(\frac1n\right)$ and $n$ tending
to infinity, hence the second equation gives $e^{y(z)}\sim2$ or $y(z)\sim\ln(2)$.
Inserting this value into the first equation gives
$y=\frac{1+2nz}{2}$
and thus the first order asymptotic of $\delta_n$ is $\delta_n\sim\frac{2\ln2-1}{2n}$.
\begin{thm}\label{thm:asscommt}
The limiting probability of the function $True$ in the binary commutative model,
$\IP_{n}^{a,c}(True)$, is given by
\[\lim_{m\to\infty}\IP_{n,m}^{a,c}(True)=\frac{(2\ln2-1)^2}{4n}+\O\left(\frac{1}{n^2}\right).\]
\end{thm}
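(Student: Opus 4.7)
The plan follows the three-stage template of Theorems~\ref{thm:assoc-constant} and~\ref{thm:commbinary}: first I generalise the Kozik-type theorem to the associative--commutative model, second I show that almost all tautologies are simple tautologies, and third I count the simple tautologies realised by a single variable and apply Lemma~\ref{lem:limprob}. For stage one, I combine the half-embedding technique of Section~\ref{sec:embed} with the associative pattern $R$ of~\eqref{eq:Rpattern}: given $t\in\P$, one inductively orders the children of each node using either the $\hat{N}$ or the $\check{N}$ partial pattern according to the label of the parent, yielding an injection $\P\hookrightarrow R[\P]$, and a minimal embedding is then selected with the fewest $R[R]$-restrictions. Subcriticality of $R$ for $P(z)$ is proved analogously to Lemma~\ref{assoc-subcritical} using the exponential closure relations~\eqref{eq:polya1}, and combining the proofs of Theorems~\ref{thm:Jakub-assoc} and~\ref{thm:Jakub-comm-bin} yields that trees whose minimal embedding has $k$ $R[R]$-restrictions represent an asymptotic proportion of order $O(1/n^k)$.

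Stage two reproduces the proof of Proposition~\ref{st-assoc} almost verbatim: a tautology has no essential variables, so every restriction must be a repetition; assigning all $R$-pattern leaves to $False$ and propagating inductively rules out $x/x$ repetitions and forces any $x/\bar{x}$ repetition to sit among the first-level $R$-pattern leaves of the minimal embedding. Stratification (which forbids $\land$ and $\lor$ from being consecutive) then joins the two occurrences to the root by $\lor$-only-paths, so the tree is a simple tautology. For stage three, a simple tautology realised by $x$ in which $x$ and $\bar{x}$ appear exactly once at depth one has an $\lor$-root whose children form a multiset of the leaves $x$, $\bar{x}$ together with an arbitrary multiset drawn from $\hat{P}(z)-2z$; P\'olya's multiset construction gives
\begin{equation*}
\widetilde{ST}^x(z) = z^2 \exp\!\left(\sum_{i\geq 1}\frac{\hat{P}(z^i)-2z^i}{i}\right),
\end{equation*}
and the analogue of Proposition~\ref{prop-trick}, a consequence of stage one, bounds the remaining contributions by $O(1/n^2)$. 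Setting $G(z) = n\,\widetilde{ST}^x(z)$ and expanding $\hat{P}(z) = \ln 2 - b_{\hat{P}}\sqrt{1-z/\delta_n} + O(1-z/\delta_n)$ around $\delta_n$, Lemma~\ref{lem:limprob} reduces the limit to the ratio of square-root coefficients $b_G/b_P$; a short computation using $P(z)=2\hat{P}(z)-2nz$ simplifies this ratio to $n\delta_n^2(1+O(1/n))$, and substituting $2n\delta_n = 2\ln 2 - 1 + O(1/n)$ yields the announced $(2\ln 2 - 1)^2/(4n) + O(1/n^2)$.

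The main obstacle is stage one: unlike in Section~\ref{sec:comm-tautology}, the higher-order summands $\hat{P}(z^i)$ for $i\geq 2$ here only decay as $O(1/n^{i-1})$ (since $\hat{P}(z) = 2nz+O(z^2)$), so the analogue of Proposition~\ref{prop:forcommcases} cannot be obtained by truncating to finitely many pattern-leaf classes but must be established through the full exponential composition, checking that the singular behaviour of $\exp(\sum_{i\geq 1}\hat{P}(z^i)/i)$ is still controlled by that of $\hat{P}(z)$ at $z=\delta_n$. Once that subcriticality bookkeeping is in place, the remainder of the argument is a routine adaptation of Sections~\ref{sec:assoc} and~\ref{sec:comm-tautology}.
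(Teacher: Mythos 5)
Your proposal follows the paper's own three-stage argument essentially verbatim: the same generalisation of Kozik's theorem via $R$-pattern half-embeddings (Theorem~\ref{thm:Jakub-ass-comm}), the same reduction to simple tautologies (Proposition~\ref{prop:asscommst}), and the identical P\'olya/multiset formula $ST^x(z)=z^2\exp\bigl(\sum_{i\geq1}(\hat{P}(z^i)-2z^i)/i\bigr)$ evaluated at $\delta_n$ with $P(z)=2\hat{P}(z)-2nz$ to obtain $n\delta_n^2\sim(2\ln2-1)^2/(4n)$. (Only a cosmetic slip: stratification forbids \emph{equal} labels on consecutive levels, not $\land$ and $\lor$ from being consecutive.)
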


To prove the theorem we will again use mobiles, using the unambiguous pattern $R = \{\hat{N},\check{N}\}$ from Section~\ref{sec:assoc}, given in~\eqref{eq:Rpattern}. We can prove that its coefficients $A_l(y)$ are subcritical for $\P$.

\begin{lem}
Let $p(x,y)=\sum_{l\geq 0}A_l(y)x^l$ being the generating function
of the pattern language $R$. The functions $A_l(y)$ are subcritical for $P(z)$.
\end{lem}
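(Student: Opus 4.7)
The plan is to mimic the proof of Lemma~\ref{lem:subcritical}: start from the explicit closed form of $\hat{p}(x,y)$ derived in the proof of Lemma~\ref{assoc-subcritical}, expand it as a power series in $x$, and show that each coefficient $A_l(y)$ is a rational function of $y$ whose only pole lies at $y=-1$.

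Concretely, I would first rewrite the discriminant appearing in that formula as $(y+1)^2 - 4nx(y+3) + 4n^2 x^2$ and factor $(y+1)^2$ out of the square root, so that
\[\hat{p}(x,y) = \frac{2nx - y - 1}{2} + \frac{y+1}{2}\sqrt{1 - \frac{4nx(y+3)}{(y+1)^2} + \frac{4n^2 x^2}{(y+1)^2}},\]
the branch being chosen so that $\hat{p}(0,0)=0$. Applying the binomial series to the square root and collecting the coefficient of $x^l$ will then express each $A_l(y)$ as a rational function of $y$ whose only pole is $y=-1$; in particular its radius of convergence around the origin is exactly $1$. By the symmetry $\hat{p}=\check{p}$ and the relation $p(x,y) = 2\hat{p}(x,y) - 2nx$, the same property transfers to the coefficients of $p(x,y)$ itself.

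To conclude subcriticality it then suffices to check that this radius strictly exceeds the value one must substitute for $y$ at the dominant singularity. Because of the stratification of $R$, that value is
\[\hat{P}(\delta_n) = \check{P}(\delta_n) = \frac{1 + 2n\delta_n}{2} \sim \ln 2,\]
as recalled in the singularity analysis at the beginning of Section~\ref{sec:ass-comm-tautology}. Since $\ln 2 < 1$, for $n$ large enough every $A_l$ is analytic on a closed disc strictly containing $\{|y| \leq \hat{P}(\delta_n)\}$, which is exactly what subcriticality requires.

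The main obstacle is purely one of bookkeeping: one has to pick the correct branch of the square root, and carefully track how the leading factor $y+1$ combines with the $(y+1)^{-2k}$ denominators produced by the binomial expansion so as to verify that the resulting $A_l(y)$ genuinely has $y=-1$ as its only singularity. No new ingredient beyond what is already present in Lemmas~\ref{assoc-subcritical} and~\ref{lem:subcritical} seems to be required.
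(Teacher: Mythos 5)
Your proof follows essentially the same route as the paper's: start from the closed form of $\hat{p}(x,y)$ obtained in Lemma~\ref{assoc-subcritical}, factor $(y+1)^2$ out of the discriminant, expand the square root binomially in $x$, and conclude that each $A_l(y)$ is a rational function whose only pole is at $y=-1$, hence of radius of convergence $1$, which exceeds the relevant substitution value. If anything you are slightly more careful than the printed argument: your discriminant $(y+1)^2-4nx(y+3)+4n^2x^2$ is the correct expansion of $(2nx-y-1)^2-8nx$ (the paper's $(y+1)^2-4nx(n-1+y)$ appears to be a typo), and comparing the radius $1$ with $\hat{P}(\delta_n)\sim\ln 2$ is the meaningful subcriticality check, rather than the paper's comparison with $\delta_n$.
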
 

\begin{proof}
The generating function of the $R$ pattern is $p(x,y)=\hat{p}(x,y)+\check{p}(x,y)-2nx$ where 
\[\hat{p}(x,y)=\frac12\left(2nx-y-1-\sqrt{(2nx-y-1)^2-8nx}\right).\]
Thus,
\begin{align*}
p(x,y)
&= -(y+1) - \sqrt{(y+1)^2}\sqrt{1-\frac{4nx(n-1+y)}{(y+1)^2}}\\
&=-(y+1)+(y+1)\sum_{l\geq 0}\left(\begin{matrix}\nicefrac12\\ l\end{matrix}\right) (y+1)^{-2l}(-4nx)^l (n-1+y)^l,
\end{align*}
since $p(0,0)=0$. Hence, the $A_l(y)$ are rational functions with radius of convergence $1$, which is larger than $\delta_n$. Thus $A_l(y)$ is subcritical for $P(z)$.
\end{proof}

\subsubsection{Generalisation of Kozik's theorem to associative and commutative trees.}

\begin{thm}\label{thm:Jakub-ass-comm}
Let $L$ be a labelled unambiguous pattern language with out-degree different from~1. Further assume that the coefficients $A_l(y)$, given in \eqref{eq:newcoeffs}, are subcritical for $P(z)$.\\
We denote by $L[\P]_{m,n}^{[k]}$ (resp. by $L[\P]_{m,n}^{[\geq k]}$) the number of elements of $L[\P]$ of size $m$ which have $k$ (resp. at least $k$) $L$-restrictions. Then,
\[\lim_{m\rightarrow\infty}\frac{L[\P]_{m,n}^{[\geq k]}}{P_m} =
\O\left(\frac{1}{n^k}\right) \text{ and }
\lim_{m\rightarrow\infty}\frac{L[\P]_{m,n}^{[k]}}{P_m} =
\O\left(\frac{1}{n^k}\right),\]
when $n$ tends to infinity.
\end{thm}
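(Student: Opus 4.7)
The plan is to mirror the proofs of Theorems~\ref{thm:Jakub-assoc} and~\ref{thm:Jakub-comm-bin}: first count, for each unlabelled mobile of size $m$ with $l$ pattern leaves, the number of labellings of its pattern leaves producing exactly $k$ $L$-restrictions; second, transfer this count into a generating-function statement via a Pólya-tree analog of Proposition~\ref{prop:forcommcases}; third, combine the two to read off the $\O(1/n^k)$ order. Since $L$ has out-degrees in $\IN\setminus\{1\}$ and the $A_l(y)$ are subcritical for $P(z)$, all combinatorial and analytic ingredients parallel those of the binary commutative case.

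First, since $\P$-trees plugged into the $\boxempty$-nodes come already leaf-labelled, the only labelling freedom lies on the $l$ pattern leaves, and the counting is identical to that of Proposition~\ref{prop:commlabelings}. Fixing the essential variable set $\V$ of cardinality $v$, the number of labellings of the pattern leaves of a given element of $\tilde{L}[\tilde{\P}]$ creating exactly $k$ $L$-restrictions is
\[(n-v)^{\underline{l-k}}\,2^l\,w_{v,k}(l),\qquad w_{v,k}(l)=\sum_{r=0}^{k}\left\{\begin{matrix}l\\l-r\end{matrix}\right\}\binom{v}{k-r}(l-r)^{\underline{k-r}},\]
with exactly the same combinatorial interpretation of each factor as in the plane case, the factor $n^{m-l}2^{m-l}$ being absent because the leaves of the plugged-in $\P$-trees are labelled in their own right. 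Using $L[\P](m,l)=(2n)^l\tilde{L}[\tilde{\P}](m,l)$, this yields the clean inequality
\[\frac{L[\P]_{m,n}^{[k]}}{P_m}\;\le\;\frac{1}{n^k}\cdot\frac{\sum_{l=0}^{N}L[\P](m,l)\,w_{v,k}(l)}{P_m},\qquad N=n-v+k,\]
the truncation being valid because $(n-v)^{\underline{l-k}}=0$ as soon as $l>N$.

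Second, I would establish a Pólya-tree analog of Proposition~\ref{prop:forcommcases}: if the $A_l(y)$ are subcritical for $P(z)$, then for every polynomial $w$ and every fixed $N$,
\[\lim_{m\to\infty}\frac{\sum_{l=0}^{N}L[\P](m,l)\,w(l)}{P_m}=c_w\]
for some non-negative real $c_w$. The proof transcribes the one of Proposition~\ref{prop:forcommcases} verbatim: the numerator is a finite linear combination of $x^j\partial_x^j\ell_N(x,y)$ evaluated at $y=P(z)$, each summand of which is subcritical for $P(z)$ by hypothesis; the Drmota–Lalley–Woods theorem (already used in Section~\ref{theta} to define $\delta_n$) guarantees a square-root singularity of $P(z)$ at $\delta_n$, so each summand has at worst a square-root expansion at $\delta_n$, and the transfer theorem from~\cite{FlOd} delivers the limit. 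Combining with the first step proves the bound on $L[\P]_{m,n}^{[k]}/P_m$, and the one on $L[\P]_{m,n}^{[\ge k]}/P_m$ follows from the same inequality after replacing $w_{v,k}$ by $\sum_{j\ge k}w_{v,j}$, which is still a polynomial on the truncation range $l\le N$.

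The main obstacle, and essentially the only point where the Pólya character of $\P$ interferes, is checking that the class of functions subcritical for $P(z)$ is closed under the operations $\partial_x^j\ell_N$ and finite linear combinations used in the generating-function manipulation; this is routine because the singularity of $P(z)$ is of square-root type and the $A_l(y)$ already satisfy the subcriticality hypothesis by assumption. No analytic input beyond the Drmota–Lalley–Woods framework invoked in Section~\ref{theta} is needed, so the theorem follows.
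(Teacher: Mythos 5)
Your proposal is correct and follows essentially the same route as the paper: extend the labelling count of Proposition~\ref{prop:commlabelings} to mobiles with an associative plane pattern, bound $(n-v)^{\underline{l-k}}$ by $n^{l-k}$ to extract the $1/n^k$ factor, and conclude via Proposition~\ref{prop:forcommcases}. Note that Proposition~\ref{prop:forcommcases} is already stated for an arbitrary leaf-labelled family $\T$ with subcritical coefficients $A_l(y)$, so no separate ``P\'olya-tree analog'' needs to be proved --- it applies verbatim with $\T=\P$ --- and your explicit $1/n^k$ factor in the displayed inequality is actually more careful than the paper's version, which drops it in an apparent typo.
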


The proof of Theorem~\ref{thm:Jakub-ass-comm} now
is an easy generalisation of Sections~\ref{sec:assoc-generalisation}
and~\ref{sec:comm-generalisation}. We use mobiles on
a associative plane pattern $L$, that is pattern leaves are
on plane paths from the root, while commutative trees have
been substituted in the $\boxempty$-nodes of the plane pattern. 
We can easily extend Proposition~\ref{prop:commlabelings}.

\begin{proof}[Proof of Theorem \ref{thm:Jakub-ass-comm}]
As in previous parts, Proposition~\ref{prop:commlabelings} gives:
\begin{equation*}
\frac{L[\P]_{m,n}^{[k]}}{P_m}=\frac{\sum_{l\in \IN}\tilde{L}[\P](m,l)w_{k,v}(l)(n-v)^{\underline{l-k}}2^l}{P_m};
\end{equation*}
which implies:
\begin{equation*}
\frac{L[\P]_{m,n}^{[k]}}{P_m}\leq \frac{\sum_{l\in \IN}\tilde{L}[\P](m,l)
w_{k,v}(l)n^{l-k}2^l}{P_m}=\frac{\sum_{l\in \IN}L[\P](m,l)w_{k,v}(l)}{P_m}.
\end{equation*}
Hence the result follows from Proposition~\ref{prop:forcommcases}.
\end{proof}

\subsubsection{Non-plane associative tautologies.}

\begin{prop}\label{prop:asscommst}
In the associative and commutative model, asymptotically almost all tautologies are simple tautologies, when $n$ tends to infinity.
\end{prop}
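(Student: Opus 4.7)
The plan is to combine the associative argument of Section~\ref{sec:assoc} (Proposition~\ref{st-assoc}) with the half-embedding trick of Section~\ref{sec:comm-tautology} (Proposition~\ref{prop:comm-st}), using the pattern $R=\{\hat{N},\check{N}\}$ defined in~\eqref{eq:Rpattern}. As in the binary commutative case, a tree $t\in\P$ is not directly an element of $R[\P]$, so I first define a half-embedding $t\mapsto R[t]\in R[\P]$: starting at the root, fix a left-right order on the children; if the current node is labelled by the same connector as its parent (impossible after the first step by stratification) we stop, otherwise if the current node is an $\lor$-node we recurse on all children, and if it is an $\land$-node we recurse on the leftmost child (say) while declaring the remaining subtrees to be $\boxempty$-placeholders. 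This gives an injection $\P\hookrightarrow R[\P]$, and among the possibly several such embeddings we fix, for every tree, one with a minimal number of $R[R]$-restrictions.

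The crucial property of $R$ (noted in the remark just after the proof of Lemma~\ref{assoc-subcritical}) is: if every pattern leaf is assigned $False$, then the pattern computes $False$. I would then argue by cases exactly as in the proof of Proposition~\ref{st-assoc}, but now on $R[R]$-restrictions of the minimal half-embedding. First, a tautology cannot have zero $R[R]$-restrictions: otherwise all pattern leaves could be set to $False$ and the whole tree would evaluate to $False$. Next, a tautology with exactly one $R[R]$-restriction must have this restriction be of type $x/\bar x$ (an $x/x$ repetition would again allow the whole tree to be forced to $False$), and both occurrences must lie among the $R$-pattern leaves of the outer embedding and connected to the root by an $\lor$-only-path. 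Indeed, if an $\land$-node $\nu$ appeared on either path, then since there is only one repetition, the non-plane subtree plugged at $\nu$ carries no repetition in its own $R$-embedding and can thus be forced to $False$, which together with the other subtrees of $\nu$ being forceable to $False$ makes $\nu$, hence the whole tree, evaluate to $False$. By stratification, this forces $t$ to be a simple tautology.

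Finally, I would control the tautologies with at least two $R[R]$-restrictions via Theorem~\ref{thm:Jakub-ass-comm}. Its hypothesis is met: the subcriticality of the coefficients $A_l(y)$ of $R$ for $P(z)$ has been established in the lemma immediately preceding the present proposition, and a straightforward adaptation of Lemma~\ref{lem:power} shows that the coefficients of $R[R]$ remain subcritical for $P(z)$; unambiguity of $R$ (and hence of $R[R]$) is also the content of Section~\ref{sec:assoc}. Using the injection above, one then concludes that the ratio of tautologies with at least two restrictions to all tautologies is $\O(1/n^2)$, whereas the ratio of simple tautologies is of order $1/n$ by the same computation as in Theorem~\ref{thm:asscommt}.

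The main obstacle is the verification that the half-embedding is well-defined and injective in the associative \emph{and} commutative setting: unlike the binary commutative case, internal nodes now have arbitrary out-degree, and the stratification constraint (parent and child cannot share a label) interacts non-trivially with the pattern $R$. One has to check that a minimal embedding always exists, that the subcriticality transfer of Lemma~\ref{lem:power} goes through for associative patterns (no pure binarity was used there, but the bookkeeping on pattern leaves must be redone), and that the forcing-to-$False$ argument remains valid when several siblings are substituted into placeholders under an $\land$-node. Once these checkings are done, the proof is a straightforward transcription of Propositions~\ref{st-assoc} and~\ref{prop:comm-st}.
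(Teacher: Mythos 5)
Your proposal follows essentially the same route as the paper: the paper's proof is precisely to half-embed $t\in\P$ via the pattern $R$ into $R[R][\P]$, pick a minimal embedding (an injection), rule out zero restrictions and show one restriction forces a simple tautology by the forcing-to-$False$ argument of Proposition~\ref{st-assoc}, and dispose of trees with at least two restrictions via Theorem~\ref{thm:Jakub-ass-comm} using the subcriticality lemma and the composition argument of Lemma~\ref{lem:power}. Your write-up is in fact more explicit than the paper's (which compresses all of this into two sentences), and the verification points you flag are exactly the ones the paper leaves implicit.
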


Again we introduce a \emph{half-embedding} of a tree $t$ into the plane:
Start at the root and choose a left to right order of the children of
the root.
If the root is an $\land$-node, proceed with the leftmost child of the
root. If the root was an $\lor$-node, then do the same for every child
of the root. If we end up at a leaf, this is a pattern leaf.
By this procedure we obtain an element of $R[\P]$.
Applying the same procedure to every root of a commutative subtree,
we obtain an element of $R[R][\P]$, we call it an $R[R]$-embedding of
$t$. There are several ways to embed $t$, choose one embedding with
a minimal number of $R[R]$-restrictions.
Again, the function $t\mapsto R[R]_{min}(t)$ represents an injection. 

Now looking at all trees with a minimal $R[R]$-embedding having
exactly one restriction, we can proceed in the same way as in the
proof of Theorem~\ref{thm:assoc-constant} to prove that they are simple tautologies.

\begin{proof}[Proof of Proposition ~\ref{prop:asscommst}]
Let $t\in\P$ be a tree that computes True.
We half-embed $t$ and argue as in Section~\ref{sec:embed} to prove
Proposition~\ref{prop:asscommst} with the help of
Theorem~\ref{thm:Jakub-ass-comm}. 
\end{proof}

\begin{proof}[Proof of Theorem \ref{thm:asscommt}]
We define $ST^{x}(z)$ as previously and obtain
\begin{align}
ST^{x}(z)&=z^2\sum_{\ell\geq0}Z_\ell((\hat{P}(z)-2z,\hat{P}(z^2)-2z^2,\ldots)\nonumber\\
&=z^2\exp\left(\sum_{\ell\geq1}\frac{\hat{P}(z^\ell)-2z^\ell}{\ell}\right),\label{eq:asscommG}
\end{align}
where $Z_\ell(s_1,s_2,\ldots)$ denotes the cycle index of the
symmetric group on $\ell$ elements (c.f.~\cite{PR87}). Hence
\begin{align*}
G'_x(z) &= 2z(\exp\left(\sum_{\ell\geq1}\frac{\hat{P}(z^\ell)-2z^\ell}{\ell}\right)+z^2
\exp\left(\sum_{\ell\geq1}\frac{\hat{P}(z^\ell)-2z^\ell}{\ell}\right)\left(\sum_{\ell\geq1}z^{\ell-1}(\hat{P}'(z^i)-2)\right)\\
&=\frac{2}{z}ST^{x}(z)+ST^{x}(z)\left(\hat{P}'(z)-2+\sum_{\ell\geq2}z^{\ell-1}(\hat{P}'(z^i)-2)\right)
\end{align*}
At $z=\delta_n \sim \frac{2\ln2-1}{2n}$, $ST^{x}(z)$ equals
\[ST^{x}(\delta_n)=\delta_n^2\underbrace{\exp\left(\sum_{i\geq1}\frac{\hat{P}(\delta_n^i)}{i}\right)}_{=2}
\underbrace{\exp\left(\sum_{i\geq1}\frac{-2\delta_n^i}{i}\right)}_{=(1-\delta_n)^2\sim1}\sim2\delta_n^2,\]
Hence, due to $P(z)=2\hat{P}(z)-2nz$,
\begin{align*}
\lim_{z\to\delta_n}\frac{G'_x(z)}{P'(z)}&=\lim_{z\to\delta_n}\frac{ST^{x}(z)\hat{P}'(z)}{P'(z)}\\
&=\lim_{z\to\delta_n}\frac{ST^{x}(z)\hat{P}'(z)}{2\hat{P}'(z)-2n}=\frac{(2\ln2-1)^2}{4n^2},
\end{align*}
and
\[\lim_{z\to\delta_n}\frac{G'(z)}{P'(z)}=n\frac{(ST^{x})'(z)}{P'(z)}\sim\frac{(2\ln2-1)^2}{4n}.\]
\end{proof}

\section{Limiting probabilities of literals.}\label{sec:literals}
In this section, we will compute the limiting probabilities of functions of complexity $L(f)=1$,
that are literals $x$ or $\bar{x}$. Therefore, in analogy to Section~\ref{sec:tautologies}, we will define so called simple $x$-trees.

\begin{df}
A simple $x$ is a tree of the shape
$x\land ST$, $x\lor SC$, $x\land(x\lor\cdots)$ or
$x\lor(x\land\cdots)$,
where $ST$ denotes a simple tautology and $SC$ a simple
contradiction. 
The shape of such trees is depicted in Figures~\ref{fig:simple_x} and ~\ref{fig:simple_x_asso}. 
\end{df}

\begin{figure}[htb]
\begin{center}
\includegraphics[width=0.8\textwidth]{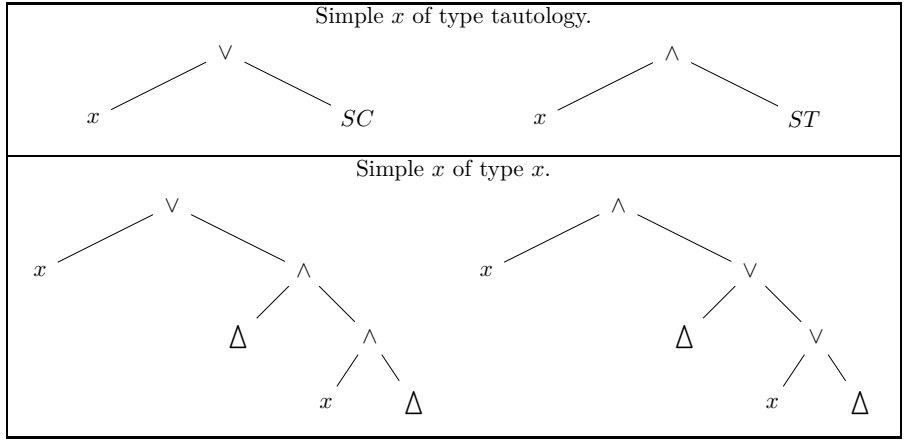}
\end{center}
\caption{The different kinds of simple $x$. Here, $ST$ is a simple
  tautology and $SC$ is a simple contradiction.}
\label{fig:simple_x}
\end{figure}

For all models, we will prove the following proposition:
\begin{prop}\label{prop:sx}
Asymptotically, almost all trees computing the function $x$ are simple $x$. 
\end{prop}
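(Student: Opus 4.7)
To prove Proposition~\ref{prop:sx} I would apply, model by model, the same pattern-language template that was used for tautologies in Sections~\ref{sec:tautologies}--\ref{sec:ass-comm-tautology}: in the binary plane case the pattern $N=\bullet|N\lor N|N\land\boxempty$, in the associative case the pattern $R$ from~(\ref{eq:Rpattern}), and in the commutative cases their half-embeddings $N[N]$ and $R[R]$. The starting structural observation is that if $t$ computes~$x$ then $x$ is essential for~$t$, so every tree representation of $t$ must contain $x$ or $\bar{x}$. Combined with the key property that setting all $L$-pattern leaves to \emph{False} forces $t$ to compute \emph{False} (see the remark following Lemma~\ref{assoc-subcritical}), this forces either $x$ to appear in a pattern leaf (contributing one restriction from an essential variable) or the whole $x$-dependence to be carried by a $\boxempty$-substituted subtree, which then has to itself be a non-trivial tautology or contradiction in~$x$ and produces additional restrictions.

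The core of the proof is then to enumerate, by a root decomposition analogous to the ones in the proofs of Propositions~\ref{st-assoc} and~\ref{prop:comm-st}, all trees with the minimal restriction count that still compute~$x$. If the root is an $\land$, the left (pattern) subtree has to be the single leaf~$x$, and propagating the "set pattern leaves to \emph{False} gives \emph{False}" property into the placeholder shows that the right subtree is either a simple tautology (giving $x\land ST$) or itself starts with $x\lor\cdots$ on an $\lor$-only-path from its root (giving $x\land(x\lor\cdots)$); dually, an $\lor$-rooted tree yields the two shapes $x\lor SC$ and $x\lor(x\land\cdots)$. Any other configuration can be shown to contain at least one additional $L$-restriction by the same type of forcing argument used for simple tautologies, and for the commutative models one works with a minimal $N[N]$- or $R[R]$-embedding exactly as in Section~\ref{sec:embed}.

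The quantitative bound then follows from the generalised Kozik theorems (Theorems~\ref{thm:Jakub}, \ref{thm:Jakub-assoc}, \ref{thm:Jakub-comm-bin} and~\ref{thm:Jakub-ass-comm}): the fraction of trees of size~$m$ that compute~$x$ and carry at least one restriction beyond the minimum is~$\O(1/n^{3})$, while a direct singular analysis of the generating function of simple~$x$ trees --- in the spirit of the computations carried out for $ST^{x}(z)$ and $\widetilde{ST}^{x}(z)$ in Sections~\ref{sec:catalan-true} and~\ref{sec:assoc} --- gives a contribution of order~$\O(1/n^{2})$, as expected for a function of complexity~$1$. The main obstacle is the structural classification in the middle step, where the essential variable~$x$ takes over the role that the $x/\bar{x}$ repetition played for simple tautologies; once this bookkeeping is handled, Proposition~\ref{prop:sx} is obtained by the same limit-ratio argument as the one used throughout Section~\ref{sec:tautologies}.
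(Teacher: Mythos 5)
Your proposal follows essentially the same route as the paper's own (sketched) argument: classify the trees computing $x$ whose minimal $N[N]$- resp.\ $R[R]$-embedding has exactly two restrictions, show by the ``set all pattern leaves to $False$'' forcing argument that these are precisely the four simple-$x$ shapes, and invoke Theorem~\ref{thm:Jakub} and its generalisations to discard trees with three or more restrictions. The only addition worth noting is the shortcut the paper records in Section~\ref{sec:summary}: simple $x$ trees are exactly the single-leaf minimal tree of $x$ expanded once, so the proposition also follows immediately from the general expansion results of Section~\ref{sec:gen-constants} without redoing the restriction bookkeeping.
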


We state the proposition without a complete proof. The proof is easily done by similar arguments as in the previous section, using the patterns $N[N]$ or $R[R]$, respectively. We can prove that every tree $t\in \T$ or $t\in \C$ with exactly two $N[N]$-restrictions computing $x$, and every tree $t\in \A$ or $t\in\P$ with
exactly two $R[R]$-restrictions, respectively, computing $x$, is a simple $x$ tree.
Theorem~\ref{thm:Jakub} and its generalizations imply that those trees give asymptotically almost all trees computing $x$, as it is an easy task to prove that a large tree computing $x$ will have at least two restrictions.
Still we suggest a much simpler argument which proves the proposition in Section~\ref{sec:summary}.

\subsection{Binary plane trees.}

\begin{thm}
The limiting probability of functions of complexity $1$ in the binary plane model is 
\[\lim_{m\rightarrow \infty}\IP_{m,n}(x)=\frac{5}{16n^2}+\O\left(\frac{1}{n^3}\right).\]
\end{thm}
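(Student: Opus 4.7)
The plan is to reduce the problem to counting simple $x$-trees via Proposition~\ref{prop:sx}, decompose the simple $x$-trees by their shape into four natural classes, and then extract the leading behaviour via singular analysis using the generating functions already introduced in Section~\ref{sec:catalan-true}.

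First, Proposition~\ref{prop:sx} guarantees that asymptotically almost every binary plane tree computing the function $x$ is a simple $x$-tree, so the limiting probability equals the limiting ratio $\lim_{m\to\infty} SX^x_m/T_m$, where $SX^x(z)$ is the generating function of simple $x$-trees. Each of the four shape classes admits two mirror variants, depending on whether the displayed $x$-leaf sits as the left or the right child of its parent. The shapes $x\land ST$ and $x\lor SC$ contribute $2z\cdot ST^{all}(z)$ and $2z\cdot SC^{all}(z)$ respectively; by Remark~\ref{rmq:duality} these are equal, and by Proposition~\ref{prop-trick} they both equal $2z\cdot G(z)$ up to a correction of relative order $1/n$. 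The shapes $x\land(x\lor\cdots)$ and $x\lor(x\land\cdots)$, interpreted as $x\land A$ where $A$ has a $\lor$-only-path from its root to an $x$-leaf, and its $\land/\lor$-dual, each contribute $2z\cdot g_x(z)$ by Remark~\ref{rmq:duality}. Hence
\[ SX^x(z) = 4z\,G(z) + 4z\,g_x(z) + (\textrm{corrections of order smaller than }1/n^2), \]
where the corrections come from the inter-shape overlaps (for example, simple tautologies realised by $x$ are automatically in $g_x$, so shape $1$ trees form a sub-class of shape $3$ trees) and from trees computing $x$ that have at least three restrictions with respect to the binary-plane pattern $N[N]$ from Section~\ref{sec:catalan-true}; both contribute to the singular coefficient of $SX^x(z)$ only at order $O(1/n^3)$ by Theorem~\ref{thm:Jakub}, in analogy with the role of Proposition~\ref{prop-trick} in the proof of Theorem~\ref{catalan-constant}.

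Next, I perform the singular analysis around $\rho_n=1/(16n)$. Proposition~\ref{prop:t(z)} gives $T(z)=\frac14-\frac14\sqrt{1-z/\rho_n}$, hence $b_T=1/4$. Theorem~\ref{catalan-constant} combined with Lemma~\ref{lem:limprob} yields $b_G = 3/(16n) + O(1/n^2)$. For $g_x(z)$, substituting $z=\rho_n(1-u)$ into the explicit expression~\eqref{eq:catalangx} turns the inner radicand into $(1+\sqrt{u})^2+(1-u)/n$, and expanding the outer square root in $1/n$ yields
\[ g_x(z) = \frac{1-\sqrt{1-z/\rho_n}}{8n} + O\!\left(\frac{1}{n^2}\right), \]
so $b_{g_x}=1/(8n)+O(1/n^2)$. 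Using $z=\rho_n(1-u)$, the coefficient of $\sqrt{1-z/\rho_n}$ in $4z(G(z)+g_x(z))$ equals $-4\rho_n(b_G+b_{g_x}) + O(1/n^3) = -5/(64n^2)+O(1/n^3)$.

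Applying Lemma~\ref{lem:limprob}, one concludes
\[ \lim_{m\to\infty}\IP_{m,n}(x) = \frac{b_{SX^x}}{b_T} + O\!\left(\frac{1}{n^3}\right) = \frac{5/(64n^2)}{1/4} + O\!\left(\frac{1}{n^3}\right) = \frac{5}{16n^2} + O\!\left(\frac{1}{n^3}\right), \]
as claimed. The main obstacle is the error control in the first step: one must show rigorously that the non-simple trees computing $x$, together with the pairwise overlaps between the four shape classes, contribute only at order $1/n^3$, so that the naive sum of the four shape generating functions already gives the correct singular coefficient at the $1/n^2$ level. This is handled by realising simple $x$-trees as those trees computing $x$ whose minimal $N[N]$-embedding has exactly two restrictions and then applying Theorem~\ref{thm:Jakub}, exactly as Proposition~\ref{prop-trick} does in the binary plane tautology count.
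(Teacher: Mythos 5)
Your proposal is correct and follows essentially the same route as the paper: it splits simple $x$-trees into the tautology type (counted by $4z\,ST(z)$, giving $4\rho_n\cdot\frac{3}{4n}=\frac{3}{16n^2}$) and the $x$-type (counted by $4z\,g_x(z)$, giving $4\rho_n\cdot\frac{1}{2n}=\frac{1}{8n^2}$), and extracts the constants by singular analysis at $\rho_n=\frac{1}{16n}$. The only differences are cosmetic: you carry out the expansion of $g_x$ around the singularity explicitly where the paper invokes Maple, and you are somewhat more explicit about the negligibility of the overlaps between the shape classes.
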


\begin{proof}
We distinguish between simple $x$ of type tautology, which we denote
by $x_T$,
and simple $x$ of type $x$, denoted by $x_X$
(c.f. Figure~\ref{fig:simple_x}). By Proposition~\ref{prop:sx},
we have $\IP_{n}(x)=\IP_{n}(x_T)+\IP_{n}(x_X)$. 

First we compute $\IP_{n}(x_T)=\lim_{m\to\infty}\IP_{m,n}(x_T)$.
Let $ST(z)$ be the generating function computing simple tautologies,
given in Section~\ref{sec:catalan-true}. Of course, $ST(z)$ also counts
contradictions. The generating function $\widetilde{ST}(z)$ of simple $x$
of the first kind is given by $4z\cdot ST(z)$, where the factor $z$
counts the leaf labelled with $x$, and the factor $4$ is explained by
the constant being a tautology or a contradiction, the label of the
internal node then being fixed, and the constant being positioned
left or right. Hence
\[\frac{[z^m]\widetilde{ST}(z)}{[z^m]T(z)}\sim4\rho_n\frac{[z^m]ST(z)}{[z^m]T(z)}=4\rho_n\IP_{n}(True)\sim\frac{3}{16n^2}.\]
For the computation of $\IP_{n}(x_X)$ we use the function $g_x(z)$
given in~\eqref{eq:catalangx}. Let $\tilde{g}_x(z)$ be the function
counting simple $x$ of type $x$. Then $\tilde{g}_x(z)=4zg_x(z)$
by the same arguments as above, hence
\[\frac{[z^m]\tilde{g}_x(z)}{[z^m]T(z)}\sim
4\rho_n\frac{[z^m]g_x(z)}{[z^m]T(z)}\sim\frac{4}{16n}
\lim_{z\to\frac{1}{16n}}\frac{g'(z)}{T'(z)}.\]
Using Maple, we get
$\lim_{z\to\frac{1}{16n}}\frac{g'(z)}{T'(z)}=\frac{1}{2n}+\O\left(\frac{1}{n^2}\right)$, hence 
\[\IP_{n}(x)=\IP_{n}(x_T)+\IP_{n}(x_X)=\frac{3}{16n^2}+\frac{1}{8n^2}+\O\left(\frac{1}{n^3}\right)=
\frac{5}{16n^2}+\O\left(\frac{1}{n^3}\right).\]
\end{proof}

\subsection{Associative plane trees.}

\begin{thm}\label{thm:assocx}
The limiting probability of functions of complexity $1$ in the associative model is 
\[\lim_{m\rightarrow \infty}\IP_{m,n}^a(x)=\frac{546-386\sqrt{2}}{n^2}+\O\left(\frac{1}{n^3}\right).\]
\end{thm}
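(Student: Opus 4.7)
The plan is to follow the approach of Section~4.1 and adapt every ingredient to the associative setting. First, by Proposition~\ref{prop:sx} (whose associative form follows from Theorem~\ref{thm:Jakub-assoc} applied to the pattern $R[R]$ introduced in Section~\ref{sec:assoc}), asymptotically almost every associative tree computing $x$ is a simple $x$. Partition simple $x$ trees into two disjoint families: the family $x_T$ consisting of trees of shape $x\land ST$ or $x\lor SC$, and the family $x_X$ consisting of trees of shape $x\land(x\lor\cdots)$ or $x\lor(x\land\cdots)$, where in each case the root is understood to have exactly two children (the configurations with more than two children contribute only $O(1/n^3)$ by a count of $R[R]$-restrictions). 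Hence
\[
\IP^a_{m,n}(x)=\IP^a_{m,n}(x_T)+\IP^a_{m,n}(x_X)+O(1/n^3).
\]

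For the $x_T$ contribution I would re-use the generating function $\widetilde{ST}^x(z)$ of associative simple tautologies realised by $x$, already obtained in the proof of Theorem~\ref{thm:assoc-constant}. An $x_T$-tree with $\land$-root is built by placing the literal $x$ next to a simple tautology rooted at $\lor$, which is automatically legal under the stratification; summing over the two child orderings, the two dual root-label choices (with simple contradictions replacing simple tautologies on the $\lor$-side), and the $2n$ choices of the realising literal, the generating function of $x_T$-trees is asymptotically $4nz\cdot\widetilde{ST}^x(z)$. By Lemma~\ref{lem:limprob} applied at $\alpha_n=(3-2\sqrt{2})/(2n)$ and by Theorem~\ref{thm:assoc-constant}, this yields
\[
\IP^a_n(x_T)\sim 4\alpha_n\cdot\IP^a_n(True)\sim\frac{2(3-2\sqrt{2})(51-36\sqrt{2})}{n^2}=\frac{594-420\sqrt{2}}{n^2}.
\]

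For the $x_X$ contribution I would introduce $g_x(z)$, the generating function of associative trees admitting an $\lor$-only path from the root to a leaf labelled $x$, and derive it by decomposing at the root in the stratified setting: the root is either a single $x$-leaf, or an $\lor$-node one of whose children is again counted by $g_x$ while the remaining children are arbitrary $\land$-rooted subtrees or leaves. This produces a system involving $\hat{A}(z),\check{A}(z)$ analogous to~\eqref{eq:catalangx} that can be solved in closed form. The $x_X$-generating function is then essentially $4z\cdot g_x(z)$ (two orderings, two dual root labels), and its contribution to the limiting probability is $4\alpha_n\lim_{z\to\alpha_n}g_x'(z)/A'(z)$.

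The main obstacle is the algebraic bookkeeping. Setting up the correct stratified recurrence for $g_x(z)$, solving it, and expanding the two contributions around $\alpha_n$ to order $1/n^2$ requires careful manipulation of nested radicals in $\sqrt{2}$. As for Theorem~\ref{thm:assoc-constant} this step is best delegated to a symbolic algebra system; the $x_X$-contribution should then evaluate asymptotically to $(34\sqrt{2}-48)/n^2$, which when added to the $x_T$-contribution produces the announced constant $546-386\sqrt{2}$.
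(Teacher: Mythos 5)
Your proposal follows essentially the same route as the paper: split the simple $x$ trees into the tautology type and the $x$ type (both with binary root, the higher-arity configurations being negligible), obtain the first contribution as $4\alpha_n\,\IP^a_n(True)\sim(594-420\sqrt{2})/n^2$, and obtain the second from the generating function of $\lor$-rooted trees with a single $x$ in the first generation, giving $(34\sqrt{2}-48)/n^2$ and hence the constant $546-386\sqrt{2}$. The only cosmetic differences are that the paper writes the second generating function explicitly as $g(z)=z\sum_{\ell\geq2}\ell(A(z)-2z)^{\ell-1}$ instead of solving a complement recurrence, and that your phrase ``$2n$ choices of the realising literal'' should read $n$ (the realising variable of the simple tautology), which is already what your formula $4nz\cdot\widetilde{ST}^{x}(z)$ encodes.
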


\begin{proof}
Again we distinguish between simple $x$ of type tautology ($x_T$),
and simple $x$ of type $x$ ($x_X$, cf Figure \ref{fig:simple_x_asso}).
Note that a simple $x$ in the associative case is represented by a tree with a binary root.

\begin{figure}[htb]
\begin{center}
\includegraphics[width=0.4\textwidth]{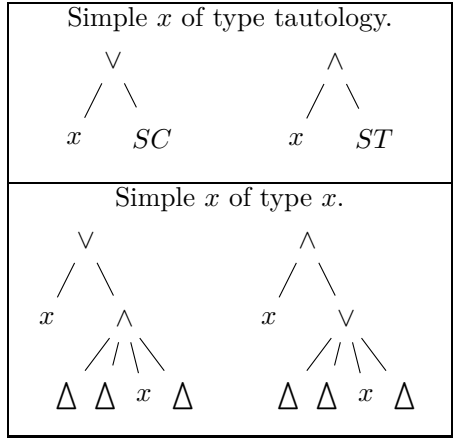}
\end{center}
\caption{The different kinds of simple $x$ in the associative case, up
  to commutativity. $ST$ is a simple tautology and $SC$ a simple
  contradiction.}
\label{fig:simple_x_asso}
\end{figure}

Calculating $\IP_{n}(x_T)=\lim_{m\to\infty}\IP_{m,n}(x_T)$, we obtain
$\widetilde{ST}(z)=4z\cdot ST(z)$ by the same arguments as above and
\[\frac{[z^m]\widetilde{ST}(z)}{[z^m]A(z)}\sim4\alpha_n\frac{[z^m]ST(z)}{[z^m]A(z)}=
 4\alpha_n\IP_{n}^a(True)\sim 4\,\frac{3-2\sqrt{2}}{2n}\,\frac{51-36\sqrt{2}}{n}=\frac{594-420\sqrt{2}}{n^2}.\]

The contribution of $x_X$ is counted by $\tilde{g}(z)=4z g(z)$,
where $g(z)$ counts trees with an $\lor$-root and exactly one leaf
labelled by $x$. Note that the other leaves may not be labelled with
$x$ neither with $\bar{x}$, because this would give a simple
tautology. Then, $g(z)$ is given by
\[g(z)=z\sum_{\ell\geq2}\ell(A(z)-2z)^{\ell-1}.\]
Maple computations give
$\lim_{z\to\frac{3-2\sqrt{2}}{2n}}\frac{g'(z)}{A'(z)}\sim\frac{3\sqrt{2}-4}{n}+\O\left(\frac{1}{n^2}\right)$, and thus
\[\frac{[z^m]\tilde{g}_x(z)}{[z^m]A(z)}\sim4\alpha_n\frac{[z^m]g_x(z)}{[z^m]A(z)}\sim
 4\,\frac{3-2\sqrt{2}}{2n}\,\frac{3\sqrt{2}-2}{n}=\frac{34\sqrt{2}-48}{n^2}+\O\left(\frac{1}{n^3}\right).\]
Adding the two limiting ratios gives the constant in Theorem~\ref{thm:assocx}. 
\end{proof}

\subsection{Binary commutative trees.}

\begin{thm}
The limiting ratio of functions of complexity~$1$ in the binary commutative model is 
\[\lim_{m\rightarrow \infty}\IP_{m,n}^c(x)=\frac{1153}{4096n^2}+\O\left(\frac{1}{n^3}\right).\]
\end{thm}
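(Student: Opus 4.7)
The plan mirrors the structure of the two preceding theorems in this section. By Proposition~\ref{prop:sx}, asymptotically almost every binary commutative And/Or tree computing $x$ is a simple $x$-tree, so I would split the contribution into simple $x$-trees of tautology type $x_T$ (shape $x\land ST$ or $x\lor SC$) and of $x$-type $x_X$ (shape $x\land(x\lor\cdots)$ or $x\lor(x\land\cdots)$) and treat each contribution separately via Lemma~\ref{lem:limprob}.

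For $x_T$, the generating function should be $\widetilde{ST}^c(z) = 2z\,ST(z)$: unlike the plane case (which has factor $4z$) the commutative model carries no left/right factor, but a factor $2$ remains for the choice between an $\land$-root paired with a simple tautology and an $\lor$-root paired with a simple contradiction, these being equinumerous by Remark~\ref{rmq:duality}. Since $x$ is a single leaf and $ST$ has size at least~$2$ the two children are always distinct and no symmetry correction appears. Using $\gamma_n \sim 1/(8n)$ and $\IP_n^c(\mathrm{True}) = \frac{641}{1024\,n} + \O(n^{-2})$ from Theorem~\ref{thm:commbinary}, I expect
\[\lim_{m\to\infty}\frac{[z^m]\widetilde{ST}^c(z)}{[z^m]C(z)} \sim 2\gamma_n\,\IP_n^c(\mathrm{True}) = \frac{641}{4096\,n^2} + \O\!\left(\frac{1}{n^3}\right).\]

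For $x_X$, set $g_x^c(z) := C(z) - \bar{g}_x(z)$, the generating function of binary commutative trees with an $\lor$-only-path from the root to a leaf labelled~$x$ (with $\bar{g}_x$ from \eqref{eq:commg}); by duality the analogous count for $\land$-only-paths is the same. Hence the $x_X$-generating function is $\tilde{g}_x^c(z) = 2z\,g_x^c(z)$, the factor $2$ again accounting for the connector choice. Since $g_x^c(\gamma_n)$ is finite while $C'(\gamma_n)$ diverges, differentiating $2z\,g_x^c(z)$ and applying Lemma~\ref{lem:limprob} reduces the limit to
\[\lim_{z\to\gamma_n}\frac{(\tilde{g}_x^c)'(z)}{C'(z)} = 2\gamma_n\left(1 - \lim_{z\to\gamma_n}\frac{\bar{g}_x'(z)}{C'(z)}\right),\]
and plugging in the value $\lim_{z\to\gamma_n}\bar{g}_x'/C' = 1 - \frac{1}{2n} + \frac{1}{4n^2} + \O(n^{-3})$ already computed inside the proof of Theorem~\ref{thm:commbinary} yields $\tfrac{1}{8\,n^2} + \O(n^{-3})$. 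Summing the two contributions produces $\frac{641+512}{4096\,n^2} = \frac{1153}{4096\,n^2}$ up to order $n^{-3}$, which is the claimed asymptotic.

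The only real subtlety, in my view, is correctly identifying the combinatorial factor~$2$ instead of the plane factor~$4$: commutativity removes the left/right position factor, and I must check that no symmetric-pair correction is needed, which is automatic here because $x$ is a single leaf whose sibling has size at least~$2$. Everything else is bookkeeping that reuses the singular expansions of $\gamma_n$, $\bar{g}_x$, and $\IP_n^c(\mathrm{True})$ obtained in Section~\ref{sec:comm-tautology}.
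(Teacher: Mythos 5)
Your proposal is correct and follows essentially the same route as the paper: the same split into $x_T$ and $x_X$ contributions, the same combinatorial factor $2$ replacing the plane factor $4$, and the same reuse of $\IP_n^c(True)$ and of $\lim_{z\to\gamma_n}\bar{g}_x'(z)/C'(z)$ from the proof of Theorem~\ref{thm:commbinary}. Your explicit justification that no symmetric-pair correction is needed (the two subtrees of the root have different sizes) is a detail the paper leaves implicit, but the argument is otherwise identical.
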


\begin{proof}
Simple $x$-trees are the same as in the plane binary case, but there
is no left-to-right order anymore.
Hence, $\tilde{ST}(z)=2\gamma_nST(z)$, and $ST(z)=C(z)-\bar{ST}(z)$
with $\bar{ST}(z)$ given in \eqref{eq:commG}. Hence 
\[\frac{[z^m]\tilde{ST}(z)}{[z^m]C(z)}\sim2\gamma_n\frac{[z^m]ST(z)}{[z^m]C(z)}=
2\gamma_n\IP_{n}^c(True)\sim
2\,\frac{1}{8n}\left(1+\frac1{8n}\right)\frac{641}{1024n}=
\frac{641}{4096n^2}+\O\left(\frac1{n^3}\right),\]
$\tilde{g}_x(z)=2\gamma_ng_x(z)$, and $g(z)=C(z)-\bar{g}_x(z)$ with
$\bar{g}(z)$ given in~\eqref{eq:commG} and
$\lim\frac{\bar{g}'_x(z)}{C'(z)}$
computed in the proof of Theorem~\ref{thm:commbinary}. Hence 
\[\frac{[z^m]\tilde{g}_x(z)}{[z^m]C(z)}
\sim 2\gamma_n\frac{[z^m]g_x(z)}{[z^m]C(z)}
\sim 2\,\frac{1}{8n}\left(1+\frac1{8n}\right)\frac{1}{2n}
=\frac{1}{8n^2}+\O\left(\frac1{n^3}\right)
=\frac{512}{4096n^2}+\O\left(\frac{1}{n^3}\right).\]
\end{proof}

\subsection{Associative and commutative trees.}

\begin{thm}
The limiting ratio of functions of complexity~$1$ in the associative and commutative model is 
\[\lim_{m\rightarrow \infty}\IP_{m,n}^{a,c}(x)=\frac{(2\ln2-1)^2(2\ln2+1)}{4n^2}+\O\left(\frac{1}{n^3}\right).\]
\end{thm}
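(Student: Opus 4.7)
The strategy mirrors the three preceding subsections. By Proposition~\ref{prop:sx} it suffices to compute the limiting ratio of simple $x$-trees, and these split into simple $x$ of type $T$ (shape $x\land ST$ or dually $x\lor SC$) and simple $x$ of type $X$ (shape $x\land(x\lor\cdots)$ or dually $x\lor(x\land\cdots)$). In both shapes stratification forces the outer root to be binary, and commutativity removes any left/right factor, so each shape carries a combinatorial multiplicity of $2$ (the outer $\land/\lor$ choice) instead of the $4$ appearing in the associative plane case; hence $\IP_n^{a,c}(x) = \IP_n^{a,c}(x_T) + \IP_n^{a,c}(x_X)$.

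For the type-$T$ contribution, the generating function is $\widetilde{ST}(z) = 2z\,G(z)$ where $G(z) = n\,ST^x(z)$ is the generating function of simple tautologies computed in Section~\ref{sec:ass-comm-tautology} (using, as in the proof of Theorem~\ref{thm:asscommt}, that asymptotically almost every simple tautology is realised by a single variable). Lemma~\ref{lem:limprob} combined with the elementary identity $[z^m](zf(z))\sim\delta_n[z^m]f(z)$ and Theorem~\ref{thm:asscommt} yields
\[
\IP_n^{a,c}(x_T)\sim 2\delta_n\,\IP_n^{a,c}(True) \sim 2\cdot\frac{2\ln 2 - 1}{2n}\cdot\frac{(2\ln 2 - 1)^2}{4n} = \frac{(2\ln 2 - 1)^3}{4n^2}.
\]

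For type $X$, I would introduce $g(z)$, the generating function of trees with $\lor$-root, one distinguished leaf $x$ at depth~$1$, and no leaf $\bar x$ at depth~$1$ (the last condition prevents a simple tautology from being created). Since the children of an $\lor$-rooted tree form a Polya multiset of elements of $\hat P$, pulling out the $x$-leaf and forbidding $x,\bar x$ among the remaining children gives
\[
g(z) = z\left[\exp\left(\sum_{\ell\geq 1}\frac{\hat P(z^\ell)-2z^\ell}{\ell}\right)-1\right],
\]
with $\widetilde g(z) = 2z\,g(z)$ enumerating the type-$X$ simple $x$-trees. Exploiting the singularity relation $e^{\hat P(\delta_n)}\pi(\delta_n)=2$ established before Theorem~\ref{thm:asscommt} (so the exponential factor in $g$ evaluates to $\sim 2$ at $\delta_n$), and noting that $\hat P'$ is the only diverging contribution to $g'$ while $P'(z) = 2\hat P'(z) - 2n \sim 2\hat P'(\delta_n)$, a short calculation gives $\lim_{z\to\delta_n} g'(z)/P'(z) \sim \delta_n$, whence
\[
\IP_n^{a,c}(x_X) \sim 2\delta_n\cdot\delta_n = 2\delta_n^2 \sim \frac{(2\ln 2 - 1)^2}{2n^2}.
\]
Adding the two pieces gives $\IP_n^{a,c}(x) \sim (2\ln 2 - 1)^2(2\ln 2 + 1)/(4n^2)$, as claimed. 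The main technical point is getting the Polya specification of $g(z)$ right: one must subtract exactly the two offending leaves $x$ and $\bar x$ inside the exponent so that the remaining children never spoil the identity $x\land t = x$ or create extra copies of the distinguished leaf, and one must verify via the generalisation of Kozik's theorem from Section~\ref{sec:ass-comm-tautology} that trees of type $X$ with more $R[R]$-restrictions contribute only $O(1/n^3)$ and are safely absorbed in the error term.
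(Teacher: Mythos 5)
Your proposal is correct and follows essentially the same route as the paper: split simple $x$-trees into the type-$T$ and type-$X$ shapes with commutative multiplicity $2$, use $\widetilde{ST}(z)=2z\,ST(z)$ together with Theorem~\ref{thm:asscommt} for the first part, and the P\'olya specification $g(z)=z\bigl(\exp\bigl(\sum_{\ell\geq1}(\hat P(z^\ell)-2z^\ell)/\ell\bigr)-1\bigr)$ with $\lim_{z\to\delta_n}g'(z)/P'(z)\sim\delta_n$ for the second. In fact your intermediate constants $\frac{(2\ln2-1)^3}{4n^2}$ and $\frac{(2\ln2-1)^2}{2n^2}$ are the ones that actually sum to the stated result, whereas the paper's printed displays contain two factor-of-$2$ slips (it writes $\IP^{a,c}(True)\sim\frac{(2\ln2-1)^2}{8n}$ and uses $\frac{2\ln2-1}{4n}$ for the $g_x$-ratio, contradicting its own Theorem~\ref{thm:asscommt} and its own limit computation).
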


\begin{proof}
Again, $\widetilde{ST}(z)=2\delta_nST(z)$, with $ST(z)=nST^x(z)$ and $ST^x(z)$ given in \eqref{eq:asscommG}, and 
\begin{align*}
\frac{[z^m]\widetilde{ST}(z)}{[z^m]P(z)} &\sim 2\delta_n\frac{[z^m]ST(z)}{[z^m]P(z)}=2\delta_n\IP_{m}^{a,c}(True)\\
& \sim2\frac{(2\ln2-1)}{2n}\frac{(2\ln2-1)^2}{8n}=\frac{(2\ln2-1)^3}{8n^2}+\O\left(\frac1{n^3}\right).
\end{align*}
Moreover $g_x(z)$ is given by
\[g_x(z)=z+z\left(\exp\left(\sum_{\ell\geq1}\frac{\hat{P}(z^\ell)-2z^\ell}{\ell}\right)-1\right),\]
and
\[g_x'(z)=1+\frac{1}{z}(g_x(z)-z)+g_x(z)\left(\sum_{\ell\geq1}z^{\ell-1}(\hat{P}(z^\ell)-2)\right).\]
Since $g_x(\rho)\sim2\rho$, we get
\[\lim_{z\to\delta_n}\frac{g_x'(z)}{P'(z)}\sim\lim_{z\to\delta_n}\frac{g_x(z)\hat{P}'(z)}{2\hat{P}'(z)-2n}\sim\frac{2\rho}{2}=\frac{2\ln2-1}{2n},\]
and finally, with $\tilde{g}_x(z)=2\delta_ng_x(z)$, 
\[\frac{[z^m]\tilde{g}_x(z)}{[z^m]P(z)}
\sim2\delta_n\frac{[z^m]g_x(z)}{[z^m]P(z)}
\sim2\,\frac{(2\ln2-1)}{2n}\,\frac{(2\ln2-1)}{4n}
=\frac{(2\ln2-1)^2}{4n^2}+\O\left(\frac1{n^3}\right).\]
\end{proof}

\section{Limiting probability of a general function.}\label{sec:gen-constants}

In the previous sections, we have studied functions of complexity zero and one. In this section we are interested in the limiting probability of functions of higher complexity. To prove Theorem~\ref{catalan-theta}, Kozik showed that asymptotically almost all trees computing a function $f$ have a ``simple~$f$'' shape.
To be more precise, they are obtained from a minimal tree by a single well-defined expansion, that
is a special tree attached to a node of a minimal one. In this section we generalise this result to all models
and give bounds for the number of such expansions.

\subsection{The binary plane case.}

The goal of this section is to prove existence and bound the constant $\lambda_f$ appearing in Theorem~\ref{catalan-theta}. We show the following result. 
\begin{prop}
\label{lambda-catalan}
For all Boolean functions $f$,
\[\frac{8L(f)-3+\ell}{16^{L(f)}}\, M_f\leq \lambda_f \leq
\frac{4L(f)^2+4L(f)-3}{16^{L(f)}}\, M_f\]
where $M_f$ is the number of minimal trees representing~$f$ and $\ell=\lceil\frac{L(f)}{2}\rceil$ for $L(f)>1$ and $0$ for $L(f)=1$.
\end{prop}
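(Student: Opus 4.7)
The strategy is to identify the trees contributing to the leading order $1/n^{L(f)+1}$ of $\IP_n(f)$ and count them tightly. By Theorem~\ref{thm:Jakub} applied to the pattern $N[N]$, these are exactly the trees computing $f$ with $L(f)+1$ $N[N]$-restrictions; since each of the $L(f)$ essential variables contributes one restriction, the dominant trees carry exactly one extra repetition beyond the minimum.

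The first step is to show that every such near-minimal tree decomposes as a minimal tree $t_0$ for $f$ together with a single \emph{expansion} that inserts one repetition while leaving the computed function invariant. At any node $\nu$ of $t_0$, an \emph{ST-expansion} replaces the subtree $s$ rooted at $\nu$ by $s\land ST$, $ST\land s$, $s\lor SC$ or $SC\lor s$, where $ST$ is a simple tautology and $SC$ a simple contradiction; correctness follows from $s\land\text{True}=s$ and $s\lor\text{False}=s$. At a leaf labelled $x$, four additional \emph{nested} expansions are available, of the shape $x\land(x\lor h)$, $(x\lor h)\land x$, $x\lor(x\land h)$ or $(x\land h)\lor x$ with $h$ arbitrary, the function being preserved by absorption while only the $x/x$ pair is the added repetition. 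Nested expansions at non-leaf nodes would duplicate several essential variables simultaneously and therefore fall outside the leading order.

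The second step is to compute the contribution of each expansion via Lemma~\ref{lem:limprob}. The minimal tree contributes the factor $\rho_n^{L(f)}=(16n)^{-L(f)}$; an ST-expansion variant contributes $\rho_n^{L(f)}\cdot 3/(4n)$ by Theorem~\ref{catalan-constant}, and a nested leaf variant contributes $\rho_n^{L(f)}\cdot 1/(2n)$ via the analysis of $g_x(z)$ in Section~\ref{sec:literals}. Summing four ST-variants at each of the $2L(f)-1$ nodes and four nested variants at each of the $L(f)$ leaves yields a baseline count of $4(2L(f)-1)\cdot 3/4+4L(f)\cdot 1/2=8L(f)-3$ per minimal tree, in units of $M_f/(16^{L(f)} n^{L(f)+1})$, already matching the $L(f)=1$ case exactly.

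The lower bound $\frac{8L(f)-3+\ell}{16^{L(f)}}M_f$ is then obtained by exhibiting $\ell=\lceil L(f)/2\rceil$ further configurations, supported on pairwise non-adjacent positions in each minimal tree (typically chosen as a maximum independent set among the leaves), whose expanded trees are verified to be distinct from those already counted. The upper bound $\frac{4L(f)^2+4L(f)-3}{16^{L(f)}} M_f$ is obtained by a liberal enumeration bounding all potentially admissible (minimal tree, expansion) pairs and including combinations at pairs of node positions simultaneously, which accounts for the quadratic term $4L(f)^2$. The main obstacle is controlling the multiplicity with which a given near-minimal tree arises as an expansion of different minimal trees for $f$: the coincidences are governed by the structural overlap among the $M_f$ minimal trees, and their precise enumeration seems out of reach in general, which explains the gap between the two bounds for $L(f)>1$ and their equality at $L(f)=1$, where the minimal tree has a unique node.
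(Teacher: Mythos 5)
Your overall strategy is the paper's: by Kozik's analysis the non-negligible trees computing $f$ are minimal trees expanded once, and $\lambda_f$ is assembled from $\rho_n^{L(f)}=(16n)^{-L(f)}$ times the number of admissible expansions weighted by $w_1=\frac{3}{4n}$ (simple tautologies/contradictions) and $w_2=\frac{1}{2n}$ (trees counted by $g_x$). Your baseline $4(2L(f)-1)\cdot\frac34+4L(f)\cdot\frac12=8L(f)-3$ is exactly the paper's count of T-expansions at all $2L(f)-1$ nodes plus X-expansions at the $L(f)$ leaves. But the two places where the statement actually has content beyond this baseline are both handled incorrectly. First, you assert that nested (X-) expansions at non-leaf nodes ``duplicate several essential variables simultaneously and therefore fall outside the leading order.'' This is false: an X-expansion attaches $x\lor h$ (or $x\land h$) with $h$ restriction-free, so it adds exactly one restriction wherever it is attached; such expansions are valid at internal nodes too (e.g.\ at any node joined to an $x$-leaf by an $\lor$-only path, and at the fathers of leaves), and they contribute at leading order. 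They are precisely what the upper bound must absorb: at most $4$ X-expansions per node per essential literal, over $2L(f)-1$ nodes and at most $L(f)$ literals, giving $4L(f)(2L(f)-1)\cdot\frac12=4L(f)^2-2L(f)$ and hence $4L(f)^2+4L(f)-3$. Your alternative explanation of the quadratic term via ``combinations at pairs of node positions simultaneously'' is backwards: performing two expansions adds two restrictions and is negligible, so it cannot be the source of the $4L(f)^2$.

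Second, the $+\ell$ term is not an unspecified collection of configurations on an independent set of leaves; in the paper it comes from X-expansions at the \emph{fathers} of the leaves. Each leaf labelled $x_i$ permits an X-expansion realized by $x_i$ at its father (one connective, determined by the father's level, on either side); in the worst case leaves pair up at common fathers so that only $\lceil L(f)/2\rceil$ distinct father--variable pairs are guaranteed, each yielding $2$ expansions, whence the extra $2\lceil L(f)/2\rceil\cdot\frac12=\ell$ (and $0$ when $L(f)=1$, the single leaf having no father). Without identifying these concrete additional valid expansions, your lower bound is only the baseline $8L(f)-3$, which does not prove the stated inequality for $L(f)>1$. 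Your closing worry about a single expanded tree arising from several minimal trees is a reasonable one (and the paper is silent on it), but it is not what creates the gap between the two bounds; that gap is entirely due to the shape-dependent count of valid X-expansions at internal nodes.
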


The proof of this proposition is based on a result by Kozik~\cite{Kozik}. He proved that the set of non negligible trees computing~$f$ is exactly the set of trees obtained by \emph{expanding} a minimal tree of~$f$ once.

\begin{rmq}
It is interesting to see that these bounds are equal when the complexity of the function is~1 and give the actual bound for literals we computed in Section~\ref{sec:literals}.
\end{rmq}

\begin{df}
Let $t$ be an And/Or tree computing $f$, $\nu$ one of its nodes
and $t_\nu$ the subtree rooted at $\nu$.
An \emph{expansion} of $t$ in $\nu$ is a tree obtained by
replacing the subtree $t_\nu$ rooted at $\nu$ by a tree
$t_\nu \diamond t_e$ where $\diamond \in \{\land,\lor\}$
and where $t_e$ is an And/Or tree.
We will say that such an expansion is \emph{valid} when
the expanded tree still computes~$f$.
\end{df}

Kozik has shown that the only non-negligible valid expansions
that are to be considered are:
\begin{itemize}
\item The T-expansions: a valid expansion is a T-expansion if the inserted
subtree $t_e$ is a simple tautology (resp. a simple contradiction) and
if the new label of $\nu$ is $\land$ (resp. $\lor$).
\item The X-expansions: a valid expansion is an X-expansion if the inserted
subtree $t_e$ is (up to commutativity and associativity) of the shape
$x\lor...$ (resp. $x\land...$) where $x$ is an essential variable
of~$f$ and if the new label of $\nu$ is $\land$ (resp. $\lor$).
\end{itemize}

\begin{rmq}
It is important to note that all these expansions are valid.
\end{rmq}

In the following, we will name a T-tautology expansion an $\land$-T-expansion (resp. an $\lor$-T-expansion) if the new label of $\nu$ is $\land$ (resp. $\lor$), and the same for X-expansions.

\begin{proof}[Proof of Proposition \ref{lambda-catalan}]
In a Catalan And/Or tree, a T-expansion is possible in every node (without changing the computed function). At each node, we can expand by an $\lor$-T-expansion and by an $\land$-T-expansion, both on the right side and on the left side. As a minimal tree of $f$ is of size $L(f)$, it has $2L(f)-1$
nodes and there are $\lambda_T(f)=4(2L(f)-1)M_f$ different T-expansions that can be done in all minimal trees computing~$f$.

We can now consider $\lambda_X(f)$, that is, the number of X-expansions (which do not change the computed function~$f$). This number depends heavily on the shape of the minimal trees of~$f$, therefore, we only give bounds for this number. An $\land$-X-expansion (resp. $\lor$-X-expansion) realized by $x_i$ is valid at each leaf labelled by $x_i$, as well as at each node connected to one of them by an $\lor$-only (resp. $\land$-only) path, and at all of its sons. Let us note that:
\begin{itemize}
\item[-] At each leaf, we can do at least one $\lor$-X-expansion and one $\land$-X-expansion, both to the
right and to the left, which gives a contribution of $4L(f)$. Further, we could do either one
$\land$-X-expansion or one $\lor$-expansion to both sides at its father, depending on its level.
But two different leaves having the same father could be labelled by the same variable. Hence this contributes $2\lceil\nicefrac{L(f)}{2}\rceil$ if $L(f)>1$, as if $L(f)=1$ a minimal tree consists of a single leaf with no father, but else all leaves share its father with one leaf of the same label in the worst case.

\item[-] at each node (internal or external), we can do at most 4 $X$-expansions (we choose between $\land$ and $\lor$ and between right and left side) for each different literal that appear on the leaves. There are at most $L(f)$ different literals appearing on the leaves of a minimal tree and a minimal tree has exactly $2L(f)-1$ (internal or external) nodes. Therefore, $4L(f)(2L(f)-1)M_f$ is an
upper bound of $\lambda_X(f)$.
\end{itemize}

Therefore, we have the following bounds:
\begin{equation}\label{bounds-catalan}
5L(f)M_f \leq \lambda_X(f) \leq 4L(f)(2L(f)-1)M_f.
\end{equation}

To end the proof of Proposition \ref{lambda-catalan},
we only need to note that:
\[\frac{\lambda_f}{n^{L(f)+1}} =
M_f \rho_n^{L(f)}\left(\lambda_T(f) w_1+\lambda_X(f) w_2\right),\]
where $w_1$ is the limiting ratio of simple tautologies
(resp. simple contradiction), and $w_2$ is the limiting ratio
of trees of shape $x\lor...$ for $x$ a variable. Thanks to
computations made in Section~\ref{sec:tautologies}
(c.f. Theorem~\ref{catalan-constant}), we know that
$w_1=\frac{3}{4n}$.
Moreover, the generating function $g_x$ defined in
Section~\ref{sec:catalan-true} counts exactly the number of trees
that can be used for an X-expansion (according to a variable~$x$).
Therefore,
\[\lim_{z\rightarrow \rho_n}\frac{g_x'(z)}{T'(z)}\sim \frac{1}{2n} = w_2,\]
and with~\eqref{bounds-catalan} we prove Proposition~\ref{lambda-catalan}.
\end{proof}

\subsection{The associative plane case.}\label{sec:generalass}

The associative case appears to be similar to the binary plane case. We prove the following theorem:

\begin{thm}\label{lambda-assoc}
In the associative plane model, let $f$ be a non-constant Boolean function, whose complexity is denoted by $L(f)$. Then
\[\IP^a_n(f)\sim \frac{\lambda^a_f}{n^{L(f)+1}},\]
when $n$ tends to infinity, where $\lambda^a_f$ is depending on the number of possible expansions
of minimal trees of~$f$. For $L(f)>1$ we have \[\left(\frac{3-2\sqrt{2}}{2}\right)^{L(f)}\left[133L(f)+153-(93L(f)+108)\sqrt{2}\right]M_f \leq \lambda_f\]
\[\lambda_f \leq \left(\frac{3-2\sqrt{2}}{2}\right)^{L(f)}\left[-(12L(f)^2-247L(f)+51)+(9L(f)^2-174L(f)+36)\sqrt{2}\right]M_f,\]
where $M_f$ is the number of minimal trees computing~$f$.
\end{thm}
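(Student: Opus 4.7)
The plan is to mirror the strategy used in the binary plane case (Proposition~\ref{lambda-catalan}), adapting it to the associative setting. The proof splits into three steps: (i)~establish that asymptotically almost all trees computing $f$ arise from a minimal tree of $f$ via a single T- or X-expansion; (ii)~enumerate these valid expansions subject to the stratification constraint; (iii)~combine the counts with the limiting ratios of simple tautologies and X-trees computed in Theorems~\ref{thm:assoc-constant} and~\ref{thm:assocx}.

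For step (i), I would use the pattern $R[R]$ of Section~\ref{sec:assoc} together with Theorem~\ref{thm:Jakub-assoc}. Extending the argument of Proposition~\ref{st-assoc} to a general $f$, any tree computing $f$ has at least $L(f)+1$ $R[R]$-restrictions (the $L(f)$ needed to realise the essential information of $f$ plus one repetition inherited from the expansion), with equality characterising exactly those trees obtained from a minimal tree by a single expansion. Trees with strictly more restrictions contribute $\O(1/n^{L(f)+2})$ by Theorem~\ref{thm:Jakub-assoc} and are therefore negligible.

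For step (ii), I would classify the valid expansions. A T-expansion at a node $\nu$ replaces the subtree $t_\nu$ by $t_\nu \diamond t_e$ where $t_e$ is a simple tautology (if $\diamond = \lor$) or a simple contradiction (if $\diamond = \land$); stratification forces the new node's label to differ from both the label of the parent of $\nu$ and the root label of $t_\nu$, so the availability of such an expansion depends on these labels. An X-expansion is defined analogously with $t_e$ being a tree whose root is $\lor$ (resp.\ $\land$) containing an essential variable on an $\lor$-only-path (resp.\ $\land$-only-path) from the root. For the lower bound I count only the expansions that are guaranteed to be available at every node regardless of the shape of the minimal tree, while for the upper bound I allow up to four choices of label/side per node and up to $L(f)$ essential variables per X-expansion, yielding the quadratic dependence on $L(f)$.

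Finally, step (iii) uses the identity
\begin{equation*}
\frac{\lambda_f^a}{n^{L(f)+1}} = M_f\, \alpha_n^{L(f)} \bigl(\lambda_T(f)\, w_1 + \lambda_X(f)\, w_2\bigr),
\end{equation*}
with $\alpha_n = (3 - 2\sqrt{2})/(2n)$, the limiting ratio of simple tautologies $w_1 \sim (51 - 36\sqrt{2})/n$ from Theorem~\ref{thm:assoc-constant}, and the limiting ratio of X-trees $w_2 \sim (3\sqrt{2}-4)/n$ extracted from the proof of Theorem~\ref{thm:assocx}. Substitution and simplification yield the claimed bounds. The main obstacle is step (ii): unlike in the binary plane case where all four label/side combinations are always legal, in the associative model stratification forbids certain combinations and some would-be expansions merely increase the arity of an existing internal node rather than creating a fresh one; isolating these cases carefully so as not to double-count, while still producing closed-form bounds that match the leading order, is the delicate part of the argument.
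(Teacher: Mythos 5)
Your overall strategy is the paper's: characterise the asymptotically dominant trees as minimal trees expanded once, count T- and X-expansions, and multiply by the limiting ratios $w_1^a=(51-36\sqrt{2})/n$ and $w_2^a=(3\sqrt 2-4)/n$ via the identity $\lambda_f^a/n^{L(f)+1}=M_f\,\alpha_n^{L(f)}(\lambda_T w_1^a+\lambda_X w_2^a)$. Step (iii) matches the paper exactly. But there are two genuine gaps in steps (i) and (ii).

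First, the pattern $R[R]$ alone does not suffice for general $f$. The paper works with $L=R^{(L(f)+1)}[R\oplus S]$ and $\bar L=R^{(L(f)+1)}[(R\oplus S)^2]$, where $S$ is the dual pattern (all $S$-pattern leaves set to $True$ forces the tree to $True$). The reason is structural: to show that a tree with exactly $L(f)+1$ restrictions collapses to a minimal tree expanded once, one replaces every restriction-free subtree by a $\star$ and simplifies; this requires being able to valuate such a subtree to \emph{either} constant independently of the rest, which needs both $R$ (force $False$) and $S$ (force $True$), and it requires iteration depth $L(f)+1$ so that the levels above the deep subtrees are resolved. With $R[R]$ you can only drive subtrees to $False$, which is enough for tautologies and literals but not for the general star-simplification argument. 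Your heuristic ``$L(f)$ restrictions for the essential information plus one for the expansion'' also needs to be replaced by an actual proof that fewer than $L(f)+1$ $L$-restrictions forces a representation of $f$ of size below $L(f)$.

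Second, your counting scheme in step (ii) is transplanted from the binary case and does not match the associative combinatorics. In the associative model an expansion of the first kind inserts a new child at an internal node $\nu$, contributing $s(\nu)+1$ positions where $s(\nu)$ is its number of sons; expansions of the second kind occur only at leaves and at the root (which admits both kinds). The paper's bounds come from writing $\lambda_T(t)=2L(f)+\sum_{i=1}^N(s(i)+1)+2$ and $\lambda_X(t)=\sum_{i=1}^N d(i)(s(i)+1)+2d(\mathrm{root})+2\sum_{i=1}^N d(i)^2$, then using $\sum_i s(i)=L(f)+N-1$, the range $1\le N\le L(f)-1$, and the lemma $d(i)\le L(f)-N+1$. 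A ``four label/side choices per node'' count will not reproduce the coefficients $133L(f)+153-(93L(f)+108)\sqrt 2$ and their upper-bound counterparts; you would need to redo the count degree-by-degree as above.
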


To prove Theorem \ref{lambda-assoc}, we first have to prove that, as in the binary plane case,
the set of non negligible associative trees computing a Boolean function is the set of trees
obtained from a minimal tree by expanding it once. Moreover, we have to find the non-negligible
expansions that have to be considered. Then, we can prove Theorem~\ref{lambda-assoc}
with the same methods as in the binary plane case.

\subsubsection{Associative expansions.}

Because of the stratified structure of associative trees, we have to be careful with the definition of expansions, which is different to the one in the binary case:

\begin{figure}[htb]
\begin{center}
\includegraphics[width=0.6\textwidth]{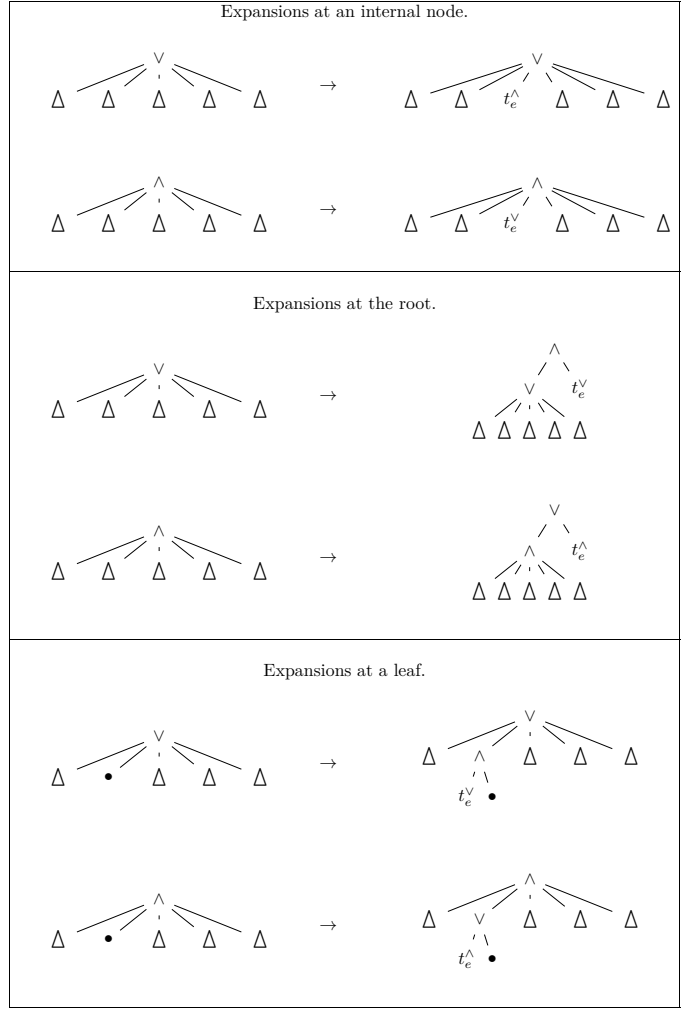}
\end{center}
\caption{The different possible expansions in the associative case. Here, $t_e^\lor$ (resp. $t_e^\land$) stands for an associative tree rooted by $\lor$ (resp. $\land$). The tree pictured is only the subtree rooted at $\nu$, before the expansion and after the expansion.}
\label{expansions-assoc}
\end{figure}

\begin{df}[c.f. Figure \ref{expansions-assoc}]
Let $t$ be an And/Or associative tree computing $f$. We define two types of expansions of $t$. 
\begin{itemize}
\item Let $\nu$ be an internal node of $t$ (possibly the root) with subtrees $t_1,\ldots,t_j$, $j\geq 2$.
An expansion of the first kind of $t$ in $\nu$ is a tree obtained by adding a subtree $t_e$ to $\nu$.
\item Let $\nu$ be the root or a leaf of the tree. The tree obtained by replacing the subtree $t_\nu$
rooted at $\nu$ by $t_e \diamond t_\nu$, where $\diamond\in \{\land,\lor\}$ is chosen such that the
obtained tree is stratified, is an expansion of $t$ in $\nu$ of the second kind.
In this case, $\diamond$ will be called the \emph{new label} of $\nu$.
\end{itemize}
We will say that such an expansion is \emph{valid} when
the expanded tree still computes~$f$.
\end{df}

\begin{rmq}
We have to keep in mind that both types of expansions are possible at the root of the tree.
\end{rmq}

\begin{prop}
The set of non-negligible trees computing a Boolean function $f$ is the set of trees obtained by expanding
a minimal tree of $f$ once. Moreover, the only non-negligible valid expansions are:
\begin{itemize}
\item The T-expansions: a valid expansion is a T-expansion if the inserted subtree $t_e$ is a simple tautology
(resp. a simple contradiction) and if the new label of $\nu$ is $\land$ (resp. $\lor$).
\item The X-expansions: a valid expansion is an X-expansion if the inserted subtree $t_e$ is
(up to commutativity) of the shape $x\lor\dots$ (resp. $x\land\dots$) where $x$ is an essential variable of $f$ and if the label of the father of $t_e$ is $\land$ (resp. $\lor$).
\end{itemize}
\end{prop}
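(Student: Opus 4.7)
The plan is to adapt Kozik's argument from the binary plane case to the associative setting, using the pattern $R$ from Section~\ref{sec:assoc} together with Theorem~\ref{thm:Jakub-assoc}. Since $R[\A]=\A$, every tree $t\in\A$ admits a canonical $R[R]$-decomposition and hence a well-defined number of $R[R]$-restrictions. By Theorem~\ref{thm:Jakub-assoc}, trees with $k$ such restrictions contribute $O(A_m/n^k)$ to the count, so the dominant contribution to $\IP^a_n(f)$ comes from trees with the fewest restrictions, and the non-negligible trees are those reaching exactly $L(f)+1$ restrictions.

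The first step is to prove a lower bound: any $t\in\A$ computing $f$ has at least $L(f)$ essential-variable restrictions in its $R[R]$-decomposition. The key tool is the \emph{False-setting} property of $R$ recorded after Lemma~\ref{assoc-subcritical}, namely that setting every $R$-pattern leaf to $False$ makes the whole pattern compute $False$. It follows that each essential variable of $f$ must appear in some $R$-pattern leaf of $t$, and more strongly that a minimal skeleton of $f$ must be embedded at the pattern level in order for the whole tree to compute $f$, yielding the lower bound. Trees saturating it are essentially minimal trees of $f$, which are of size $L(f)$ and therefore produce only finitely many terms, negligible in the $m\to\infty$ limit.

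The heart of the proof is the characterisation of trees computing $f$ with exactly $L(f)+1$ $R[R]$-restrictions. The extra restriction, over and above the $L(f)$ forced by essential variables, can only come from a single additional $R$-pattern leaf carrying either a repeated literal or an extra occurrence of an essential variable. Applying the $False$-setting argument exactly as in the proof of Proposition~\ref{st-assoc}, the only configurations compatible with the value $f$ are: (i) a simple tautology (resp.\ contradiction) grafted at a node whose new label is $\land$ (resp.\ $\lor$), that is, a T-expansion; or (ii) a subtree of shape $x\lor\cdots$ (resp.\ $x\land\cdots$) with $x$ essential, grafted so that its father is $\land$ (resp.\ $\lor$), that is, an X-expansion. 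Every other expansion either destroys $f$, violates the stratification, or introduces at least two further restrictions and is therefore negligible by Theorem~\ref{thm:Jakub-assoc}.

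The main obstacle will be the stratified nature of the associative model. It forces the two-kind definition of expansion given above, and the case analysis has to handle first-kind expansions (new sibling at an internal node, reusing its label) and second-kind expansions (at a leaf or at the root, switching the label) in a uniform way. Moreover, the $R[R]$-decomposition must be applied so that each large tree computing $f$ is produced by exactly one (minimal tree, expansion) pair, which is the associative analogue of the injectivity discussion in Section~\ref{sec:embed}; without this, the constant $\lambda_f^a$ would be mis-counted. Once these two points are settled, combining the lower bound, the exact characterisation of the $L(f)+1$-restriction trees, and Theorem~\ref{thm:Jakub-assoc} yields the proposition.
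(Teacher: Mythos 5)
Your overall strategy (bound the number of pattern restrictions from below, discard trees with too many restrictions via the Kozik-type theorem, then classify the trees attaining the minimum) is the right one, but two technical choices make the argument break down as written. First, you work only with the pattern $R$ and its square $R[R]$, invoking the \emph{False}-setting property of $R$. That property lets you neutralise a restriction-free subtree in one direction only: you can force it to compute $False$, which is what is needed when the subtree hangs under an $\lor$-node, and which is why it suffices in Proposition~\ref{st-assoc}, where the goal is merely to exhibit a falsifying assignment of a would-be tautology. For a general $f$, the reduction to a minimal tree requires replacing restriction-free subtrees by stars that can be evaluated to \emph{either} constant independently of the rest of the tree --- a subtree hanging under an $\land$-node must be settable to $True$. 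This is why the paper's proof works with $R\oplus S$, where $S$ is the dual pattern with the $True$-setting property; with $R$ alone the simplification rules \eqref{simpl} cannot be justified, and neither your lower bound (a large tree with too few restrictions cannot compute $f$) nor your case analysis of the expansion subtree $t_e$ (where the auxiliary subtrees $s_1,s_2$ must be set to $True$ or to $False$ depending on their position) goes through. Second, two levels of pattern composition are not enough for $L(f)\geq 1$: a tree with $L(f)+1$ restrictions can place restrictions at both levels of $R[R]$ and still hide essential or repeated variables inside the placeholder subtrees, so you cannot conclude that those subtrees are restriction-free and starrable. The paper iterates the pattern $L(f)+1$ times and composes with $(R\oplus S)^2$ precisely so that the $L(f)+1$ restrictions are exhausted before the last level and every leaf at level $L(f)+3$ is non-essential and non-repeated.

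There is also a missing step in the passage from ``exactly one extra restriction'' to ``a minimal tree expanded once at a single node''. A priori the starred (neutralisable) subtrees could be scattered over the tree, yielding a minimal tree with several independent grafts. The paper rules this out by showing that the least common ancestor of the stars must disappear during the simplification: if it survived, at least two rules of type \eqref{simpl} would have been applied, each destroying at least one restriction, leaving a tree computing $f$ with at most $L(f)-1$ leaves, contradicting minimality. You flag the injectivity of the decomposition as an ``obstacle'' but treat it as a bookkeeping issue for the constant $\lambda_f^a$, whereas it is the structural argument that localises the expansion; without it the first assertion of the proposition is not established.
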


Before proving the result, let us introduce the pattern language we will need:
\begin{equation}
\left\{
\begin{array}{l}
\hat{P} = \bullet | \check{P} \land \check{P} | \check{P} \land \check{P} \land \check{P} | \ldots \\
\check{P} = \bullet | \hat{P} \lor \boxempty | \hat{P} \lor \boxempty \lor \boxempty | \ldots \\
S =\{\hat{P},\check{P}\};\\
\hat{N} = \bullet | \check{N}\land \boxempty | \check{N}\land \boxempty
\land \boxempty | \ldots \\
\check{N} = \bullet | \hat{N}\lor \hat{N} | \hat{N}\lor \hat{N} \lor
\hat{N} | \ldots \\
R = \{\hat{N},\check{N}\}.
\end{array}
\right.
\end{equation}

\begin{rmq}
\begin{itemize}
\item[-] The pattern $S$ has the following property: if all $S$-pattern leaves of a tree are set to $True$, then the whole tree itself computes $True$.
\item[-] The pattern $R$ has the following property: if all $R$-pattern leaves of a tree are set to $False$, then the whole tree itself computes $False$.
\end{itemize}
\end{rmq}

\begin{df}
Let us consider the pattern $L^{r}$ and $\bar{L}=L^{r+1}$. For $i\leq r$, a leaf is \emph{on level $i$} if it is a $L^{(i)}$-pattern leaf but not a $L^{(i-1)}$-pattern leaf. A $\bar{L}$-pattern leaf which is not a $L$-pattern leaf is \emph{on level $r+2$}. 
\end{df}

\begin{proof}
The proof is inspired by the proof of the corresponding result for binary trees (see~\cite[Theorem 6.1]{Kozik}). The idea is to take a tree computing~$f$, and to replace every subtree which can be evaluated
to $True$ or $False$ independently from the rest of the tree by a $\star$.
Then, we state that simplifying the stars gives a minimal tree of the considered Boolean function.\\

Let $f$ be a Boolean function whose complexity is $L(f)$. We consider the pattern languages $L=R^{(L(f)+1)}[R\oplus S]$ and $\bar{L}=R^{(L(f)+1)}[(R\oplus S)^2]$, where the pattern leaves of $R\oplus S$ are all pattern leaves of the pattern $R$ and the pattern $S$. 

Let $t$ be a tree of size $L(f)$ representing $f$. If the root of $t$ is labelled with $\lor$ (resp. $\land$), then using a simple contradiction (resp. tautology) $\Phi$, the new tree $\Phi \land t$ (resp. $\Phi \lor t$) still represents the function $f$. Since the limiting ratio of simple tautologies or contradictions is equal to $\Theta(1/n)$ and the $L(f)$ nodes of $t$ are counted by $z^{L(f)}$, for large enough $n$ we obtain the following lower bound:
$$\IP^a_n(f)\geq \frac{\alpha}{n^{L(f)+1}}.$$
Consequently, we can neglect trees with at least $L(f)+2$ $\bar{L}$-restrictions.

Further, we can prove that trees with strictly less than $L(f)+1$ $L$-restrictions cannot represent $f$, because we would then be able to simplify this tree to a tree computing $f$ of size smaller than $L(f)$.

Finally, we know that the set of non negligible trees computing $f$ is the set of trees with exactly $L(f)+1$ $L$-restrictions and $L(f)+1$ $\bar{L}$-restrictions. Thus, we know that every variable appearing in a pattern leaf on level $L(f)+3$ is non essential and not repeated among the $\bar{L}$ pattern leaves. Therefore, each subtree of $t$ rooted on level $L(f)+3$ with its parent node on level $L(f)+2$\footnote{i.e. the parent node of the root of $t$ has been a $\boxempty$-vertex in the $L(f)+2^{nd}$ step} can be replaced by a $\star$, because it can be valuated to $False$ by assigning all the $R$-pattern leaves to $False$ or to $True$ by assigning all the $S$-pattern leaves to $True$. Both valuations can be done independently
from the rest of the tree because these pattern leaves are non essential and not repeated.
After this operation all the remaining leaves are $\bar{L}$-pattern leaves.

Moreover, we replace every leaf of the tree by a $\star$ which is not an essential variable of~$f$
and which appears only once among the leaves of $t$. We have obtained a tree $t^\star$.

We can simplify the tree in order to obtain a tree without stars according to the following rules:
\begin{center}
\begin{align}
\nonumber 	\star \lor \ldots \lor \star \equiv \star \quad
			&\quad \star \land \ldots \land \star \equiv \star\\
\label{simpl} \star \lor \ldots \lor t_1 \lor \ldots \lor t_j \equiv True \quad
			& \quad \star \land \ldots \land t_1 \land \ldots \land t_j \equiv False \\
\nonumber 	True \lor t_1 \lor \ldots \lor t_j \equiv True \quad
			& \quad False \land t_1 \land \ldots \land t_j \equiv False \\
\label{last}	False \lor t_1 \lor \ldots \lor t_j \equiv t_1 \lor \ldots \lor t_j \quad
		 	& \quad True \land t_1 \land \ldots \land t_j \equiv t_1 \land \ldots \land t_j
\end{align}
\end{center}
where $t_1,\ldots,t_j$ are subtrees containing no stars.

The tree $\hat{t}$ obtained after this process still computes $f$.
Let us prove that the obtained tree is a minimal tree of $f$ and that
the least common ancestor\footnote{the least common ancestor of a set of nodes is the node $\nu$ farthest from the root
such that the tree rooted at $\nu$ contains all the nodes of the considered set.}
of the stars in $t^\star$ has been simplified during the process.

The tree $t^\star$ contains at least one $\star$ since $t$ is big enough to have
at least one leaf on level $L(f)+3$. Moreover, the tree $t^\star$ has exactly $L(f)+1$ restrictions and no constants.
The final tree $\hat{t}$ has no $\star$, and no constant. Therefore,
a rule of type \eqref{simpl} must have been used at least once during the simplification process,
because the rules \eqref{simpl} are the only ones simplifying stars.
But, using such a rule simplifies at least one subtree with at least one non-star pattern leaf.
Therefore, this leaf had to be labelled by a variable which was either essential or repeated.
Therefore, the simplifying process has at least simplified one restriction and the obtained tree has at most $L(f)$ leaves.
Thus, $\hat{t}$ is a minimal tree of $f$.

Moreover, let $\nu$ be the least common ancestor of the stars in $t^\star$.
Let us assume that $\nu$ does not disappear during the simplification process.
Therefore, two stars have been simplified independently during the process, and thus
at least two rules of type \eqref{simpl} have been applied, which means that at least
two restrictions have disappeared during the simplification process. Therefore, the simplified
tree $\hat{t}$ still computes $f$ and contains at most $L(f)-1$ restrictions, i.e. at most $L(f)-1$ leaves,
which is impossible since the complexity of $f$ is $L(f)$.
Therefore, the node $\nu$ has dissappeared during the simplification process.

Finally, let us remark that the last simplifying rule applied during the process has to be of the kind \eqref{last}. We have thus shown that a non-negligible tree computing $f$ is indeed a minimal tree expanded once.

The second part of the proof is to understand which are the non-negligible valid expansions, i.e. those which do not change the computed function $f$.

First, let us remark that, thanks to Theorem~\ref{thm:Jakub} and its generalizations, the trees obtained by expanding with a tree $t_e$ with more than two $(R \oplus S)^2$ restrictions are negligible. On the other hand there has
to be at least one $(R \oplus S)^2$ restriction in $t_e$, because if there was
none, we could assign this tree to $False$ or $True$ independently from the rest of the tree.
Since the expanded tree must still compute the function $f$, by simplification we would obtain
a tree computing $f$ being smaller than the minimal tree, which is impossible.

\paragraph*{First case:} The tree $t_e$ contains one repetition and no essential variable of $f$.
Then, it has to compute a constant function (i.e. $True$ or $False$). If it does not, by previous arguments on tautologies, the subtree can be valuated to $True$ or $False$ independently from the rest of the tree.
Thus, by simplification, we can obtain a tree, smaller than the minimal tree, computing $f$,
which is a contradiction. Therefore, the expanding tree $t_e$ is a simple tautology or a
simple contradiction (thanks to Proposition~\ref{st-assoc}).
Moreover, as the expanded tree still has to compute $f$,
if the father of $t_e$ is an $\land$ (resp. $\lor$), $t_e$ is a simple tautology
(resp. contradiction), which gives a $T$-expansion. 

\paragraph*{Second case:} The subtree $t_e$ contains no repetition and one essential variable,
let us say $x$. Then, the essential variable has to appear on the first level.
If it does not, the Boolean expression has shape
$s_1 \land (s_2 \lor x)$ or $s_1 \lor (s_2 \land x)$ (up to commutativity).
Moreover, the trees $s_1$ and $s_2$ have no $R \oplus S$-restrictions and therefore
we can make them $False$ or $True$ independently from the rest of the tree.
Then, we can valuate the whole tree either to $False$ or $True$ independently from $x$,
which is impossible since $x$ is an essential variable of $f$.

If an $\land$-X-expansion $t_e$ according to the variable $x_i$ is valid in a node $\nu$,
then every $\land$-X-expansion $t'_e$ according to this variable $x$ is valid
at $\nu$ (and as well for $\lor$-X-expansions).
\end{proof}

\subsubsection{Computing bounds for $\lambda_f$.}

\begin{proof}
As in the binary case, we have to compute the limiting ratio of T-expansions and X-expansions, and the number of nodes where the different kinds of expansions are allowed. Let us denote by $M_f$ the number of minimal trees representing a given Boolean function~$f$ of complexity~$L(f)$.

The limiting ratio of $T$-expansions is the limiting ratio of
simple tautologies, which has already been computed in
Section~\ref{sec:assoc}. We have that $w_1^a=\frac{51-36\sqrt{2}}{n}$.

Let $g_x$ be the generating function of associative trees
rooted at $\land$ (resp. $\lor$) and containing exactly
one~$x$ in the first generation. Then,
\[g_x(z)=z\sum_{j\geq 2} j (A(z)-2z)^{j-1}.\]
Since the set of trees with more than one~$x$ in the first
generation is negligible in front of the set of trees with
exactly one~$x$ in the first generation, we can assume that:
\[w_2^a=\lim_{z\rightarrow \alpha_n} \frac{g'_x(z)}{A'(z)}=\frac{3\sqrt{2}-4}{n}\]
is the limiting ratio of $\land$-X-expansions (resp. $\lor$-X-expansions).

As in the binary case, the number $\lambda_X(f)$ of $X$-expansions and the number $\lambda_T(f)$ of $T$-expansions allowed in a minimal tree depend on the shape of the considered minimal tree. Given a minimal tree $t$ of $f$, let us number its internal nodes from $1$ to $N$. Let us denote by $s(i)$ the number of sons of the internal node~$i$. Moreover, let us denote by $d(i)$ the number of sons of the node~$i$ which are leaves. Then, if $\lambda_T(t)$ is the number of different $T$-expansions in the minimal tree $t$ of $f$, we have that:
\[\lambda_T(t) = 2L(f) + \sum_{i=1}^N (s(i)+1) + 2,\]
where $2L(f)$ is the number of different $T$-expansions allowed
at the leaves of the tree (if the parent node is labelled by $\land$
(resp. $\lor$), only simple tautology (or contradiction respectively)
T-expansions are allowed), $s(i)+1$ is the number of different
$T$-expansions allowed at the node $i$ (the number of different
positions at node $i$ is $s(i)+1$); and $2$ is the number of
expansions
allowed at the root by pushing the root to the first generation and
adding a new root with two sons. Therefore,

\[\lambda_T(t) = 2L(f)+ \sum_{i=1}^N s(i) + N +2 = 2L(f) + (L(f)+N-1) +N +2,\]

and since $1\leq N \leq L(f)-1$, we obtain that:

\[3(L(f)+1)M_f \leq \lambda_T(f) \leq (5L(f)-1)M_f.\]

Further, given a leaf $x_i$, an $\land$-X-expansion realized by~$x_i$ is allowed at itself, at its father and at all its sisters (brothers that are reduced to a leaf), because two sisters cannot have the
same label. Indeed, if two sisters have the same label (or even opposite labels), then, the considered tree can be simplified, and since we consider a minimal tree, this is impossible. Therefore,  if $L(f)>1$,
\[\lambda_X(t) = \sum_{i=1}^N d(i)(s(i)+1) + 2d(root) + 2\sum_{i=1}^N d(i)^2.\]

\begin{lem}
For all $i$, $d(i)\leq L(f)-N+1$.
\end{lem}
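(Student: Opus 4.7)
The plan is to establish the bound by a double-counting argument, exploiting the fact that every internal node of an associative tree has out-degree at least $2$.

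First I would introduce the companion quantity $c_j := s(j) - d(j)$, the number of internal children of the internal node $j$. Two counting identities are immediate: since every leaf has exactly one parent, $\sum_{j=1}^{N} d(j) = L(f)$; and since every internal node other than the root is the child of exactly one other internal node, $\sum_{j=1}^{N} c_j = N - 1$. Note that both identities are independent of the choice of $i$ (in particular they hold whether or not $i$ is the root).

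Next I would invoke the defining property of associative trees, namely $s(j) \geq 2$ for every internal $j$. Rewritten, this says $d(j) \geq 2 - c_j$. Summing this inequality over $j \neq i$ and substituting the identity $\sum_{j \neq i} c_j = (N-1) - c_i$ gives
\[
\sum_{j \neq i} d(j) \;\geq\; 2(N-1) - \bigl((N-1) - c_i\bigr) \;=\; N - 1 + c_i \;\geq\; N - 1.
\]
Plugging this into the leaf identity yields $d(i) = L(f) - \sum_{j \neq i} d(j) \leq L(f) - N + 1$, which is the claim.

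I do not anticipate a serious obstacle: the argument is essentially the handshake lemma for rooted trees, with the leaf/internal split made explicit, and the hypothesis $s(j) \geq 2$ is exactly what allows the sum over the other internal nodes to absorb at least $N-1$ leaves. The only minor points requiring care are the edge cases where $i$ is the root (so $i$ contributes nothing to the $c_j$-sum on the left) and where all children of $i$ are already leaves (so $c_i = 0$, making the bound tight), and both fit neatly into the calculation above.
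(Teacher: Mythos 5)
Your proof is correct and is essentially the same counting argument as the paper's: both rest on the identity that the total number of sons of internal nodes is $L(f)+N-1$ together with the out-degree bound $s(j)\geq 2$. Your version, which separates leaf-children $d(j)$ from internal children $c_j$ and argues directly rather than by contradiction, is in fact the more carefully stated rendering --- the paper's proof writes $\sum_i d(i)=L(f)+N-1$ and $\sum_{i\neq i_0}d(i)\geq 2(N-1)$ where it really means the corresponding statements for $s(i)$, and your bookkeeping repairs exactly that slip.
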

\begin{proof}
Let us assume that there exist an internal node $i_0$ such that
$d(i_0)>L(f)-N+1$.
It is easy to see that, as each node except the root has a unique
father, $\sum_{i=1}^N d(i)=L(f)+N-1$.
Moreover,
\begin{equation*}
\sum_{i=1}^N d(i)>\sum_{i\neq i_0}d(i)+(L(f)-N+1)  > 2(N-1)+(L(f)-N+1)
\end{equation*}
since every internal node has at least two sons.
Therefore, $\sum_{i=1}^N d(i)>L(f)+N-1$, which is a contradiction.
\end{proof}
Therefore, thanks to the lemma,
\begin{align*}
\lambda_X(t) &\leq \sum_{i=1}^N d(i) + (L(f)-N+1)\sum_{i=1}^N s(i) + 2(L(f)-N+1) + 2(L(f)-N+1)\sum_{i=1}^N d(i)\\
&\leq L(f)+ (L(f)-N+1) [(N+L(f)-1) + 2 + 2L(f)]\\
&\leq L(f)\cdot (3L(f)+2).
\end{align*}
On the other hand,
\[\lambda_X(t) \geq 3\sum_{i=1}^N d(i) + 2\sum_{i=1}^N d(i) = 5 L(f),\]
and
\[\lambda_X(f)\geq 5 L(f) M_f.\]
Finally, since
\[\frac{\lambda_f}{n^{L(f)+1}}=M_f \alpha_n^{L(f)} (\lambda_T(f) w_1^a + \lambda_X(f) w_2^a),\]
we get that:
\[\left(\frac{3-2\sqrt{2}}{2}\right)^{L(f)}\left[133L(f)+153-(93L(f)+108)\sqrt{2}\right]M_f \leq \lambda_f\]
\[\lambda_f \leq
\left(\frac{3-2\sqrt{2}}{2}\right)^{L(f)}\left[-(12L(f)^2-247L(f)+51)+(9L(f)^2-174L(f)+36)\sqrt{2}\right]M_f.\]
\end{proof}

\subsection{The binary commutative case.}\label{sec:generalcomm}

\begin{thm}
In the binary commutative case, let $f$ be a non-constant Boolean function, whose complexity is denoted by $L(f)$. Then
\[\IP^c_n(f)\sim \frac{\lambda^c_f}{n^{L(f)+1}},\]
when $n$ tends to infinity, where $\lambda^c_f$ is depending on the number of possible expansions
of minimal trees of~$f$, and
\[\frac{1794L(f)-641}{512\cdot 8^{L(f)}} M_f \leq \lambda^c_f \leq \frac{(2L(f)-1)(512L(f)+641)}{512\cdot 8^{L(f)}} M_f,\]
where $M_f$ is the number of minimal trees computing~$f$.
\end{thm}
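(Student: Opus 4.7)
The strategy mirrors the associative plane case of Section~\ref{sec:generalass}: first establish a structure theorem showing that only minimal trees expanded once contribute asymptotically, then count the admissible expansions, and finally combine with the limiting ratios already computed in Theorem~\ref{thm:commbinary}.

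Step~1 (structure theorem). I would work through the half-embedding injection $\mathcal{C}\hookrightarrow N[\mathcal{C}]$ of Section~\ref{sec:embed} and use the iterated pattern $L=N^{(L(f)+1)}[N\oplus N^{\vee}]$ together with $\bar L=N^{(L(f)+1)}[(N\oplus N^{\vee})^{2}]$, where $N^{\vee}=\bullet\,|\,N^{\vee}\land N^{\vee}\,|\,\boxempty\lor N^{\vee}$ is the dual of $N$ (assigning $True$ to all of its pattern leaves forces the whole expression to $True$). The commutative generalisation of Kozik's theorem, Theorem~\ref{thm:Jakub-comm-bin}, which applies thanks to Lemmas~\ref{lem:power} and~\ref{lem:subcritical}, then shows that trees computing $f$ with more than $L(f)+1$ $\bar L$-restrictions are negligible. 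Conversely, if a tree computing $f$ has fewer than $L(f)+1$ $L$-restrictions, the rewriting rules of Section~\ref{sec:generalass} allow us to collapse all unrestricted placeholders to $False$ or $True$ (using $N$ or $N^{\vee}$ respectively) and produce a Boolean expression for $f$ of size strictly less than $L(f)$, contradicting the definition of complexity. Running the same rewriting on a tree with exactly $L(f)+1$ $L$- and $\bar L$-restrictions leaves a single subtree $t_e$ that cannot be collapsed; a direct analysis shows that $t_e$ is either a simple tautology under an $\land$-parent or a simple contradiction under an $\lor$-parent (a $T$-expansion), or a tree of shape $x\lor\cdots$ under an $\land$-parent, resp.\ $x\land\cdots$ under an $\lor$-parent, with $x$ essential (an $X$-expansion), exactly as in the associative case.

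Step~2 (counting expansions in a minimal tree). In the commutative model the ``left/right'' alternative disappears, so the expansion counts are halved with respect to the plane case. A minimal tree has $2L(f)-1$ internal or external nodes, and at each of them two $T$-expansions (one of each label type) are available, giving $\lambda_T^c(f)=2(2L(f)-1)M_f$. For $X$-expansions, the crude lower bound $2L(f)M_f$ comes from the two $X$-types available at each of the $L(f)$ leaves, and the upper bound $2L(f)(2L(f)-1)M_f$ from allowing at most two $X$-types per (essential variable, node) pair.

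Step~3 (assembly). Inserting the asymptotics $\gamma_n^{L(f)}\sim (8n)^{-L(f)}$, the simple-tautology ratio $w_1^c=\frac{641}{1024n}$ (equal by duality to that of simple contradictions, see Theorem~\ref{thm:commbinary}), and $w_2^c\sim\frac{1}{2n}$ (the ratio $\lim_{z\to\gamma_n}g_x'(z)/C'(z)$ computed in the proof of Theorem~\ref{thm:commbinary}), into the identity
\[\IP^c_n(f)\sim M_f\,\gamma_n^{L(f)}\bigl(\lambda_T^c(f)\,w_1^c+\lambda_X^c(f)\,w_2^c\bigr),\]
and substituting the bounds of Step~2 for $\lambda_X^c(f)$ produces the two inequalities displayed in the theorem after elementary algebra.

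The main obstacle is the rigorous execution of Step~1: because $L[\mathcal{C}]\neq\mathcal{C}$, the rewriting must be run on the half-embedded image rather than on the commutative tree itself. One must check that (i) the simplifying rules of Section~\ref{sec:generalass} descend to the commutative equivalence classes in the $\boxempty$-slots, and (ii) the minimal tree of $f$ extracted at the end is independent, up to commutativity, of the particular minimal half-embedding picked. Once these two points are secured, Steps~2 and~3 are straightforward combinatorics and computation, entirely analogous to the associative case.
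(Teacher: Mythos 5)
Your proposal follows essentially the same route as the paper: the paper's proof also reduces to the binary plane case via minimal $[N]$- and $[N\oplus P]$-embeddings (your $N^{\vee}$ is exactly the paper's dual pattern $P$), invokes Theorem~\ref{thm:Jakub-comm-bin} through Lemmas~\ref{lem:power} and~\ref{lem:subcritical}, and uses the identical counts $\lambda_T=2(2L(f)-1)M_f$ and $2L(f)M_f\leq\lambda_X\leq 2L(f)(2L(f)-1)M_f$ together with $w_1^c=\frac{641}{1024n}$, $w_2^c=\frac{1}{2n}$ and $\gamma_n\sim\frac{1}{8n}$. The only cosmetic slip is that your assembly formula carries an extra factor $M_f$ outside the parentheses while your $\lambda_T^c,\lambda_X^c$ already contain it; the stated bounds show you intended a single factor.
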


\begin{rmq}
It is interesting to see that these bounds are equal when the complexity of the function is~1 and give the limiting probability of literals computed in Section~\ref{sec:literals}. 
\end{rmq}

\begin{proof}
The proof relies completely on the binary plane case, doing minimal $[N]$-embeddings and $[N\oplus P]$-embeddings (the plane parts of an $[N\oplus P]$-embeddings are both the plane parts of an $[N]$-embedding or a $[P]$-embedding). It has been proven in Lemmas~\ref{lem:power} and~\ref{lem:subcritical} that Theorem~\ref{thm:Jakub-comm-bin} can be applied to the pattern $[N\oplus P][\C]$, as the generating function of $P$ is the same as the one of $N$. As in the proof of
Theorem~\ref{thm:commbinary}, embedding a tree $t\in\C$ into $N^{(L(f))}[N\oplus P]$ or
$N^{(L(f))}[N\oplus P]^{(2)}$ represents an injection. Hence asymptotically, all trees computing
a function~$f$ are obtained by a single expansion of a minimal tree of~$f$.
The calculation of the bounds can be done in the same way as in
the plane binary case. If we denote by $w_1^c$ the limiting ratio of
simple tautologies and by $w_2^c$ the limiting ratio of
$X$-expansions.
From Section~\ref{sec:comm-tautology} we know that $w_1^c=\frac{641}{1024n}$, and from Section~\ref{sec:literals}
we know that $w_2^c=\frac1{2n}$. Moreover, since asymptotically all trees computing$f$ are obtained by a  single expansion of a minimal tree, we have $\frac{\lambda_f^c}{n^{L(f)+1}} =\gamma_n^{L(f)} (\lambda_T w_1^c+ \lambda_X w_2^c)$ and
\begin{align*}
\lambda_T &= 2(2L(f)-1)M_f\\
2L(f) M_f \leq \lambda_X &\leq 2L(f)(2L(f)-1) M_f,
\end{align*}
since $\gamma_n\sim \frac1{8n}$ when $n\to \infty$, the result follows.
\end{proof}

\subsection{The associative commutative case.}

\begin{thm}
In the associative and commutative model,
let $f$ be a non-constant Boolean function, whose complexity is denoted by $L(f)$:
\[\IP_n^{a,c}(f) \sim \frac{\lambda^{a,c}_f}{n^{L(f)+1}},\]
when $n$ tends to infinity, where $\lambda^{a,c}_f$
is depending on the number of possible expansions
of minimal trees of~$f$. For $L(f)>1$
\[\left(\frac{2\ln 2-1}{2}\right)^{L(f)}\left(\left(\ln^2 2-\frac14\right)L(f)+\ln^22-2\ln2+\frac12\right)M_f\leq \lambda^{a,c}_f\]
\[\lambda^{a,c}_f \leq \left(\frac{2\ln 2-1}{2}\right)^{L(f)}\frac{(2\ln 2-1)(L(f)+1+4\ln 2)L(f)}{4}M_f,\]
where $M_f$ is the number of minimal trees computing~$f$.
\end{thm}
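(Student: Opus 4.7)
The plan is to combine the arguments of Sections~\ref{sec:generalass} and~\ref{sec:generalcomm} in the general associative--commutative setting. First, I would introduce a half-embedding of a tree $t\in\P$ into a mobile of the form $R^{(L(f)+1)}[R\oplus S]$ by iterating the embedding procedure described in Section~\ref{sec:embed} with the pattern language $R$ from~\eqref{eq:Rpattern}, and choose at each step a minimal embedding (one with the fewest $R[R]$-restrictions). A second iteration then produces an element of $R^{(L(f)+1)}[(R\oplus S)^{(2)}]$. Because such minimal embeddings are injective, the number of $t\in\P_m$ whose minimal embedding has $k$ restrictions is bounded by the count of the corresponding mobiles, which is controlled by Theorem~\ref{thm:Jakub-ass-comm}; applicability of that theorem requires the iterated coefficients $A_l(y)$ of $R^{(L(f)+1)}$ to be subcritical for $P(z)$, which follows by combining subcriticality of $R$ (already established before Theorem~\ref{thm:Jakub-ass-comm}) with the iteration argument of Lemma~\ref{lem:power}.

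Next, following the scheme of Section~\ref{sec:generalass}, I would show that the non-negligible trees computing $f$ are precisely those obtained from a minimal tree of $f$ by a single valid expansion. The lower bound $\IP_n^{a,c}(f)\geq \alpha/n^{L(f)+1}$ comes from attaching a simple tautology or contradiction to the root of a minimal tree, using Theorem~\ref{thm:asscommt}. Trees with at least $L(f)+2$ restrictions on the outer layer contribute only $\O(1/n^{L(f)+2})$ and are thus negligible. Conversely, trees with strictly fewer than $L(f)+1$ restrictions on the inner layer can be simplified, via the star-replacement procedure of Section~\ref{sec:generalass}, to trees of size strictly less than $L(f)$ still computing $f$, contradicting minimality. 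Applying the same simplification to a tree with exactly $L(f)+1$ inner and outer restrictions produces a minimal tree and exhibits the original tree as a single expansion of it, whose expanding subtree carries exactly one additional restriction.

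The classification of valid expansions then proceeds as in Section~\ref{sec:generalass}. An expanding subtree with one restriction and no essential variable must, by Proposition~\ref{prop:asscommst}, be a simple tautology (if attached under $\land$) or a simple contradiction (under $\lor$), giving the T-expansions; otherwise it contains a single essential variable $x$, which must appear on the first level, giving an X-expansion of shape $x\lor\dots$ or $x\land\dots$. By Theorem~\ref{thm:asscommt} and the proof of the associative--commutative literal estimate in Section~\ref{sec:literals}, the limiting ratios of such subtrees are
\[w_1^{a,c}=\frac{(2\ln 2-1)^2}{4n}+\O\!\left(\frac{1}{n^2}\right), \qquad
w_2^{a,c}=\frac{2\ln 2-1}{2n}+\O\!\left(\frac{1}{n^2}\right).\]

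Finally, I would count the expansions on a minimal tree $t$ of $f$ with $N$ internal nodes and $d(i)$ leaf-children at the $i$-th internal node. Because the model is now commutative, the plane factor $2$ of Section~\ref{sec:generalass} disappears and the $s(i)+1$ insertion positions at an associative internal node collapse to a single one. This yields $\lambda_T(t)=L(f)+N+1$, so $L(f)+2\leq \lambda_T\leq 2L(f)$. The count $\lambda_X(t)$ becomes, after the same commutative simplifications applied to the formula in Section~\ref{sec:generalass}, a sum over leaves whose bounds one obtains by the same extremal argument $d(i)\leq L(f)-N+1$. Injecting these into
\[\frac{\lambda_f^{a,c}}{n^{L(f)+1}}=M_f\,\delta_n^{L(f)}\bigl(\lambda_T w_1^{a,c}+\lambda_X w_2^{a,c}\bigr)\]
with $\delta_n\sim (2\ln 2-1)/(2n)$ delivers the stated bounds. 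The main obstacle is the bookkeeping in this last step: translating the associative plane counts of Section~\ref{sec:generalass} into correct non-plane counts and obtaining matching extremal bounds requires careful consideration of the two extreme shapes of minimal trees (a single internal node with many leaves versus a maximally deep tree with $N=L(f)-1$), in particular to produce the specific numerical coefficients $\ln^2 2-1/4$ and $(2\ln 2-1)(L(f)+1+4\ln 2)/4$.
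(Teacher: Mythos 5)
Your overall strategy coincides with the paper's: the paper's own proof of this theorem is a two-line reduction to the pattern $R^{(L(f))}[R\oplus S]$ and the arguments of Sections~\ref{sec:generalass} and~\ref{sec:generalcomm}, followed by the formula $\lambda_f^{a,c}/n^{L(f)+1}=M_f\,\delta_n^{L(f)}\bigl(\lambda_T w_1^{a,c}+\lambda_X w_2^{a,c}\bigr)$ together with the bounds $(L(f)+2)M_f\leq\lambda_T\leq 2L(f)M_f$ and $2L(f)M_f\leq\lambda_X\leq (L(f)^2+3L(f))M_f$. Your structural part (minimal embeddings into mobiles, subcriticality of the iterated coefficients via Lemma~\ref{lem:power}, the single-expansion characterisation, and the classification into T- and X-expansions) is exactly what the paper intends, and your count $\lambda_T(t)=L(f)+N+1$ reproduces the paper's $\lambda_T$ bounds.

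Two concrete points prevent the proposal from actually delivering the stated numerical bounds. First, you take $w_2^{a,c}=(2\ln2-1)/(2n)$, whereas the paper's proof uses $w_2^{a,c}=(2\ln2-1)/(4n)$; tracing the arithmetic, it is the latter value, combined with $\lambda_X\leq(L(f)^2+3L(f))M_f$, that yields the stated upper bound $(2\ln2-1)(L(f)+1+4\ln2)L(f)/4$, so with your value the X-contribution comes out twice too large. (To be fair, Section~\ref{sec:literals} displays the limit $\lim g_x'/P'=(2\ln2-1)/(2n)$ and then uses $(2\ln2-1)/(4n)$ in the very next line, so this factor of $2$ needs to be resolved rather than inherited.) Second, you never actually derive bounds for $\lambda_X$ in the commutative--associative setting: you declare the bookkeeping to be ``the main obstacle'' and stop there, but the explicit inequalities in the theorem are precisely the content of that bookkeeping (one needs $2L(f)M_f\leq\lambda_X\leq(L(f)^2+3L(f))M_f$, obtained from the leaf/father/sibling count and the extremal estimate $d(i)\leq L(f)-N+1$), so the displayed bounds are asserted rather than proved.
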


\begin{proof}
The result is easily proven by using the pattern $R^{(L(f))}[R\oplus S]$
and applying arguments of Sections ~\ref{sec:generalass} and~\ref{sec:generalcomm}.
Therefore, we have $\frac{\lambda_f^{a,c}}{n^{L(f)+1}}=\delta_n^{L(f)}(\lambda_T w_1^{a,c}+\lambda_X w_2^{a,c})$. The calculation of the bounds is similar to the computations done in the plane associative case.
From Section~\ref{sec:ass-comm-tautology} we know that $w_1^{a,c}=\frac{(2\ln 2 -1)^2}{4n}$
and in Section~\ref{sec:literals} we obtained $w_2^{a,c}=\frac{2\ln 2-1}{4n}$.
Moreover, $\delta_n \sim \frac{2\ln 2-1}{2n}$. We can show
\begin{align*}
(L(f)+2)M_f&\leq \lambda_T \leq 2L(f)M_f\\
2L(f)M_f&\leq \lambda_X\leq (L(f)^2+3L(f))M_f,
\end{align*}
where the lower bound holds only for $L(f)>1$, and the theorem is proven.
\end{proof}

\section{Summary of results and conclusion.}\label{sec:summary}

Finally, we have, in three steps, understood better the influence of associativity and commutativity on the behaviour of the limiting ditribution on Boolean functions induced by their tree representation. Indeed, we have shown that associativity and commutativity do not change the order of $\IP_{n}(f)$
when $n$ tends to infinity, it is still of order $\Theta\left(n^{-(L(f)+1)}\right)$.
But, in Sections~\ref{sec:tautologies} and~\ref{sec:literals} we showed that associativity and commutativity change the exact limiting ratio of tautologies and literals. We summarize the different constants computed in Figure~\ref{tab:constants}.
Section~\ref{sec:gen-constants} gives bounds for the constants for a general function and gives an interesting result about the shape of an average tree computing a fixed function: it is a minimal tree expanded once. The proofs of most of our results are based on the theory of patterns introduced by Kozik in~\cite{Kozik}.
We have generalised this theory to associative and commutative trees to show most of the results still hold in more general models.
\begin{figure}[htb]
\begin{center}
\begin{tabular}{|c|cccc|}
\hline
 & Catalan & Associative & Commutative & General\\
 & trees & (non-binary) & (non plane) & trees\\
 & & trees & trees & \\
\hline
 & & & & \\
$True$ 	& $\displaystyle\frac{3}{4} = 0.75$ &
$51-36\sqrt{2}\approx 0.0883$ & $\displaystyle\frac{641}{1024}\approx 0.626$ & $\displaystyle\frac{(2\ln 2 -1)^2}{4} \approx 0.0373$ \\
 & & & & \\
$x$	& $\displaystyle\frac{5}{16}\approx 0.313$ &
$546-386\sqrt{2}\approx 0.114$ &
$\displaystyle\frac{1153}{4096}\approx 0.281$ & $\displaystyle\frac{(2\ln 2 -1)^2(2\ln 2+1)}{4}\approx 0.0890$\\
 & & & & \\
\hline 
\end{tabular}
\end{center}
\caption{The different constants $\lambda$ such that $\IP(True)\sim
\frac{\lambda}{n}$ and $\IP(x)\sim \frac{\lambda}{n^2}$
when $n$ tends to infinity, depending on the studied model of trees.}
\label{tab:constants}
\end{figure}

Note that the simple $x$ trees we defined in Section~\ref{sec:literals} are exactly those trees obtained by expanding once a tree consisting of a single leaf $x$. Hence the proof of Proposition \ref{prop:sx} is immediate. 

We should also note that the relation between probability and complexity of a Boolean function holds for a fixed function $f$. It is not valid uniformly over \emph{all} Boolean functions.
Let us recall what the \emph{Shannon effect} is: If we choose a
Boolean function on $n$ variables \emph{uniformly} at random, asymptotically almost surely the function has a complexity which is exponential in $n$. In our models, we are still unable to compute the average complexity of a Boolean function.
Further work (similar to~\cite{GG10} on implicational logic) is required before (most probably) invalidating the Shannon effect for these non-uniform probability distributions.\\


\bibliographystyle{abbrv}
\bibliography{boolean}


\end{document}